\newtheorem{theorem}{Theorem}[section]
\newtheorem{corollary}[theorem]{Corollary}
\newtheorem{proposition}[theorem]{Proposition}
\newtheorem{conjecture}[theorem]{Conjecture}
\newtheorem{lemma}[theorem]{Lemma}
\newtheorem{problem}[theorem]{Problem}
\newtheorem*{lemma*}{Lemma}
\theoremstyle{definition}
\newcommand{\N}{\mathbb{N}}
\newcommand{\R}{\mathbb{R}}
\newcommand{\F}{\mathbb{F}}
\renewcommand{\t}{\mathfrak{t}}
\newcommand{\E}{\mathbb{E}}
\renewcommand{\epsilon}{\varepsilon}
\renewcommand{\emptyset}{\varnothing}
\newcommand{\mbf}[1]{\mathbf{#1}}
\DeclareMathOperator{\Stab}{Stab}
\DeclareMathOperator{\im}{im}
\DeclareMathOperator{\Tr}{Tr}
\DeclareMathOperator{\sgn}{sgn}
\newcommand*{\symdiff}{\,\scalebox{0.75}{$\bigtriangleup$}\,}
\newcommand{\Ind}{\big\uparrow}
\newcommand{\Res}{\big\downarrow}
\newcommand{\ind}{\!\!\uparrow}
\newcommand{\res}{\!\!\downarrow}
\renewcommand{\u}[1]{{}^{#1}\hspace*{-0.5pt}}
\renewcommand{\S}{\mathfrak{S}}
\newcommand{\A}{\mathfrak{A}}
\newcommand{\EE}{N} 
\renewcommand{\E}{C} 
\newcommand{\D}{F} 
\begin{document}

\title[The vertex of $S^{(kp-p,1^p)}$]{Sylow subgroups of symmetric and alternating groups and the
vertex \\ of $S^{(kp-p,1^p)}$ in characteristic $p$}

\author{Eugenio Giannelli}
\address[E. Giannelli]{Department of Mathematics, Royal Holloway, University of London, United Kingdom.}
\email{Eugenio.Giannelli.2011@live.rhul.ac.uk}

\author{Kay Jin Lim}
\address[K. J. Lim]{Division of Mathematical Sciences, Nanyang Technological University, SPMS-MAS-03-01, 21 Nanyang Link, Singapore 637371.}
\email{limkj@ntu.edu.sg}

\author{Mark Wildon}
\address[M. Wildon]{Department of Mathematics, Royal Holloway, University of London, United Kingdom.}
\email{mark.wildon@rhul.ac.uk}

\thanks{Part of this work was done while the third author was visiting the Institute for Mathematical Sciences, National University of Singapore in 2013. The visit was supported by the Institute.}
\thanks{The second author is supported by Singapore Ministry of Education AcRF Tier 1 grant RG13/14.}

\subjclass[2010]{Primary 20C30, Secondary 20B30, 20C20, 20D06, 20D20}
\keywords{Specht module, vertex, symmetric group, alternating group, Sylow subgroup}

\date{\today}

\begin{abstract} We show that the Sylow $p$-subgroups of a symmetric group, respectively an alternating group, are characterized as the $p$-subgroups containing all elementary abelian $p$-subgroups up to conjugacy of the symmetric group, respectively the alternating group. We apply the characterization result for symmetric groups to compute the vertices of the hook Specht modules associated to the partition $(kp-p,1^p)$ under the assumption that $k \equiv 1$ mod $p$ and $k \not\equiv 1$ mod $p^2$.
\end{abstract}

\maketitle
\thispagestyle{empty}

\section{Introduction}

In this paper we prove two main theorems. The first
gives a new characterization of the Sylow $p$-subgroups of the symmetric groups
$\S_n$ and alternating groups $\A_n$.

\begin{theorem}\label{thm:char}
Suppose that $G$ is either $\A_n$ or $\S_n$. Let $p$ be a prime and let
$Q \le G$ be a $p$-subgroup. Then $Q$ is a Sylow $p$-subgroup
of $G$ if and only if $Q$ contains a $G$-conjugate of every elementary abelian
$p$-subgroup of $G$.
\end{theorem}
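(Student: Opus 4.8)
The forward implication is immediate. If $Q$ is a Sylow $p$-subgroup of $G$ and $E\le G$ is elementary abelian, then $E$ lies in some Sylow $p$-subgroup $P$ of $G$; as all Sylow $p$-subgroups of $G$ are conjugate we may write $P=g^{-1}Qg$, so that $gEg^{-1}\le Q$ is a $G$-conjugate of $E$ inside $Q$. For the converse the plan is to argue the contrapositive: given a $p$-subgroup $Q\le G$ that is not Sylow, we must produce an elementary abelian $E\le G$ no $G$-conjugate of which lies in $Q$. Conjugating $Q$ affects neither its being Sylow nor whether it contains a conjugate of every elementary abelian subgroup, and $Q$ lies in some Sylow $p$-subgroup, so we may assume $Q\le P$ for a fixed Sylow $p$-subgroup $P$ of $G$; then $Q<P$. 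When $G=\A_n$ and $p$ is odd this reduces at once to $G=\S_n$: the Sylow $p$-subgroups, and the elementary abelian $p$-subgroups, of $\A_n$ and of $\S_n$ coincide (a $p$-element is an even permutation), and an elementary abelian $E\le\S_n$ with no $\S_n$-conjugate in $Q$ has no $\A_n$-conjugate in $Q$ either. The case $G=\A_n$, $p=2$ I would treat at the end by running the $\S_n$ argument below with each gadget replaced by an ``even'' variant (products of two transpositions or $p$-cycles for single ones, a regular $C_2^2$ on a $4$-block for a regular $C_2$ on a $2$-block, and so on); this should be routine, so I concentrate on $G=\S_n$.

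The first reduction is to take $Q$ of index $p$ in $P$: every proper subgroup of the $p$-group $P$ lies in one of index $p$, and any $\S_n$-conjugate of $E$ contained in $Q$ is contained in any overgroup of $Q$, so a witness for an index-$p$ overgroup of $Q$ is a witness for $Q$. Write $Q=\ker\phi$ for a surjective homomorphism $\phi\colon P\to C_p$; since $C_p$ is abelian, $\phi$ is constant on $P$-conjugacy classes. Hence it suffices, for each such $\phi$, to exhibit an elementary abelian $E\le\S_n$ with $\phi(E)\ne 1$ such that \emph{every} $\S_n$-conjugate of $E$ that lies in $P$ is already $P$-conjugate to $E$; for such an $E$, no $\S_n$-conjugate of $E$ lies in $\ker\phi=Q$.

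For the construction, recall that $P$ is the direct product, over the non-singleton $P$-orbits $\Delta$ on $\{1,\dots,n\}$, of iterated wreath products $W_\Delta\cong C_p\wr\cdots\wr C_p$ (with $i_\Delta$ factors) acting on the $p^{i_\Delta}$ points of $\Delta$; consequently $P^{\mathrm{ab}}\cong\bigoplus_\Delta\mathbb{F}_p^{\,i_\Delta}$, where the $d$-th coordinate of the $\Delta$-summand records the ``$d$-th layer'' of $W_\Delta$. Each $W_\Delta$ has, for every $k\le i_\Delta$, a standard partition of $\Delta$ into $p^{\,i_\Delta-k}$ blocks of size $p^k$; let $M^{(k)}\le\S_n$ be the direct product, taken over all $\Delta$ with $i_\Delta\ge k$ and over all these blocks $\beta$, of a copy of $C_p^{\,k}$ acting regularly on $\beta$. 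A direct computation --- using that a regular elementary abelian subgroup $C_p^{\,k}$ of $C_p\wr\cdots\wr C_p$ has image in the abelianization spanned by the top layer --- shows that the image of $M^{(k)}$ in $P^{\mathrm{ab}}$ is exactly the ``$k$-th layer slice'' $\bigoplus_{\Delta:\,i_\Delta\ge k}\mathbb{F}_p\,e^{(\Delta)}_k$. As $k$ varies, these slices decompose $P^{\mathrm{ab}}$, so since $\phi\ne 1$ there is a $k$ with $\phi(M^{(k)})\ne 1$; this $M^{(k)}$ will be the required $E$, \emph{provided} one shows that every $\S_n$-conjugate of $M^{(k)}$ contained in $P$ is $P$-conjugate to $M^{(k)}$.

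That conjugacy statement is the crux, and I expect it to absorb most of the work. It is the classification, up to $P$-conjugacy, of the elementary abelian subgroups of $P$ of the exact isomorphism type and orbit shape of $M^{(k)}$: isomorphic to $(C_p^{\,k})^{t}$ and acting with $t$ regular orbits of size $p^k$ (where, for $n=\sum a_ip^i$, one has $t=\sum_{i\ge k}a_ip^{\,i-k}$). Since such a subgroup moves only points in the ``big'' $P$-orbits, and a transitive abelian permutation group is regular, one first shows it meets each $P$-orbit $\Delta$ in a copy of $(C_p^{\,k})^{p^{\,i_\Delta-k}}$ acting with $p^{\,i_\Delta-k}$ regular orbits of size $p^k$; this reduces matters to a single iterated wreath product $W=C_p\wr\cdots\wr C_p$ on $p^i$ points, which I would handle by induction on $i$. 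The inputs there are: that $W$ has a unique system of imprimitivity of each $p$-power block size (so once the subgroup's orbit partition is seen to be a block system of $W$, it must be the standard one); that the ``layers'' of the subgroup split off compatibly with the wreath decomposition, the base case being that the maximal-rank elementary abelian subgroup of $\S_n$ --- a product of $\lfloor n/p\rfloor$ disjoint $p$-cycles --- is the unique elementary abelian subgroup of $P$ of its kind, namely the bottom layer of $P$; and that a regular elementary abelian $C_p^{\,k}\le C_p\wr\cdots\wr C_p$ ($k$ factors) is unique up to conjugacy, which comes down to Sylow theory in the affine group $\mathrm{AGL}_k(\mathbb{F}_p)=N_{\S_{p^k}}(C_p^{\,k})$. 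The genuinely delicate point is recognising the subgroup's orbit partition as a block system of $W$ --- equivalently, ruling out ``twisted'' elementary abelian subgroups whose regular orbits straddle two wreath factors; rank inequalities of the shape $\operatorname{rank}\le(\text{number of wreath factors})\times(\text{rank available in a single factor})$ are the tool for this. The remaining items --- conjugating $Q$ into $P$, the reduction to index-$p$ subgroups, the abelianization computations, and the adaptation to $\A_n$ in characteristic $2$ --- are comparatively routine.
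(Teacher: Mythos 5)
Your architecture is genuinely different from ours. We argue by induction on $n$: we force the subgroup $\E(m)$ generated by $m$ disjoint $p$-cycles into $Q$, prove it is normal there (Lemma 2.2), and pass to the faithful action of $Q/\E(m)$ on its orbits, using the Adem--Milgram classification of maximal elementary abelian subgroups. You instead dualize: reduce to $Q=\ker\phi$ of index $p$ in a Sylow subgroup $P$ and, for each nontrivial $\phi\colon P\to C_p$, produce an elementary abelian witness that is weakly closed enough in $P$ to be forced out of $\ker\phi$. The reduction to index $p$, the identification $P^{\mathrm{ab}}\cong\bigoplus_\Delta \F_p^{\,i_\Delta}$, and the computation that a regular $C_p^{\,k}$ inside the $k$-fold iterated wreath product has image equal to the top layer of its abelianization are all correct, so your slices do span $P^{\mathrm{ab}}$ and some $M^{(k)}$ satisfies $\phi(M^{(k)})\ne 1$.

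The gap is where you put it, but it is not closed by your sketch. Your plan needs (a) that any direct product of regular $C_p^{\,k}$'s on disjoint $p^k$-sets lying in $P$ has the standard level-$k$ blocks as its orbits, and (b) that all regular elementary abelian subgroups of a Sylow $p$-subgroup $W$ of $\S_{p^k}$ are conjugate \emph{in $W$}. Fact (a) does follow by the induction you indicate: a nontrivial element of one factor fixes every point outside its orbit, hence fixes setwise every level-$1$ block it does not wholly move, which forces each orbit to be a union of level-$1$ blocks, and one passes to the quotient action. But (b) is the real content, and ``Sylow theory in $\mathrm{AGL}_k(\F_p)$'' does not deliver it: the natural Sylow argument inside $N_{\S_{p^k}}(E)=\mathrm{AGL}_k(\F_p)$ requires first knowing that $N_W(E)$ is a Sylow $p$-subgroup of $\mathrm{AGL}_k(\F_p)$ for \emph{every} regular $E\le W$ (not just the translation group), which is essentially equivalent to (b); and even then it only compares Sylow subgroups of $\S_{p^k}$ containing $E$, not conjugates of $E$ inside one fixed $W$. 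Statement (b) is true, but the only proofs we see run an induction through the wreath-product layers of the same depth as our Section 2 -- so this step is the core of the theorem, not a routine appendix. Separately, the case $G=\A_n$, $p=2$ is not obtained by ``even variants'': the maximal elementary abelian $2$-subgroups of $\A_n$ behave irregularly (note the exclusion $m_1\ne 2$ and the subgroups with two fixed points in Section 3), and each weak-closure statement would have to be re-proved there.

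One remark that would let you bypass (b) entirely: you do not need $P$-conjugacy of the witnesses, only that every $\S_n$-conjugate of $M^{(k)}$ contained in $P$ has the \emph{same image in $P^{\mathrm{ab}}$}, namely the full $k$-th layer slice. Granted (a), this reduces to showing that every regular $C_p^{\,k}\le W$ has image $\F_p e_k$ in $W^{\mathrm{ab}}$, and that admits a short induction: writing $g=(h_1,\ldots,h_p;\sigma)\in E$ with $\sigma$ a $p$-cycle on the top blocks, the relation $g^p=1$ forces $\sum_j [h_j]=0$ in the abelianization of the base factor, while $E\cap(\text{base})$ contributes $p$ equal terms, hence $0$. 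Reworking Step 3 of your plan along these lines would turn the sketch into a proof.
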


It is easily seen that this characterization of Sylow subgroups
does not hold in a general finite group.
When $n$ is
large compared to $p$, the Sylow $p$-subgroups of $\S_n$ and
$\A_n$ are much  larger than the elementary abelian $p$-groups they contain.
For example, if $n$ is a power of $2$ then a
Sylow $2$-subgroup of $\S_{n}$ has order~$2^{n-1}$, whereas the order of the
largest elementary abelian $2$-subgroup
of~$\S_{n}$ is $2^{n/2}$.
This gives another indication that Theorem~\ref{thm:char} is a non-trivial result.
We state some open problems suggested by this theorem in the final section of the paper.

Theorem~\ref{thm:char} is motivated by an application to vertices of indecomposable modules
for the symmetric group.  We recall the key definitions.
Let $F$ be a field of prime characteristic $p$, let $G$
be a finite group and let $M$ be an indecomposable $FG$-module. A subgroup
$Q$ of $G$ is said to be a \emph{vertex} of $M$ if there exists a $FQ$-module
$U$ such that $V$ is a direct summand of $U\ind^G$
and~$Q$ is minimal with this property.
This definition was introduced by Green in~\cite{Green59},
where he showed that the vertices of $M$ form a single conjugacy class of $p$-subgroups of $G$. We note that the vertices of $M$ do not change under field extensions of $F$ \cite[Theorem 1.21]{HuppertBlackburn}. Vertices play a key role in the Green correspondence \cite{Green63}, and
in Alperin's Weight Conjecture \cite{AlperinWC}. But despite the importance
of vertices to local representation theory, there are relatively few cases where
the vertices of particular families of modules have been computed.
Our second main theorem is as follows.


%

\begin{theorem}\label{thm:vertex}
Let $p$ be an odd prime. Suppose that $k \equiv 1$ mod $p$ and that
$k \not\equiv 1$ mod $p^2$.
The Specht module $S^{(kp-p,1^p)}$, defined over a field of
prime characteristic $p$, has a Sylow $p$-subgroup
of~$\S_{kp}$ as a vertex.
\end{theorem}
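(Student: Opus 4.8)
The strategy is to show that $S^{(kp-p,1^p)}$ has a source that restricts ``visibly'' to a Sylow subgroup, by combining three ingredients: the branching rule for Specht modules, the characterization of Sylow subgroups in Theorem~\ref{thm:char}, and a reduction to the relevant block. First I would recall that the vertices of an indecomposable $F\S_n$-module can be bounded below by restricting to elementary abelian $p$-subgroups: if $M$ is indecomposable and $E$ is an elementary abelian $p$-subgroup such that $M\!\!\downarrow_E$ has a summand with vertex $E$, then (by Green correspondence / the theory of vertices) some vertex of $M$ contains a $G$-conjugate of $E$. The key point is that Theorem~\ref{thm:char} then lets us \emph{upgrade} a collection of such lower bounds --- one for every conjugacy class of elementary abelian $p$-subgroup of $\S_{kp}$ --- into the conclusion that the vertex is a full Sylow $p$-subgroup. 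So the whole problem reduces to: for every elementary abelian $p$-subgroup $E \le \S_{kp}$, show that the vertex of $S^{(kp-p,1^p)}$ contains a conjugate of $E$.

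\textbf{Reducing to the largest elementary abelian subgroups.} It suffices to treat the maximal elementary abelian $p$-subgroups, and in $\S_{kp}$ these are built from regular elementary abelian subgroups of $\S_{p^i}$ acting on $p^i$ points, taken in a direct product decomposition according to a partition of $kp$ into powers of $p$. The hypotheses $k\equiv 1\pmod p$ and $k\not\equiv 1\pmod{p^2}$ are exactly what is needed to control the $p$-adic expansion of $kp$: we have $kp = p + (\text{multiple of }p^2)$ with the coefficient of $p^2$ not divisible by $p$, so the relevant elementary abelian subgroups look like $C_p \times (\text{large elementary abelian pieces on } p^2, p^3, \dots \text{ points})$, and crucially the ``$C_p$ on $p$ points'' factor is forced to be present. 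I would use the Littlewood--Richardson / branching behaviour of $S^{(kp-p,1^p)}$ on restriction to a Young subgroup $\S_p \times \S_{kp-p}$, or better to a wreath-product-shaped subgroup, to locate a summand on which the putative vertex acts with full rank. Concretely, restricting the Specht module along $\S_{p} \times \S_{(k-1)p}$ and using the known decomposition of $S^{(kp-p,1^p)}\!\!\downarrow$ (the hook Specht module has a two-step filtration by $S^{(p-1,1)} \boxtimes S^{((k-1)p-1, 1^{p-1})}$-type factors plus $S^{(p)}\boxtimes S^{((k-1)p-p,1^p)}$ etc.), I would peel off the $p$ points one factor at a time and induct on $k$.

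\textbf{The inductive step and the core computation.} The base case is essentially the module $S^{(p-1,1^p)}$ for $\S_{2p-1}$ or a small relative of it, where one checks directly --- e.g.\ via the explicit description of the Specht module restricted to a regular elementary abelian $p$-subgroup, or via Brauer characters --- that the restriction to the relevant elementary abelian group has a summand of full vertex; this is where the hook shape $(kp-p,1^p)$ and the precise condition on $k$ mod $p^2$ are used to guarantee the restriction is not projective-free in the wrong way. For the inductive step I would show that if the vertex of $S^{((k-1)p-p,1^p)}$ in $\S_{(k-1)p}$ contains a conjugate of every elementary abelian subgroup of $\S_{(k-1)p}$ --- i.e.\ by Theorem~\ref{thm:char}, is Sylow --- then the same holds for $S^{(kp-p,1^p)}$ in $\S_{kp}$, by tensoring up through the wreath-product factor and using that the wreath product of Sylow subgroups is Sylow in the larger symmetric group.

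\textbf{Main obstacle.} The hard part is the core computation controlling the restriction of the hook Specht module to a \emph{regular} elementary abelian $p$-subgroup $E \cong C_p^i$ acting on $p^i$ points: one must show this restriction has an indecomposable summand whose vertex is all of $E$ (equivalently, is not relatively $E'$-projective for any proper $E' < E$). This is delicate because hook Specht modules in characteristic $p$ can be non-projective and even decomposable on restriction, and the Cartan-matrix / block-theoretic bookkeeping must be done carefully; the congruence conditions on $k$ are precisely what make the relevant Specht or Young module land in a block whose defect group is large enough. I expect to handle this via a combination of the known Brauer character of $S^{(kp-p,1^p)}$ restricted to $p$-regular classes in $\S_{p^i}$ together with a rank-variety or complexity argument (the variety of the restricted module to $E$ is all of $E$ iff the vertex contains $E$), falling back on an explicit matrix computation in the smallest relevant case.
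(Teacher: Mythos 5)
Your opening move is exactly right and matches the paper: by Theorem~\ref{thm:char} it suffices to show that a vertex of $S^{(kp-p,1^p)}$ contains an $\S_{kp}$-conjugate of every maximal elementary abelian $p$-subgroup $E(m_1,\ldots,m_r)$, and the standard Mackey argument you quote (a summand of $M\res_E$ with vertex $E$ forces $E$ into a vertex of $M$) is a legitimate detection tool. But everything after that is a plan rather than a proof, and the plan has concrete problems. The paper's route is a sharp dichotomy that you do not reach: writing $kp=m_1p+m_2p^2+\cdots$, either $m_2\neq 0$, so $E$ has an orbit of size $p^2$ and one shows $W(Q)\neq 0$ for an explicit fixed vector $w$ in the Brauer quotient at a group $Q\supseteq E$ containing $\langle\alpha,\beta\rangle$ (Sections~\ref{sec:Brauer} and~\ref{sec:proofelemii}); or $m_2=0$, in which case $k\equiv m_1$ mod $p^2$ forces $m_1\ge 2$, i.e.\ at least two orbits of size $p$, and one computes the stable generic Jordan type of $\bigwedge^p S^{(kp-1,1)}\res_E$ via the long exact sequence of exterior powers of the natural module (Sections~\ref{sec:gJt} and~\ref{sec:proofelemi}). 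Your "core computation" is precisely this content, and you leave it as an expectation.

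Two of the tools you propose to fall back on are known to fail here, which is why the case split is essential. First, the equivalence you assert, that the (rank) variety of $M\res_E$ is all of $E$ if and only if a vertex contains $E$, is false in the direction you need it to be an obstruction: the paper notes that $S^{(9,1^3)}\res_{E(1,1)}$ is generically free even though the vertex of $S^{(9,1^3)}$ is a full Sylow $3$-subgroup of $\S_{12}$; only the implication "not generically free $\Rightarrow$ vertex contains $E$" (Proposition~\ref{prop:sgJt}(iv)) holds, and it does not detect all the required subgroups. Second, the Brauer correspondence also misses subgroups: $(\bigwedge^3S^{(26,1)})(E)=0$ for the regular $E=E(0,0,1)\le\S_{27}$ even though that module has full vertex. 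So no single uniform method works, and your inductive/branching scheme is also not set up to work: restriction of a hook Specht module to a Young subgroup gives a filtration (not a direct sum decomposition) in characteristic $p$, restrictions to elementary abelian $p$-groups can be decomposable for odd $p$ (the paper's $S^{(8,1,1)}$ example, which is what blocks the Murphy--Peel strategy), and it is not explained how "peeling off $p$ points" would transport an indecomposable summand with full vertex up from $\S_{(k-1)p}$ to $\S_{kp}$. As it stands the argument has a genuine gap at its centre.
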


We prove Theorem~\ref{thm:vertex}
as a corollary of the following result, which is
of independent interest.

\begin{theorem}\label{thm:elem}
Let $p$ be an odd prime
and let $E$ be an elementary abelian $p$-subgroup of $\S_{kp}$. If either
\begin{enumerate}
\item[(i)] $E$ has at least two orbits of size $p$ on $\{1,\ldots, kp\}$, or
\item[(ii)] $E$ has at least one orbit of size $p^2$ on $\{1,\ldots,kp\}$,
\end{enumerate}
then $E$ is contained in a vertex of the Specht module $S^{(kp-p,1^p)}$,
defined over an algebraically closed field of characteristic $p$.
\end{theorem}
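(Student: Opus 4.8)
Write $S = S^{(kp-p,1^p)}$ over the algebraically closed field $F$ of characteristic $p$. The plan is to reduce the statement to the claim that $S{\downarrow}_E$ has the trivial module $F_E$ as a direct summand. This reduction is legitimate: the trivial $FE$-module has vertex $E$, since it is not relatively $H$-projective for any proper subgroup $H$ of $E$; and if the restriction of an indecomposable $FG$-module to a $p$-subgroup $E$ has an indecomposable summand with vertex $E$, then, writing the module as a direct summand of a module induced from a source and applying Mackey's formula, one finds that $E$ lies inside one of its vertices. (We use here that $S$ is indecomposable for $p$ odd, so that its vertex is well-defined; this is one place the hypothesis that $p$ is odd enters.)

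To exhibit a trivial summand of $S{\downarrow}_E$ I would combine two ingredients. The first is that the restriction of a Specht module to a Young subgroup has a filtration whose sections are outer tensor products $S^{\mu_1}\boxtimes\cdots\boxtimes S^{\mu_r}$ of Specht modules, controlled by the Littlewood--Richardson rule applied to the hook $(kp-p,1^p)$: removing a vertical strip of length $p$ (the whole leg) contributes a $\sgn$ factor, and removing horizontal strips from the first row contributes trivial factors. The second is that, since $p$ is odd, a $p$-cycle is an even permutation, so $\sgn$ restricts to the \emph{trivial} module on any subgroup generated by $p$-cycles, in particular on any elementary abelian $p$-subgroup; the same remark makes the sign alternation in a column stabiliser act trivially. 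So I would conjugate $E$ so that its non-singleton orbits lie in a small Young factor --- $\S_{2p}$ (or $\S_p\times\S_p$) in case~(i), $\S_{p^2}$ in case~(ii) --- restrict $S$ to the corresponding Young subgroup, and select a section of the Specht filtration whose tensor factors on the small blocks are trivial or sign modules and whose remaining factor $S^\nu$, a hook Specht module for a smaller symmetric group, is either acted on trivially by $E$ (when $E$ is supported entirely inside the small blocks) or, by induction on the number of points, itself has a trivial summand on restriction to the part of $E$ acting on it. Such a section restricts over $E$ to a direct sum of copies of $F_E$. The genuinely small configurations (small $k$, and $E$ a ``diagonal'' cyclic group not splitting across the Young factors) I would treat by a direct computation with standard polytabloids, again using that the relevant column alternations have sign $+1$.

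The main obstacle is the step from ``some section of a Specht filtration of the restriction of $S$ to a Young subgroup restricts to a trivial $FE$-module'' to ``$S{\downarrow}_E$ itself has a trivial direct summand'': Specht filtrations need not split, and a trivial module need not split off a section. To close this gap I would make the fixed vector explicit, writing down in the polytabloid basis an $E$-invariant element $v\in S$ lying in the image of the chosen section --- typically an orbit sum of standard polytabloids whose first columns are assembled from $E$-orbits --- and then exhibiting an $E$-invariant linear functional $f$ on $S$ with $f(v)\neq 0$, which forces $Fv$ to be a direct summand. A natural candidate for $f$ is the pairing of $v$ against a corresponding fixed vector in the dual Specht module under James's bilinear form; one must check that this pairing is not annihilated by the characteristic. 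Verifying $f(v)\neq 0$ uniformly in $k$ and for every admissible $E$ --- especially for diagonal $E$ in case~(i) and for the rank-two regular action in case~(ii) --- is the technical heart of the argument, and I expect it to be the hardest step, since the analogous pairings genuinely vanish in characteristic $p$ for some partitions (for instance for $\sgn = S^{(1^p)}$).
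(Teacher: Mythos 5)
Your reduction in the first paragraph is sound as far as it goes: if $F_E$ were a direct summand of $S^{(kp-p,1^p)}\res_E$, a Mackey argument would place $E$ inside a vertex. The fatal problem is that the premise is false in general under hypothesis~(ii), so the strategy cannot succeed. Take $p=3$, $k=4$ and $E=E(1,1)\le\S_{12}$, which has one orbit of size $3$ and one of size $9$ and so satisfies~(ii). As recorded in Section~\ref{sec:problems}, $S^{(9,1^3)}\res_{E(1,1)}$ is \emph{generically free}; since the trivial module has generic Jordan type $[1]$ and generic Jordan type is additive over direct sums (Proposition~\ref{prop:sgJt}(i)), a generically free module cannot have a trivial direct summand. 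Hence no choice of fixed vector $v$ and invariant functional $f$ with $f(v)\neq 0$ exists for this $E$, and the ``technical heart'' you defer is not merely hard but impossible. This is precisely why the paper does not argue at the level of $E$ itself in case~(ii): it passes to a strictly larger, non-abelian $p$-subgroup $Q=\langle\alpha,\beta\rangle\times T$ containing a conjugate of $E$ and shows $S^{(kp-p,1^p)}(Q)\neq 0$ via the Brauer quotient, which then forces $Q$ (hence $E$) into a vertex. Any correct argument for~(ii) must similarly leave the category of $FE$-modules.

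Two further points. First, in case~(i) the paper establishes only that $S^{(kp-p,1^p)}\res_E$ is not generically free (via the long exact sequence of exterior powers of the natural module), which is strictly weaker than exhibiting a trivial summand; so even there your claim is unproved and would need genuinely new input. Second, the gap you yourself flag --- that a trivial section of a Littlewood--Richardson filtration of $S\res_H$ for a Young subgroup $H$ need not split off, even after restricting further to $E$ --- is real, and your proposed repair is exactly the step that the counterexample above rules out. The correct conclusion to draw from the paper's example is that neither ``trivial summand on restriction to $E$'' nor ``nonzero Brauer quotient at $E$'' nor ``non-generic-freeness at $E$'' is a necessary condition for $E$ to lie in a vertex, which is why the two cases of the theorem require two different and rather indirect techniques.
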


The proof of Theorem~\ref{thm:elem} uses
two key techniques:
generic Jordan type for
modules over elementary abelian $p$-groups (see \cite{Friedlander-Pevtsova-Suslin,Wheeler}),
and the Brauer correspondence for modules,
as developed by Brou{\'e} in \cite{BrouePPerm}.
The isomorphism $S^{(kp-p,1^p)} \cong \bigwedge^p S^{(kp-1,1)}$ (see Section~\ref{sec:hook})
is used throughout the calculations.


\subsection*{Background}
We now survey the existing results on vertices of Specht modules, emphasising the different techniques
that have been used.
We begin with results that, like Theorem~\ref{thm:vertex}, apply to Specht modules
labelled by partitions of a special form. Note that,
by \cite[Corollary 13.18]{James}, Specht modules are always indecomposable
in odd characteristic.

The simplest case is a Specht module $S^\lambda$ in a block of abelian defect. 
In this case $S^\lambda$ has dimension coprime to $p$ and by \cite[Corollary 1]{Green59}, its vertices
are the defect groups of its block; these are elementary abelian.

In \cite[Theorem 2]{WildonCycSpechtVertices} the
third author used the Brauer correspondence to show that when $p$ does
not divide $n$ and $S^{(n-r,1^r)}$ is indecomposable, its vertex is a Sylow
$p$-subgroup of $\S_{n-r-1}\times \S_r$.
When $p=2$ and $n$ is even it was shown by
Murphy and Peel in \cite[Theorem~4.5]{MurphyPeel} that
$S^{(n-r,1^r)}$ is indecomposable and has
a Sylow $2$-subgroup of $\S_n$ as a vertex. A key step in their proof is
their Theorem 4.3, which states that
if $Q$ is an elementary abelian $2$-subgroup of $\S_n$ of rank $n/2$ then $S^{(n-r,1^r)}\res_{\hspace*{1pt}Q}$
is indecomposable. 
The analogue of this result for odd primes is in general false:
for example, when $p = 5$,
calculations with the computer algebra package {\sc Magma} \cite{Magma}
show that if $Q = \langle (1,2,3,4,5), (6,7,8,9,10) \rangle$ then
$S^{(8,1,1)}\res_{\hspace*{1pt}Q}$ is decomposable. It therefore seems
impossible to extend the methods of \cite{MurphyPeel} to the case when $p$ is odd and $p$
divides $n$.

The dimension of $S^{(kp-r,1^r)}$ is $\binom{kp-1}{r}$, which is
coprime to $p$ whenever $r < p$. In these cases $S^{(kp-r,1^r)}$ has a full Sylow
$p$-subgroup of $\S_{kp}$ as a vertex again by \cite[Corollary 1]{Green59}. When $r=p$
the dimension is divisible by $p$ if and only if $k \equiv 1$ mod $p$,
and divisible by $p^2$ if and only if $k \equiv 1$ mod $p^2$.
Theorem~\ref{thm:vertex} therefore deals with the smallest case that
cannot be decided solely by dimension arguments.

When $p=2$ the Specht module $S^{(n-2,2)}$ is always indecomposable. When~$n$ is even
or  $n \equiv 1$ mod $4$, it was shown in \cite[Theorem 1]{DanzErdmann} that $S^{(n-2,2)}$
has a Sylow $2$-subgroup of $\S_n$ as a vertex, except when $n=4$, when the unique vertex
of $S^{(2,2)}$ is the normal Sylow $2$-subgroup of $\A_4$. When $n \equiv 3$ mod $4$,
any Sylow $2$-subgroup of $\S_{n-5} \times \S_2 \times \S_2$ is a vertex.
When $p \ge 3$ the third author showed in \cite[Theorem 8.1]{WildonBound}
that $S^{(n-2,2)}$ has a Sylow $p$-subgroup
of the defect group of its block as a vertex.

In the main result of \cite{WildonBound} the third author used the Brauer correspondence to determine a subgroup
contained in the vertex of each indecomposable Specht module. This result was later
improved by the first author in \cite{GiannelliBound}.
In \cite{LimAbelian} the second author used  the complexity
theory of modules to prove a number of classification theorems
on Specht modules with abelian vertices. In particular it was shown that if $p \ge 3$ and $S^\lambda$
has an abelian vertex of $p$-rank $m$ then $\lambda$ is a $p^2$-core of $p$-weight $m$, and
that $S^{(p^p)}$ has a Sylow $p$-subgroup of $\S_{p^2}$ as a vertex.

In \cite{HemmerIrreducible} Hemmer proved that, when $p \ge 3$, a Specht module is simple
if and only if it is a signed Young module, as defined by Donkin \cite[page 651]{Donkin}.
By \cite[(3) page 675]{Donkin}, it follows that the vertices of simple Specht modules in odd characteristic
are 
Sylow $p$-subgroups of Young subgroups of symmetric groups.
For example, when $n$ is not divisible by $p$, the Specht module $S^{(n-r,1^r)}$ is isomorphic to the
signed Young module \hbox{$Y( (n-r,1^{r-ps})| p(s) )$} where $\lfloor r/p \rfloor = s$,
and so its vertex may also be found using Donkin's result. In general it is not
possible to apply Donkin's result in this way
because Hemmer's proof does not explicitly determine a pair of partitions labelling
a signed Young module isomorphic to a given simple Specht module.

Finally we note that there has also been considerable work on the vertices of simple
modules for symmetric groups.
We refer the reader to \cite{DanzKulshammerSurvey} for a survey.

\subsection*{Outline}
In Section~\ref{sec:charS} we prove Theorem~\ref{thm:char} for symmetric groups. Then
in Section~\ref{sec:charA} we use this result and further arguments to prove Theorem~\ref{thm:char} for alternating groups.
In Section~\ref{sec:elemtovertex} we give a short proof, using Theorem~\ref{thm:char},
that Theorem~\ref{thm:elem} implies Theorem~\ref{thm:vertex}.

We then turn to the proof of Theorem~\ref{thm:elem}.
In Section~\ref{sec:hook} we collect the necessary background results on hook-Specht modules.
We review the Brauer correspondence in Section~\ref{sec:Brauer} and give a useful
general result on the Brauer homomorphism for monomial modules. The notion of generic
Jordan type is presented in Section~\ref{sec:gJt}. Then in Sections~\ref{sec:proofelemi}
and~\ref{sec:proofelemii} we prove Theorem~\ref{thm:elem}, under hypotheses~(i) and~(ii),
respectively.

We end in Section~\ref{sec:problems} by stating some open problems related to our three main theorems.

\section{Proof of Theorem~\ref{thm:char} for symmetric groups}
\label{sec:charS}

For background on Sylow subgroups
of symmetric groups and their construction as product of iterated wreath products, we
refer the reader to \cite[(4.1.20)]{JK}.

We need the following notation. Let $\S_A$ denote the symmetric group on a finite set $A$.
For each $A \subset \N$ such that $A$ has size $p^d$ where $d \in \N$,
let $E_A$ denote a fixed elementary abelian subgroup of $\S_A$ of order $p^d$. All such subgroups
are permutation isomorphic to the regular representation of an elementary
abelian group of order $p^d$, and so are conjugate in $\S_A$. We highlight that the precise choice of $E_A$ is irrelevant.


The main work in the proof of Theorem~\ref{thm:char} comes in the case when $n=mp$ for some $m \in \N$. Suppose that
\begin{equation}\label{eq:dec}
n = m_1 p + m_2 p^2 + \cdots + m_r p^r
\end{equation}
where $m_i \in \N_0$ for each $i$ and $m_r \not= 0$. Note that this is not necessarily
the $p$-adic expansion of $n$. For $i \in \{1,\ldots,r\}$ and each $j$
such that $1 \le j \le m_i$, let 
\[ z^i_j = (j-1)p^i  + \sum_{s=1}^{i-1} m_sp^s \]
and let 
$B^i_j = \{ z^i_j + 1, \ldots, z^i_j + p^i\}$.
Note that $|B^i_j| = p^i$ for all $j$ such that $1 \le j \le m_i$. 
If $m_i = 0$ then $B^i_j = \varnothing$ and $E_{B^i_j}$ is the trivial group.
For each expression for $n$ as in~\eqref{eq:dec}, define
an elementary abelian $p$-subgroup of $\S_n$ by
\[ E(m_1,\ldots, m_r) = \prod_{i=1}^r \prod_{j=1}^{m_i} E_{B^i_j}. \]

\begin{proposition}[{\cite[1.3 Chapter VI]{Adem-Milgram}}]\label{prop:max}
The $p$-subgroups $E(m_1,\ldots,m_r)$ where $m_1, \ldots, m_r$ satisfy \eqref{eq:dec}
form a complete irredundant set of maximal elementary abelian $p$-subgroups of $\S_{mp}$ up to $\S_{mp}$-conjugacy.
\end{proposition}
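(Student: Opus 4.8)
The plan is to classify the maximal elementary abelian $p$-subgroups of $\S_{mp}$ by their orbit structure on $\{1,\dots,mp\}$ and then read off the proposition. First I would record the general shape of an elementary abelian $p$-subgroup $F\le\S_{mp}$. As $F$ is abelian, in any transitive action of $F$ the point stabiliser is normal and hence equal to the kernel of that action, so $F$ acts on each of its orbits as a regular quotient; in particular every $F$-orbit has $p$-power size. Writing the orbits as $O_1,\dots,O_t$ with $|O_i|=p^{a_i}$, the group $F$ is contained in the Young subgroup $\S_{O_1}\times\dots\times\S_{O_t}$ and projects onto a regular elementary abelian subgroup $R_i\le\S_{O_i}$ of order $p^{a_i}$; hence $F\le R_1\times\dots\times R_t$, and this overgroup is again elementary abelian. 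So a maximal $F$ satisfies $F=R_1\times\dots\times R_t$. A maximal $F$ also has no fixed points: the number of fixed points is $mp$ minus a sum of positive $p$-powers and so is divisible by $p$; were it nonzero it would be at least $p$, and adjoining a regular subgroup of order $p$ on $p$ of the fixed points would strictly enlarge $F$.

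The heart of the matter is the converse: whenever $O_1,\dots,O_t$ partitions $\{1,\dots,mp\}$ into parts of $p$-power size at least $p$ and each $R_i\le\S_{O_i}$ is regular elementary abelian, the group $F=R_1\times\dots\times R_t$ is maximal. To prove this it suffices to take a maximal elementary abelian $F'$ with $F\le F'$ and show $F=F'$. By the previous paragraph $F'=R'_1\times\dots\times R'_s$ over its orbits $O'_1,\dots,O'_s$, and since $F\le F'$ each $O'_j$ is a union of $F$-orbits, say $O'_j=O_{i_1}\cup\dots\cup O_{i_\ell}$, with $R_{i_1}\times\dots\times R_{i_\ell}\le R'_j$. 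Comparing orders gives $\prod_k p^{a_{i_k}}\le|R'_j|=|O'_j|=\sum_k p^{a_{i_k}}$. But a product of integers each at least $2$ is always at least their sum, with equality only in the case $2\cdot2=2+2$; hence either $\ell=1$, or $p=2$ and $\ell=2$ with both parts of size $2$. In the latter case $R_{i_1}\times R_{i_2}$ and $R'_j$ both have order $4$ and so coincide --- which is absurd, since $R'_j$ is transitive on the four points of $O'_j$ while $R_{i_1}\times R_{i_2}$ is not. So $\ell=1$ for every $j$, meaning $F'$ has exactly the orbits $O_1,\dots,O_t$; then, matching orbits, each $R_i$ lies in a regular subgroup of $\S_{O_i}$ of the same order, so it equals it and $F'=F$. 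I expect this order comparison, with the isolation of the $p=2$ exception, to be the only genuine obstacle.

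It remains to assemble the classification. By the above, the maximal elementary abelian $p$-subgroups of $\S_{mp}$ are exactly the products $R_1\times\dots\times R_t$ coming from partitions of $\{1,\dots,mp\}$ into parts of $p$-power size at least $p$. Conjugating in $\S_{mp}$ we may move the parts onto the standard blocks $B^i_j$ and then, inside each $\S_{B^i_j}$, replace the corresponding factor by the fixed group $E_{B^i_j}$ --- all regular elementary abelian subgroups of $\S_{B^i_j}$ being $\S_{B^i_j}$-conjugate. Thus every maximal elementary abelian $p$-subgroup is $\S_{mp}$-conjugate to some $E(m_1,\dots,m_r)$, where $m_i$ is the number of parts of size $p^i$; these satisfy $m_1p+\dots+m_rp^r=mp$, which is precisely \eqref{eq:dec}. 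Conversely every tuple satisfying \eqref{eq:dec} arises in this way, and two of the groups $E(m_1,\dots,m_r)$ can be conjugate in $\S_{mp}$ only if they have the same multiset of orbit sizes, i.e.\ only if the tuples agree (using $m_r\ne0$). Hence the displayed groups form a complete irredundant set of representatives.
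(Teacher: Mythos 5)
Your proof is correct. Note that the paper does not prove this proposition at all: it is quoted from Adem--Milgram \cite[1.3 Chapter VI]{Adem-Milgram}, so there is no internal argument to compare against. Your argument is the standard one and is complete: the reduction of an arbitrary elementary abelian subgroup to a product of regular groups $R_i$ on its orbits, the observation that a maximal such subgroup has no fixed points (using $p \mid mp$), and the key order comparison $\prod_k p^{a_{i_k}} \le \sum_k p^{a_{i_k}}$ forcing $\ell = 1$, with the single exceptional case $2\cdot 2 = 2+2$ correctly eliminated by transitivity of $R'_j$ versus intransitivity of $R_{i_1}\times R_{i_2}$. The final conjugacy and irredundancy step matches the paper's remark that all regular elementary abelian subgroups of $\S_A$ with $|A|=p^d$ are $\S_A$-conjugate. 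The only thing your argument does not address is the paper's stronger side remark (left there as an exercise) that each $E(m_1,\ldots,m_r)$ is maximal even among all abelian subgroups, but that is not part of the stated proposition.
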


In fact each $E(m_1,\ldots,m_r)$ is maximal even as an abelian subgroup of~$\S_{mp}$; we leave this remark
as an exercise to the reader.


The subgroup $E(m)$ generated by $m$ disjoint $p$-cycles is critical to our argument.
To emphasise the role played by this subgroup, we denote it by $\E(m)$ below.
Informally stated, the next lemma says that $\E(m)$ is the normal base group in any
Sylow $p$-subgroup that contains it.

\begin{lemma}\label{lemma:Enormal}
If $Q$ is a $p$-subgroup of $\S_{mp}$ containing $\E(m)$ then $\E(m) \unlhd Q$. Moreover $Q/\E(m)$
acts faithfully on the orbits of $\E(m)$.
\end{lemma}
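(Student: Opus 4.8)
The plan is to work inside $\S_{mp}$ with the subgroup $\E(m) = \langle \sigma_1, \ldots, \sigma_m \rangle$, where $\sigma_j$ is the $p$-cycle on the block $\{(j-1)p+1, \ldots, jp\}$. The key structural fact I would use is that $\E(m)$ is its own centralizer \emph{modulo} the pointwise stabilizers of its orbits: more precisely, the orbits of $\E(m)$ on $\{1, \ldots, mp\}$ are exactly the $m$ blocks $\Omega_j = \{(j-1)p+1, \ldots, jp\}$, each of size $p$, and the restriction of $\sigma_j$ to $\Omega_j$ generates a \emph{self-centralizing} cyclic subgroup of $\S_{\Omega_j}$ (a regular cyclic group of prime order is self-centralizing in the symmetric group on its points). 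This is the engine behind both assertions.

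\textbf{Normality of $\E(m)$ in $Q$.}
Let $Q$ be a $p$-subgroup of $\S_{mp}$ with $\E(m) \le Q$. First I would show that every $g \in Q$ permutes the set of orbits $\{\Omega_1, \ldots, \Omega_m\}$: indeed $g$ permutes the $\E(m)$-orbits among themselves according to the rule $g(\Omega) = g\E(m)g^{-1}$-orbit, but since $\E(m) \le Q$ is not a priori normal I instead argue directly — each $\Omega_j$ is a block of imprimitivity for $\langle \E(m), g\rangle$ is too strong, so the cleanest route is: the $\Omega_j$ are precisely the orbits of $\E(m)$, hence $g$ sends them to the orbits of $g\E(m)g^{-1} \le Q$. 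Now here is the point where I must rule out mixing: since all $\Omega_j$ have the same size $p$, it is not automatic that $g$ stabilizes each $\Omega_j$. So instead I would prove normality as follows. Consider the natural map $\pi \colon N_{\S_{mp}}(\E(m)) \to \operatorname{Sym}(\{\Omega_1,\ldots,\Omega_m\}) \cong \S_m$ recording the permutation of the orbits; its kernel $K$ consists of elements stabilizing each $\Omega_j$ setwise and normalizing $\E(m)$, and one checks $K = \prod_j N_{\S_{\Omega_j}}(\langle\sigma_j|_{\Omega_j}\rangle)$, a group whose order is $(p(p-1))^m \cdot (\text{nothing else})$ — wait, I need $g\in Q$ to lie in $N(\E(m))$ in the first place, which is what I am trying to prove.

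\textbf{Correct approach.}
The honest argument: let $g \in Q$. Then $g$ permutes $\{\Omega_1,\ldots,\Omega_m\}$ (orbits of $\E(m)$ go to orbits of $g\E(m)g^{-1}$, and I will show momentarily these coincide). Since $Q$ is a $p$-group and $|{\S_m}|_p$-considerations apply, the induced permutation $\bar g$ on the $m$ orbits has $p$-power order. Pick any $g \in Q$ and let $H = \langle \E(m), g \rangle \le Q$, a $p$-group; then $H$ has orbits on $\{1,\ldots,mp\}$ that are unions of the $\Omega_j$, say of sizes $p\ell_1, \ldots$, and on each such $H$-orbit of size $p\ell$ the image of $H$ in the symmetric group is a transitive $p$-group containing the regular elementary abelian group $\prod$ of the relevant $\sigma_j$'s; a transitive $p$-subgroup of $\S_{p\ell}$ containing such a normal-looking base must in fact have that base normal, by the standard theory of Sylow subgroups of symmetric groups as iterated wreath products (\cite[(4.1.20)]{JK}) together with $\ell$ a power of $p$. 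Concretely, $H \le C \wr P$ for the appropriate wreath product with base $C^\ell$, $C$ cyclic of order $p$, and $\prod\sigma_j$ generates the full base $C^\ell$, which is characteristic hence normal in $C\wr P$; therefore $g$ normalizes $\prod\sigma_j$, and ranging over generators $g$ of $Q$ gives $\E(m) \unlhd Q$. The faithfulness of the action of $Q/\E(m)$ on the orbits of $\E(m)$ then follows because the kernel of $Q \to \S_m$ is contained in $\prod_j C_{\S_{\Omega_j}}(\langle\sigma_j|_{\Omega_j}\rangle) \cap (\text{$p$-group}) = \prod_j \langle \sigma_j|_{\Omega_j}\rangle = \E(m)$, using that $C_{\S_{\Omega_j}}(\langle\sigma_j|_{\Omega_j}\rangle) = \langle \sigma_j|_{\Omega_j}\rangle$ has no larger $p$-part.

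\textbf{Main obstacle.}
The subtle step is the first one — showing $g$ normalizes $\E(m)$ rather than merely permuting a \emph{larger} set of $p$-cycles — and the cleanest way to nail it is the wreath-product embedding: any $p$-subgroup of $\S_{mp}$ is contained in a Sylow $p$-subgroup, which is a direct product of iterated wreath products $C_p \wr \cdots \wr C_p$; on each orbit of $\langle\E(m),g\rangle$ the relevant iterated wreath product has a \emph{characteristic} elementary abelian base layer of rank equal to the number of blocks $\Omega_j$ in that orbit, and $\E(m)$ restricted there is exactly that base layer because it is generated by the coordinate $p$-cycles. I would present this via the standard structure theory, citing \cite[(4.1.20)]{JK}, rather than re-deriving it. Everything else is bookkeeping with orbits and the self-centralizing property of $\langle\sigma_j|_{\Omega_j}\rangle$ in $\S_{\Omega_j}$.
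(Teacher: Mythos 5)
Your overall strategy is the same as the paper's: embed the given $p$-group in a Sylow $p$-subgroup, invoke the iterated wreath product structure from \cite[(4.1.20)]{JK}, and identify $\E(m)$ with the normal base group. But there is a genuine gap at exactly the step you yourself flag as ``the subtle step''. You assert that $\E(m)$, restricted to an orbit of $H=\langle \E(m),g\rangle$, ``is exactly that base layer because it is generated by the coordinate $p$-cycles''. This begs the question: the wreath product containing $H$ is built from \emph{its own} system of size-$p$ blocks $\Delta_1,\ldots,\Delta_\ell$, and nothing you have said forces these to coincide with the sets $\Omega_j$. If they did not coincide, $g$ normalizing the base group would say nothing about $\E(m)$. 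The missing argument --- which is the entire content of the paper's proof --- is: each $p$-cycle $\sigma_j\in\E(m)$ lies in the wreath product and hence permutes the blocks $\Delta_i$; if it moved some $\Delta_i$ to a different block its support would contain at least $p^2$ points, contradicting that a $p$-cycle moves only $p$ points; so $\sigma_j$ fixes every $\Delta_i$ setwise and is supported on a single $\Delta_i$, whence $\Omega_j=\Delta_i$; and since the setwise stabilizer of the block system inside the Sylow subgroup is a $p$-group of order at most $p^{\ell}$ already containing the base group, $\sigma_j$ lies in the base. Only then does $\E(m)$ equal the base group, and normality plus your faithfulness argument go through.

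Two secondary corrections. The base layer is \emph{not} characteristic in the wreath product in general (the base of $C_2\wr C_2\cong D_8$ is one of two Klein four subgroups); you only need normality, which holds by construction, so nothing breaks, but the claim as stated is false for $p=2$, a case the lemma must cover. In the faithfulness step, an element of the kernel a priori only \emph{normalizes} each $\langle\sigma_j|_{\Omega_j}\rangle$ rather than centralizing it; the conclusion survives because the $p$-elements of $N_{\S_{\Omega_j}}(\langle\sigma_j\rangle)\cong\mathrm{AGL}_1(\F_p)$ all lie in $\langle\sigma_j\rangle$ (equivalently, a $p$-subgroup of $\prod_j\S_{\Omega_j}\cong\S_p^{\,m}$ containing $\E(m)$ must equal $\E(m)$), but you should say this rather than write $C$ for $N$. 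The abandoned false starts should of course be deleted from any final write-up.
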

\begin{proof}
Let $P$ be a Sylow $p$-subgroup of $\S_{mp}$ containing $Q$. By the construction
of Sylow $p$-subgroups in \cite[(4.1.20)]{JK}, $P$ has a normal base group $B$
generated by $m$ disjoint $p$-cycles $\tau_1, \ldots, \tau_m$. If $\sigma \in P$
then $\sigma$ permutes the orbits of the $\tau_k$ as blocks for its action.
It follows that if $\sigma \in \E(m)$ is a $p$-cycle then $\sigma \in \langle \tau_k \rangle$ for some $k$.
Hence $\E(m) = B$ and $\E(m) \unlhd P$. Moreover, if $\sigma \in P$ fixes each
orbit of $\E(m)$ setwise then $\sigma$ is a product of disjoint $p$-cycles each contained
in $\E(m)$. Therefore $P/\E(m)$ acts faithfully on the orbits of~$\E(m)$.
\end{proof}

In particular, Lemma~\ref{lemma:Enormal} determines how an elementary abelian $p$-subgroup that normalizes $\E(m)$
can act on the orbits of $\E(m)$.


\begin{lemma}\label{lemma:regular}
Let $d \in \N$ and let $\sigma\in\S_{p^d}$. Let $\E(p^{d-1})$ be as defined when $n = p^d$ and
let $X$ be the set of orbits of $\E(p^{d-1})$.
Let $A=\{1,2,\ldots,p^d\}$.
Suppose that a $p$-subgroup $P$ of $\S_{p^d}$ contains both $\u\sigma E_A$ and $\E(p^{d-1})$. Then
$\u\sigma E_A\E(p^{d-1})/\E(p^{d-1})$ acts regularly on $X$. 
\end{lemma}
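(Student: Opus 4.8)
The plan is to deduce this quickly from Lemma~\ref{lemma:Enormal} and a short permutation-group argument. First I would apply Lemma~\ref{lemma:Enormal} with $m = p^{d-1}$ to the $p$-subgroup $P$, which contains $\E(p^{d-1})$ by hypothesis. This gives $\E(p^{d-1}) \unlhd P$, so that the quotient $\overline{P} = P/\E(p^{d-1})$ is defined, and moreover $\overline{P}$ acts faithfully on $X$. Since $\u\sigma E_A \le P$, its image $\overline{G} = \u\sigma E_A \E(p^{d-1})/\E(p^{d-1})$ is a subgroup of $\overline{P}$, and hence also acts faithfully on $X$; and being a quotient of the elementary abelian group $\u\sigma E_A$, it is abelian.

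Next I would show that $\overline{G}$ is transitive on $X$. As $\u\sigma E_A \le P$ and $\E(p^{d-1}) \unlhd P$, the group $\u\sigma E_A$ normalizes $\E(p^{d-1})$ and so permutes its orbits; thus $X$ is a system of blocks of imprimitivity for the action of $\u\sigma E_A$ on $A = \{1,\ldots,p^d\}$. Since $E_A$, and hence $\u\sigma E_A$, is permutation isomorphic to the regular representation of an elementary abelian group of order $p^d = |A|$, the group $\u\sigma E_A$ is transitive on $A$, and therefore transitive on any block system for $A$, in particular on $X$. Passing to the quotient, $\overline{G}$ is transitive on $X$.

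Finally I would invoke the elementary fact that a finite abelian group acting faithfully and transitively on a set acts regularly: for an abelian group every point stabilizer coincides with its normal core, so a faithful action forces the (common) point stabilizer to be trivial. Applying this to $\overline{G}$ acting on $X$ yields the claim. I do not expect a genuine obstacle here; the one point that needs a little care is the identification of $X$ as a block system for $\u\sigma E_A$, which follows from the normality assertion of Lemma~\ref{lemma:Enormal} together with the containment $\u\sigma E_A \le P$.
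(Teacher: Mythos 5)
Your proof is correct and follows essentially the same route as the paper's: apply Lemma~\ref{lemma:Enormal} to $P$ to get normality of $\E(p^{d-1})$ and faithfulness of the quotient action on $X$, deduce transitivity on $X$ from transitivity of $\u\sigma E_A$ on $\{1,\ldots,p^d\}$, and conclude regularity from the fact that a faithful transitive action of an abelian group is regular. The only difference is that you spell out the block-system identification and the abelian-transitive-implies-regular step slightly more explicitly than the paper does.
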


\begin{proof}
By Lemma~\ref{lemma:Enormal} in the case $n = p^d$, the subgroup $\E(p^{d-1})$ is normal in $P$ and
$\u\sigma E_A\E(p^{d-1})/\E(p^{d-1})$ acts faithfully on $X$. 
Since $E_A$ acts transitively on $\{1,\ldots,p^d\}$,
the action of $\u\sigma E_A\E(p^{d-1})/\E(p^{d-1})$ on $X$ is transitive. Since
$E_A$ is abelian, it follows that $\u\sigma E_A\E(p^{d-1})/\E(p^{d-1}) \cong
\u\sigma E_A / (\E(p^{d-1}) \cap \u\sigma E_A)$ is also abelian and hence it acts regularly on $X$.
\end{proof}

We are now ready to prove Theorem~\ref{thm:char}.

\begin{proof}[Proof of Theorem \ref{thm:char} when $G=\S_{n}$]
We prove the theorem by induction on~$n$.
Let $Q \le \S_n$ be a $p$-subgroup containing an $\S_n$-conjugate
of every elementary abelian $p$-subgroup of $\S_n$.
Let $n = mp +c$ where $m \in \N$ and $0 \le c < p$.
If $c \ge 1$ then by conjugating $Q$
by an element of $\S_n$ we may assume that $Q \le \S_{mp}$. By induction
we get that $Q$ is a Sylow $p$-subgroup of $\S_{mp}$, and
hence a Sylow $p$-subgroup of $\S_n$.
We may therefore assume that $n=mp$, and, by induction,
that the theorem holds true for $\S_m$.

By replacing $Q$ with a $\S_{mp}$-conjugate, we may assume that $\E(m)$ is contained in $Q$. Let $X = \{B_1^1, \ldots, B_m^1\}$ be the set of orbits of $\E(m)$. By Lemma~\ref{lemma:Enormal} we have that $\E(m) \unlhd Q$ and $Q/\E(m)$ acts faithfully on $X$.


Let $\overline{H} \le \S_X$ denote
the image of a subgroup $H$ of $Q$ under the quotient map $Q \to Q/\E(m)$,
regarding $Q/\E(m)$ as a subgroup of $\S_X$.
We aim to prove that $\overline{Q}$ contains a $\S_X$-conjugate of every elementary abelian $p$-subgroup of $\S_X$.

Let $m = \ell_0 + \sum_{i=1}^t \ell_i p^i$ such that $\ell_t\neq 0$.
By assumption there exists $\sigma \in \S_{mp}$
such that $Q$ contains $\u\sigma E(\ell_0,\ldots, \ell_t)$. (Note that because of the index shift,
this subgroup has exactly $\ell_{i-1}$ orbits of size $p^i$ on $\{1,\ldots, mp\}$.)
We have
\[ \u\sigma E(\ell_0,\ldots, \ell_t) = \prod_{i=1}^{t+1} \prod_{j=1}^{\ell_{i-1}} E_{\sigma B^i_j}. \]
Fix $i \in \{1, \ldots, t+1\}$ and $j \in \{1,\ldots, \ell_{i-1}\}$.
Let $\Omega^i_j$ be the set of all $k \in \{1,\ldots, m\}$ such
that the orbit $B^1_k$ of $\E(m)$ is contained in the 
support of~$E_{\sigma B^{i}_j}$. 
Let
$X^i_j = \{B^1_k : k \in \Omega^i_j \}$. Thus $X$ is the disjoint union of the $X^{i}_j$, the support
of $E_{\sigma B^{i}_j}$ is $\bigcup_{k \in \Omega^i_j} B^1_k$
and $\overline{E_{\sigma B^i_j}}$ acts on $X^i_j$.
By Lemma~\ref{lemma:regular} this action is regular.
It follows that the elementary abelian $p$-subgroup
\[ \overline{\u\sigma E(\ell_0,\ldots, \ell_t)} \le \S_X \]
has exactly $\ell_{i}$ orbits on $X$ of size $p^{i}$ for each $i \in \{1,\ldots, t+1\}$. Since the $\ell_i$
were freely chosen,
we see that
$\overline{Q}$ contains a $\S_X$-conjugate of every elementary abelian $p$-subgroup of $\S_X$.
It now follows by induction that
 $\overline{Q}$ acts on $X$ as a Sylow $p$-subgroup of~$\S_X$.

If $p^a$ is the order of a Sylow $p$-subgroup of $\S_m$ then
\[ |Q| = |\E(m)|\, |\overline{Q}| = p^{m+a},\]
which is the order of a Sylow $p$-subgroup of $\S_{mp}$.
The theorem follows.
\end{proof}

\section{Proof of Theorem~\ref{thm:char} for alternating groups}
\label{sec:charA}

\subsection*{Case $p \ge 3$}
In this case any $p$-subgroup of $\S_n$ is a $p$-subgroup of $\A_n$.
Hence if $Q \le \A_n$ contains an $\A_n$-conjugate of every elementary abelian $p$-subgroup
of $\A_n$ then $Q$ contains an $\S_n$-conjugate of every elementary abelian $p$-subgroup
of $\S_n$, and so by Theorem~\ref{thm:char} for $\S_n$, we get that $Q$ is a Sylow $p$-subgroup
of $\S_n$.

\subsection*{Case $p=2$}
Let $n = 2m$.
We need the
classification of all maximal elementary abelian $2$-subgroups of $\A_{n}$ up to $\S_{n}$-conjugacy.
Suppose that
\begin{equation}\label{eq:dec2}
 n = 2m_1 + 2^2m_2 + \cdots + 2^r m_r
\end{equation}
where $m_i \in \N_0$ for each $i$, $m_r \not=0$ and $m_1 \not= 2$. For each such expression define
\[ F(m_1,m_2,\ldots, m_r) = E(m_1,m_2,\ldots, m_r) \cap \A_n. \]
The reason for excluding the case $m_1 = 2$ is that $E(2,m_2,\ldots,m_r)\cap \A_n$ is properly contained in $F(0,m_2+1,m_3,\ldots,m_r)$.
Note also that $F(m_1,m_2,\ldots,m_r)$ has exactly two fixed points on $\{1,\ldots,n\}$ if $m_1=1$.


\begin{proposition}[{\cite[Proposition 5.2]{AKMU}}]
\label{prop:maxalt} The subgroups $F(m_1,\ldots,m_r)$ where 
$m_1,\ldots, m_r$ satisfy~\eqref{eq:dec2} form a complete irredundant set
of maximal elementary 
abelian $2$-subgroups of $\A_{2m}$ up to $\S_{2m}$-conjugacy.
\end{proposition}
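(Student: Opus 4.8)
The plan is to deduce the classification over $\A_{2m}$ from the one over $\S_{2m}$ in Proposition~\ref{prop:max}, and then to decide which of the resulting intersections are genuinely maximal by an explicit centralizer computation. \textbf{Completeness.} Let $V\le\A_{2m}$ be a maximal elementary abelian $2$-subgroup. Since $V$ is elementary abelian in $\S_{2m}$, it lies inside some maximal elementary abelian $2$-subgroup of $\S_{2m}$, which by Proposition~\ref{prop:max} (applied with $p=2$) is $\S_{2m}$-conjugate to $E(m_1,\ldots,m_r)$ for some tuple with $2m=2m_1+\cdots+2^rm_r$ and $m_r\ne 0$, at this stage with $m_1$ unrestricted. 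As $\S_{2m}$ normalizes $\A_{2m}$, conjugation preserves maximality, so after replacing $V$ by a conjugate we may assume $V\le E:=E(m_1,\ldots,m_r)$; then $V\le E\cap\A_{2m}$, and since $E\cap\A_{2m}$ is elementary abelian and $V$ is maximal, $V=E\cap\A_{2m}$. It remains to show that $m_1=2$ cannot arise here, that $E(m_1,\ldots,m_r)\cap\A_{2m}$ is maximal whenever $m_1\ne 2$, and that distinct admissible tuples give non-conjugate subgroups.

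\textbf{The role of $m_1$.} A non-identity element of $E_{B^i_j}$ with $i\ge 2$ acts fixed-point-freely on $2^i$ points, hence is a product of $2^{i-1}$ transpositions and so is even; only the generators $t_j$ of the $E_{B^1_j}$, which are single transpositions, are odd. Hence if $m_1=0$ then $E\le\A_{2m}$ and $E\cap\A_{2m}=E$ is already maximal in $\S_{2m}$, and if $m_1\ge 1$ then $[E:E\cap\A_{2m}]=2$. When $m_1=2$ the even part of $E$ is $\langle t_1t_2\rangle\times\prod_{i\ge 2}\prod_jE_{B^i_j}$, and $\langle t_1t_2\rangle$ lies properly inside a regular Klein four-group on the four points $B^1_1\cup B^1_2$; thus $E(2,m_2,\ldots,m_r)\cap\A_{2m}$ is properly contained in a conjugate of $F(0,m_2+1,m_3,\ldots,m_r)$ and is not maximal. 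This both rules out $m_1=2$ in the completeness reduction and shows why those tuples must be excluded from the list.

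\textbf{Maximality of the listed subgroups.} Fix a tuple with $m_1\ne 2$, put $E=E(m_1,\ldots,m_r)$ and $V=E\cap\A_{2m}$. It is enough to prove $V$ is self-centralizing in $\A_{2m}$, for then any elementary abelian $W$ with $V\le W\le\A_{2m}$ is contained in $C_{\A_{2m}}(V)=V$. The heart of the argument is the identity $C_{\S_{2m}}(V)=E$, which I would prove as follows. An element of $C_{\S_{2m}}(V)$ permutes the $V$-orbits and preserves their lengths. On an orbit $B^i_j$ of length $2^i$ with $i\ge 2$ the factor $E_{B^i_j}$ acts regularly, so its centralizer in $\S_{B^i_j}$ is $E_{B^i_j}$ itself, and two distinct such orbits cannot be interchanged by a centralizing element since that would conjugate one regular factor of $V$ onto another. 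The remaining orbits all lie in $B^1_1\cup\cdots\cup B^1_{m_1}$, on which $V$ induces $H=\langle t_it_j:1\le i<j\le m_1\rangle$; the key local fact is that the centralizer of $H$ in the symmetric group on those $2m_1$ points is $\langle t_1,\ldots,t_{m_1}\rangle$ precisely when $m_1\ne 2$ --- easy for $m_1\le 1$, and for $m_1\ge 3$ because a permutation $\pi$ of $\{1,\ldots,m_1\}$ with $\{\pi(i),\pi(j)\}=\{i,j\}$ for all $i<j$ must be the identity, whereas for $m_1=2$ the centralizer of a double transposition in $\S_4$ has order $8$. Assembling these contributions over the disjoint point sets gives $C_{\S_{2m}}(V)=\langle t_1,\ldots,t_{m_1}\rangle\times\prod_{i\ge 2}\prod_jE_{B^i_j}=E$, hence $C_{\A_{2m}}(V)=V$ and $V$ is maximal.

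\textbf{Irredundancy and the main obstacle.} Finally, the multiset of orbit lengths of $F(m_1,\ldots,m_r)$ on $\{1,\ldots,2m\}$ --- namely $m_i$ orbits of length $2^i$ for each $i\ge 2$, together with $m_1$ orbits of length $2$ if $m_1\ge 3$, exactly two fixed points if $m_1=1$, and no orbit of length $\le 2$ if $m_1=0$ --- is an $\S_{2m}$-conjugacy invariant, and, given $m_1\ne 2$, it determines the tuple $(m_1,\ldots,m_r)$; so distinct admissible tuples yield non-conjugate subgroups. The only step with real content is the identity $C_{\S_{2m}}(V)=E$, and inside it the local computation of the centralizer of $H$: since $V$ has index at most $2$ in $E$ one cannot simply replace $C_{\S_{2m}}(V)$ by $C_{\S_{2m}}(E)$, and it is exactly the larger centralizer in the exceptional case $m_1=2$ that forces the hypothesis in the statement; the rest is routine bookkeeping of orbits.
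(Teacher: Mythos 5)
Your proof is correct, and while the completeness and irredundancy steps match the paper's, your maximality argument takes a genuinely different route. Like the paper, you reduce to Proposition~\ref{prop:max}, observe that $E(m_1,\ldots,m_r)\cap\A_{2m}$ equals $E(m_1,\ldots,m_r)$ when $m_1=0$ and has index $2$ otherwise, exclude $m_1=2$ via the regular Klein four-group on $B^1_1\cup B^1_2$, and separate conjugacy classes by orbit lengths. Where you diverge is in showing that $F(m_1,\ldots,m_r)$ is actually maximal for $m_1\ne 2$: the paper proves Lemma~\ref{lemma:FsubgroupE}, namely that any containment $F(m_1,\ldots,m_r)\le\u\sigma E(m_1',\ldots,m_s')$ forces the tuples to coincide --- the key device being the double transpositions $\tau_k=(2k-1,2k)(2k+1,2k+2)$, which pin down the blocks $\sigma B^2_j$ --- and then deduces maximality by comparing the groups' orders. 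You instead establish the identity $C_{\S_{2m}}\bigl(E(m_1,\ldots,m_r)\cap\A_{2m}\bigr)=E(m_1,\ldots,m_r)$, the essential local input being that the centralizer in $\S_{2m_1}$ of the even part of $\langle t_1,\ldots,t_{m_1}\rangle$ is $\langle t_1,\ldots,t_{m_1}\rangle$ precisely when $m_1\ne 2$; this makes $F(m_1,\ldots,m_r)$ self-centralizing in $\A_{2m}$, hence maximal even among abelian subgroups of $\A_{2m}$ --- a slightly stronger conclusion, parallel to the remark following Proposition~\ref{prop:max}. What the paper's route buys is Lemma~\ref{lemma:FsubgroupE} itself, whose $\tau_k$-technique is reused in the proof of Theorem~\ref{thm:char} for $\A_n$ with $p=2$; what yours buys is a shorter, purely local computation and the self-centralizing statement. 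Both arguments correctly identify the centralizer of order $8$ of a double transposition in $\S_4$ (equivalently, the enlargement to $F(0,m_2+1,\ldots,m_r)$) as the source of the exceptional case $m_1=2$.
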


In \cite{AKMU} the authors remark that the proposition follows from the analogous result for $\S_n$.
We take this opportunity to provide a complete proof. The following lemma is required.

\begin{lemma}\label{lemma:FsubgroupE}
Suppose that $m_1, \ldots, m_r$ are as in~\eqref{eq:dec2} and $m_1', \ldots, m_s'$ are as in~\eqref{eq:dec}.
If $F(m_1,\ldots,m_r)\le \u\sigma E(m_1',\ldots,m_s')$ for some $\sigma\in\S_{2m}$
then $r=s$ and $m_i=m_i'$ for all $i \in \{1,\ldots, r\}$.
\end{lemma}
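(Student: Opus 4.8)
The plan is to identify the centralizer of $F(m_1,\ldots,m_r)$ in $\S_{2m}$, use it to squeeze $\u\sigma E(m_1',\ldots,m_s')$ between $F(m_1,\ldots,m_r)$ and $E(m_1,\ldots,m_r)$, and then read off $r$, $s$ and the $m_i$ from a comparison of orbits on $\{1,\ldots,2m\}$.

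The key step is the claim that $C_{\S_{2m}}(F(m_1,\ldots,m_r)) = E(m_1,\ldots,m_r)$, which holds precisely because $m_1\neq 2$; only the inclusion $\subseteq$ is needed. Write $F=F(m_1,\ldots,m_r)$. First I would note that $F$ acts regularly on each of its orbits on $\{1,\ldots,2m\}$ of size $\ge 2$, and that on each block $B^i_j$ with $i\ge 2$ it contains the whole regular subgroup $E_{B^i_j}$: translation by a nonzero vector of $\F_2^i$ is a product of $2^{i-1}$ transpositions, hence even when $i\ge 2$, so $E_{B^i_j}\le\A_{2m}$. Next I would check that distinct orbits of $F$ of size $\ge 2$ have distinct kernels $\ker(F\to\S_O)$ — for the blocks $B^i_j$ with $i\ge 2$ these are distinct direct factors being killed, and (when $m_1\ge 3$) the pairs $B^1_j$ give distinct kernels inside the even-weight subgroup of $\F_2^{m_1}$; this is the single point at which $m_1\neq 2$ enters, since when $m_1=2$ the two pair-orbits of $F$ have equal kernel. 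Now if $g\in C_{\S_{2m}}(F)$ maps an orbit $O$ to an orbit $O'$, then $g|_O:O\to O'$ is $F$-equivariant, forcing $\ker(F\to\S_O)=\ker(F\to\S_{O'})$; hence $g$ fixes each orbit of size $\ge 2$ setwise, and on such an orbit it lies in $C_{\S_O}(F|_O)=F|_O$ because $F|_O$ is abelian and regular. On the at most two fixed points of $F$ (which occur only when $m_1=1$, where they form the single pair $B^1_1$) a centralizing element can only act inside $\S_{B^1_1}=E_{B^1_1}$. Assembling the local contributions gives $g\in E(m_1,\ldots,m_r)$.

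Granting this, suppose $F(m_1,\ldots,m_r)\le \u\sigma E(m_1',\ldots,m_s')$. Taking centralizers reverses the inclusion, and since $\u\sigma E(m_1',\ldots,m_s')$ is abelian it lies in its own centralizer, hence in $C_{\S_{2m}}(F(m_1,\ldots,m_r))=E(m_1,\ldots,m_r)$. Therefore
\[ F(m_1,\ldots,m_r) \le \u\sigma E(m_1',\ldots,m_s') \le E(m_1,\ldots,m_r). \]
Now I would compare orbits on $\{1,\ldots,2m\}$. The orbits of $E(m_1,\ldots,m_r)$ are the blocks $B^i_j$, that is, $m_i$ orbits of size $2^i$ for each $i$, with no fixed points; the orbits of $F(m_1,\ldots,m_r)$ are the same, except that when $m_1=1$ the pair $B^1_1$ splits into two fixed points. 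A group sandwiched between the two therefore has every $B^i_j$ with $i\ge 2$ — and, when $m_1\neq 1$, every pair $B^1_j$ — among its orbits; and when $m_1=1$ the pair $B^1_1$ cannot split for $\u\sigma E(m_1',\ldots,m_s')$, since $E(m_1',\ldots,m_s')$ has no fixed points. Hence $\u\sigma E(m_1',\ldots,m_s')$ has exactly $m_i$ orbits of size $2^i$ for each $i$; as by construction it also has exactly $m_i'$ orbits of size $2^i$ for each $i$, we conclude $m_i=m_i'$ for all $i$, and in particular $r=s$.

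The part I expect to demand the most care is the centralizer identity: the bookkeeping that pins down which orbits a centralizing element may permute, the verification that the kernels of the orbit actions of $F$ are genuinely distinct, and checking that $m_1\neq 2$ is exactly what makes this work — indeed, for $m_1=2$ one has $E(2,m_2,\ldots,m_r)\cap\A_{2m}\le \u\sigma E(0,m_2+1,m_3,\ldots,m_r)$, so the conclusion genuinely fails there.
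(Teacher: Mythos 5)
Your proof is correct, and it takes a genuinely different route from the paper's. The paper argues by cases on $m_1$: for $m_1=0$ it invokes the maximality of $E(0,m_2,\ldots,m_r)$ via Proposition~\ref{prop:max}; for $m_1=1$ it uses the two fixed points to force $m_1'=1$ and reduces to the previous case; and for $m_1\ge 3$ it runs a hands-on argument with the double transpositions $\tau_k=(2k-1,2k)(2k+1,2k+2)$ to show that every transposition $(2k-1,2k)$ lies in $\u\sigma E(m_1',\ldots,m_s')$, whence $m_1=m_1'$ by counting orbits of size $2$, after which the pairs are split off and Proposition~\ref{prop:max} is applied again. You instead prove the single structural fact $C_{\S_{2m}}(F(m_1,\ldots,m_r))\subseteq E(m_1,\ldots,m_r)$ --- valid precisely because $m_1\ne 2$, the equality of the two pair-kernels in the even-weight subgroup of $\F_2^2$ being exactly what fails when $m_1=2$ --- deduce the sandwich $F(m_1,\ldots,m_r)\le \u\sigma E(m_1',\ldots,m_s')\le E(m_1,\ldots,m_r)$ from the abelianness of $\u\sigma E(m_1',\ldots,m_s')$, and finish by comparing orbit sizes, using that $E(m_1',\ldots,m_s')$ has no fixed points to rule out the splitting of $B^1_1$ when $m_1=1$. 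Your argument is uniform in $m_1$, avoids any appeal to Proposition~\ref{prop:max} or to an auxiliary reduction, and yields the stronger conclusion $\u\sigma E(m_1',\ldots,m_s')\le E(m_1,\ldots,m_r)$; the price is the centralizer computation, whose delicate points (distinctness of the kernels of $F$ acting on its orbits, the self-centralizing property of the regular abelian image on each orbit, and the fixed-point case $m_1=1$) you have handled correctly.
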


\begin{proof}
If $m_1 = 0$ then $F(0,m_2, \ldots,m_r) = E(0,m_2,\ldots,m_r)$ is maximal and by Proposition~\ref{prop:max}
we have $m_i = m_i'$ for all $i \in \{2,\ldots, r\}$. Hence $m_1' = 0$ and the lemma holds
in this case. If $m_1 = 1$ then $F(1,m_2,\ldots,m_r)$ is equal to the subgroup $E(0,m_2,\ldots,m_r)$. Since $E(0,m_2,\ldots,m_r)$ has exactly two fixed points and no orbit of size $2$ on $\{1,\ldots,2m\}$, we have $m_1'=1$. It follows that $\sigma$ fixes the set $\{1,2\}$ and hence $\u\sigma E(1,m_2,\ldots,m_r)\le E(m_1',\ldots,m_s')$. We may again apply Proposition \ref{prop:max}.

Suppose that $m_1 \ge 3$. Let $\tau_k = (2k-1,2k)(2k+1,2k+2)$
for $k \in \{1, \ldots, m_1 -1\}$. Let $B^2_j$ for $j \in \{1,\ldots,m_2'\}$
be as defined after~\eqref{eq:dec} for the subgroup $E(m_1',\ldots,m_s')$.
If there exist $j$ and $k$ such that $\tau_k \in \u\sigma E_{B^2_j}$ then
$\sigma(B^2_j) = \{2k-1,2k,2k+1,2k+2\}$ and an element of $\u\sigma E(m_1',\ldots,m_s')$
moves $2k-1$ if and only if it moves $2k+1$. But if $k > 1$ then $\tau_{k-1} \in F(m_1,\ldots,m_r)$
and otherwise $k=1$ and $\tau_2 \in F(m_1,\ldots,m_r)$, so this is a contradiction.
Hence all the transpositions $(2k-1,2k)$ for $1 \le k \le m_1$ lie in $E(m_1',\ldots, m_s')$
and so $m_1' \ge m_1$. On the other hand, $F(m_1,\ldots,m_r)$ has $m_1$ orbits
of size $2$ on $\{1,\ldots, 2m\}$ and $\u\sigma E(m_1',\ldots,m_r')$ has $m_1'$ orbits
of size $2$ on $\{1,\ldots, 2m\}$; since $F(m_1,\ldots,m_r) \le \u\sigma E(m_1',\ldots,m_r')$
we have $m_1 \ge m_1'$. Therefore $m_1 = m_1'$. Hence there exist $\tau, \tau' \in \S_{2m}$ such that
\begin{align*}
F(m_1,m_2,\ldots,m_r) &= F(m_1) \times \u\tau F(0,m_2,\ldots,m_r) \\
E(m_1',m_2',\ldots,m_s') &= \u\sigma E(m_1) \times \u{\tau'} E(0,m_2',\ldots,m_s')
\end{align*}
and $\u\tau F(0,m_2,\ldots,m_r) \le \u{\tau'} E(0,m_2',\ldots,m_s')$.
Both groups have support of size $2m - 2m_1$.
Hence, by the special case $m_1=0$ we get
$r=s$ and $m_i = m_i'$ for all $i \in \{2,\ldots, r\}$.
\end{proof}

\begin{proof}[Proof of Proposition~\ref{prop:maxalt}]
Let $F$ be an elementary abelian $p$-subgroup of $\A_{2m}$.
By Proposition~\ref{prop:max}
there exist $m_1', \ldots, m_s'$ and $\sigma \in \S_{2m}$ such that
\[F \le \u\sigma E(m_1',\ldots,m_s')\cap \A_{2m}=\u\sigma(E(m_1',\ldots,m_s')\cap \A_{2m}).\] If $m_1' \not = 2$ then $E(m_1',\ldots,m_s') \hskip0.5pt\cap\hskip0.5pt \A_{2m} = F(m_1',\ldots,m_s')$. In the remaining case we have $E(2,m_2',\ldots,m_s') \hskip0.5pt\cap\hskip0.5pt \A_{2m} \le
F(0,m_2'+1,\ldots,m_s')$.
Suppose further that $F=F(m_1,\ldots,m_r)$. By Lemma \ref{lemma:FsubgroupE}, we have $r=s$ and $m_i=m_i'$ for all $i\in \{1,\ldots,r\}$. Therefore the subgroups $F(m_1,\ldots,m_r)$
for $m_1,\ldots,m_r$ satisfying~\eqref{eq:dec2} form a complete set
of maximal elementary abelian 
$2$-subgroups of $A_{2m}$ up to $\S_{2m}$-conjugacy.
It is clear from the orbits of these subgroups on $\{1,\ldots,2m\}$
that no two of them are conjugate.
\end{proof}

We are now ready to prove Theorem~\ref{thm:char} in the alternating group case.

\begin{proof}[Proof of Theorem~\ref{thm:char} when $G=\A_n$ and $p=2$]

As in the proof of Theorem \ref{thm:char} for $\S_n$ we reduce to the case when $n = 2m$.
The cases $m=1$ and $m=2$ are easily checked, so we may assume that $m \ge 3$.

Let $Q \le \A_{2m}$ be a $2$-subgroup containing an $\A_{2m}$-conjugate of every
elementary abelian $2$-subgroup of $\A_{2m}$. Without loss of generality
we may assume that $\D(m)$ is contained in $Q$. Let $P$ be a Sylow $2$-subgroup
of $\S_{2m}$ containing $Q$. As in the proof of Lemma~\ref{lemma:FsubgroupE},
let $\tau_k = (2k-1,2k)(2k+1,2k+2)\in Q$ for $1 \le k < m$. Let $(x,y)$ be a
transposition in $P$ with $x < y$. If $y\not= x +1$ then there exists
$k$ such that exactly one of $x,y$ lies in $\{2k-1,2k,2k+1,2k+2\}$.
It is easily checked that in this case $[(x,y), \tau_k]$ is a $3$-cycle. 
So $y=x+1$. Moreover, if $x = 2k$ and $y=2k+1$ for some~$k$ then either $k > 1$
and $(2k,2k+1)\tau_{k-1}$ contains a $3$-cycle, or $k=1$ and $(2,3)\tau_2$
contains a $3$-cycle. It follows that
the transpositions in $P$ are of the form $(2k-1,2k)$ for $k \in \{1,\ldots, m\}$. Therefore
$\D(m) \le \E(m) \le P$.

By Lemma~\ref{lemma:Enormal}, $\E(m)\unlhd P$. In particular, $Q \le
\mathrm{N}_{\S_{2m}}(\E(m))$.
Since $Q\E(m)/\E(m) \cong \E(m)/(\E(m) \cap Q) = \E(m)/\D(m)$, we see that
$Q\E(m) = \langle  Q, (1,2) \rangle$ is a $2$-group having $Q$ as a subgroup of index $2$.

Let $E=E(m_1,m_2,\ldots,m_r)$ be a maximal elementary abelian $2$-subgroup of~$\S_{2m}$. Let
$F = E(m_1,m_2,\ldots,m_r) \cap \A_{2m}$. By hypothesis there exists $\sigma \in \S_{2m}$ such
that $\u\sigma F \le Q$. If $m_1=0$ then $\u\sigma E=\u\sigma F\le Q$. Suppose that $m_1\ge 1$. Let $\Omega=\{\sigma(2m_1+1),\ldots,\sigma(2m)\}$ and let $2i-1\le \sigma(1)\le 2i$. Since $\sigma(1)\not\in\Omega$, similar arguments to those used earlier in the proof show that $\{2i-1,2i\}\cap \Omega=\varnothing$. Again similar arguments show that
either
\begin{itemize} 
\item[(a)] $(\sigma(1),\sigma(2))=(2i-1,2i)$, or
\item[(b)]  $m_1=2$ and $(2i-1,2i)$ is either $(\sigma(1),\sigma(3))$ or $(\sigma(1),\sigma(4))$.
\end{itemize}
So
\[\langle (2i-1,2i),\u\sigma F\rangle=\left \{\begin{array}{ll} \u\sigma E
&\text{in case (a),} 
\\ \u\sigma(R\times E(0,m_2,\ldots,m_r))&\text{in case (b),}\end{array}\right .\] for some Sylow $2$-subgroup $R$ of $\S_4$. In either case, $\langle (2i-1,2i),\u\sigma F\rangle$ contains a $\S_{2m}$-conjugate of $E$. Clearly, $\langle (2i-1,2i),\u\sigma F\rangle\le QC(m)$. It follows that $Q\E(m)$ contains all maximal elementary abelian $2$-subgroups of $\S_{2m}$ up to $\S_{2m}$-conjugacy. By Theorem~\ref{thm:char} for $\S_n$, we see that $Q\E(m)$ is a Sylow $2$-subgroup of $\S_{2m}$. Hence $Q\E(m) \cap \A_{2m} = Q$ is a Sylow $2$-subgroup of~$\A_{2m}$.
\end{proof}


We note that this proof only used the weaker hypothesis that $Q$ contains an $\S_{2m}$-conjugate
of every elementary abelian $2$-subgroup of $\A_{2m}$. As remarked in \cite{AKMU}, the
$2$-subgroups $E(0,0,m_3,\ldots,m_r)$ are not normalized by any odd elements in $\A_{2m}$,
and so there are two $\A_{2m}$-conjugacy classes of such subgroups; in all other
cases the $\A_{2m}$- and the $\S_{2m}$-classes agree.
This may be proved as follows:\footnote{The proof in \cite{AKMU} is indicated very briefly, but seems to incorrectly assume
that if a subgroup of $\A_n$ is normalized by an odd element then it is normalized by a transposition.}
if $A = \{1,\ldots, 2^d\}$ then $\mathrm{N}_{S_{2^d}}(E_A)$
is permutation isomorphic to the affine general linear group $\mathrm{AGL}_d(\F_2)$; this group
is generated by translations and transvections, both of which
are even permutations of $\F_2^d$ when $d \ge 3$.
The normalizer of $E(0,0,m_3,\ldots,m_r)$  factors as a direct product of wreath
products $\mathrm{N}_{S_{2^d}}(E_{A_i}) \wr S_{m_i}$
where $A_i$ has size $2^i$. The permutations
in a copy of the top group $\S_{m_i}$ act on blocks of size $2^i$ and so are even.
Hence $\mathrm{N}_{S_{2^d}}(E(0,0,m_3,\ldots,m_r))
\le \A_{2m}$. It is easily seen that $\langle (12)(34) \rangle$
and $\langle (12)(34), (13)(24) \rangle$ are normalized by odd permutations in~$\S_4$,
so these
are the only exceptional cases.


\section{Proof of Theorem~\ref{thm:vertex} from Theorem~\ref{thm:elem}}\label{sec:elemtovertex}

In this section, we deduce Theorem~\ref{thm:vertex} from Theorem~\ref{thm:elem} using
Theorem~\ref{thm:char}.

Let $\overline{F}$ be the algebraic closure of $F$. By \cite[Theorem 1.21]{HuppertBlackburn}, for any $FG$-module $M$ and $H\le G$, we have that $\overline{F}\otimes_F M$ is a direct summand  
of $\overline{F}\otimes_F (M\res_H\ \ind^G)\cong (\overline{F}\otimes_F M)\res_H\ \ind^G$ if and only if $M$ is a direct summand of $M\res_H\ \ind^G$. Thus
the vertices of an indecomposable $FG$-module do not change under field extensions of $F$.

Without loss of generality, we may assume that $F$ is an algebraically closed field. Let $Q$ be a vertex of the Specht module $S^{(kp-p,1^p)}$
where $k \equiv 1$ mod $p$ and $k \not\equiv 1$ mod $p^2$.
By Theorem~\ref{thm:char}, it is sufficient to prove that if
\[ kp = m_1p + m_2p^2 + \cdots + m_rp^r \]
where $m_i \in \N_0$ for each $i \in \{1,\ldots, r\}$ and $m_r\neq 0$ then $Q$ contains a conjugate of $E(m_1,m_2,\ldots,m_r)$.
If $m_2 \not= 0$ then
Theorem~\ref{thm:elem}(ii) applies. If $m_2 = 0$ then $k = m_1 + m_3p^2 + \cdots + m_rp^{r-1}$ and
so $k \equiv m_1$ mod $p^2$. Hence $m_1 \ge 2$ and Theorem~\ref{thm:elem}(i) applies.
This completes the proof.

\section{Hook-Specht modules}
\label{sec:hook}

For maximal generality we work in this section with modules over an arbitrary
commutative ring $K$. The definition of Specht modules given in \cite[Definition 4.3]{James}
extends easily to this setting. 
Fix $n \in \N$. Let $\EE = \langle e_1, \ldots, e_n \rangle$
be the $K$-free natural permutation module for $K S_n$ of rank $n$.
Then, by definition, $S^{(n-1,1)}$ is the
$K$-free submodule of $\EE$ with $K$-basis $\{e_2-e_1,\ldots,e_n-e_1\}$.

We begin by establishing the isomorphism of $K\S_n$-modules
$S^{(n-r,1^r)} \cong \bigwedge^r S^{(n-1,1)}$. This isomorphism can be found,
most obviously in the case when $K$ is a finite field of characteristic $2$, in
Peel's papers \cite[Section 6]{PeelHooks} and~\cite{Peel75}.
When $K$ is a field of prime characteristic $p$
and $n = p$ it was proved by Hamernik \cite{Hamernik}; it is easily seen that Hamernik's
proof also works in the general case.
When $K$ is a field it was proved by M{\"u}ller and Zimmermann in
\cite[Proposition 23(a)]{MullerZimmermann}.
The proof given here combines ideas from both \cite{Hamernik} and \cite{MullerZimmermann}.

\begin{proposition}\label{prop:hookIso}
Let $1\leq r\leq n-1$.
\begin{enumerate}
\item [(i)] The set
\[ \bigl\{ (e_{i_1} - e_1) \wedge \cdots \wedge (e_{i_r} - e_1) : 1 < i_1 < \cdots < i_r \le n
\bigr\} \]
is a $K$-basis of $\bigwedge^r S^{(n-1,1)}$.

\item [(ii)] The map sending
$(e_{i_1} - e_1) \wedge \cdots \wedge (e_{i_r} - e_1) \in
\bigwedge^r S^{(n-1,1)}$ to $e_{\t} \in S^{(n-r,1^r)}$, where  $1 < i_1 < \cdots < i_r \le n$ and
$\t$ is the unique standard tableau
of shape $(n-r,1^r)$ having $i_1, \ldots, i_r$ in its rows of length one, is an isomorphism of $K\S_n$-modules.
\end{enumerate}
\end{proposition}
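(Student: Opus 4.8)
The plan is to prove (i) and (ii) together, using an explicit dimension count and an explicit inverse map. For part (i), I would first observe that $S^{(n-1,1)}$ is $K$-free with basis $f_i := e_i - e_1$ for $2 \le i \le n$, so the exterior power $\bigwedge^r S^{(n-1,1)}$ is $K$-free with basis given by the wedges $f_{i_1} \wedge \cdots \wedge f_{i_r}$ over all $1 < i_1 < \cdots < i_r \le n$; this is just the standard basis of an exterior power of a free module, and it has rank $\binom{n-1}{r}$. That already matches $\dim S^{(n-r,1^r)} = \binom{n-1}{r}$, so part (i) is essentially immediate and the content is all in part (ii).

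For part (ii), I would define the map $\varphi$ on basis elements as stated — sending $f_{i_1} \wedge \cdots \wedge f_{i_r}$ to $e_{\t}$ where $\t$ is the standard $(n-r,1^r)$-tableau with the $i_j$ filling the leg — and then argue it is a $K\S_n$-homomorphism. The natural way to do this is to relate it to the standard combinatorial description of $S^{(n-r,1^r)}$: a polytabloid $e_{\t}$ for a hook shape is an alternating sum over the column-stabilizer of $\t$, and the column stabilizer is just $\S$ acting on the entries $\{1, i_1, \ldots, i_r\}$ of the first column. So I want to show that $\varphi$ intertwines the $\S_n$-action, which I would verify by checking it on the generators (adjacent transpositions) or, more cleanly, by checking it agrees with a manifestly equivariant map. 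The cleanest route: extend the assignment $e_S \mapsto$ (appropriate wedge) linearly, where $S$ ranges over $r$-subsets of $\{1,\ldots,n\}$, using the convention that $e_S$ for $S = \{i_1,\ldots,i_r\}$ (any order) maps to a signed wedge of the $e_{i_j}$'s projected into $S^{(n-1,1)}$; the point is that $\bigwedge^r \EE$ surjects onto $\bigwedge^r S^{(n-1,1)}$ $\S_n$-equivariantly (since $\EE \to S^{(n-1,1)}$, $e_i \mapsto e_i - e_1$ — wait, that is not quite a map; rather $S^{(n-1,1)} \hookrightarrow \EE$). I would instead use: there is an $\S_n$-equivariant map $\bigwedge^r \EE \to \bigwedge^{r} S^{(n-1,1)}$? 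No — I would use the surjection $\EE \twoheadrightarrow \EE/\langle e_1 + \cdots + e_n\rangle$ only in suitable characteristic, so better to stay concrete.

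Concretely, then, I would prove equivariance by direct computation: for a transposition $(a\,b)$ and a basis wedge $w = f_{i_1}\wedge\cdots\wedge f_{i_r}$, expand $(a\,b)\cdot w$ in the $f$-basis (using $(a\,b) f_i = f_{(a\,b)i}$ for $i \ne 1$, together with $(1\,b) f_i = f_i - f_b$ for $i \ne 1, b$ and $(1\,b) f_b = -f_b$), and check that $\varphi$ sends the result to $(a\,b)\, e_{\t}$, matching the known action of transpositions on the standard basis $\{e_{\t}\}$ of a hook Specht module (Garnir/straightening relations for hooks are simple: they just say $\sum_{i \notin \text{leg}\cup\{j\}} e_{\t \text{ with } j\to i \text{ swapped}} = 0$ type identities, reflecting $f_1 = 0$, i.e. $\sum(e_i - e_1)$ has no distinguished role). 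Finally, injectivity/surjectivity: since both modules are $K$-free of the same finite rank $\binom{n-1}{r}$ and $\varphi$ carries a basis to a basis (the $e_{\t}$ over standard $\t$ form a basis of $S^{(n-r,1^r)}$ by James), $\varphi$ is an isomorphism of $K$-modules, hence of $K\S_n$-modules once equivariance is established. The main obstacle is the equivariance bookkeeping when the transposition involves the index $1$: there the wedge $(1\,b)\cdot w$ expands as a sum of several basis wedges (because $f_i \mapsto f_i - f_b$), and I must check this sum maps under $\varphi$ to the correct $\S_n$-action on $e_{\t}$, which requires knowing the hook Specht module's straightening law precisely — but for hooks this law is elementary, so I expect it to go through cleanly after setting up notation carefully.
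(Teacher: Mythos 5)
Your proposal is correct and follows essentially the same route as the paper: part (i) is the standard basis of an exterior power of a free module, and part (ii) is proved by observing the map sends a basis to the standard basis (so is a $K$-linear isomorphism) and then verifying equivariance by direct computation, with the only nontrivial case being a transposition moving $1$, where the expansion $(1\,b)f_i = f_i - f_b$ on the wedge side must be matched against the Garnir/straightening relation for the hook polytabloid. The paper streamlines this slightly by checking only the stabilizer of $1$ (where equivariance is immediate) together with the single transposition $(12)$, which suffices since these generate $\S_n$, but the substance of the verification is identical to what you outline.
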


\begin{proof} Part (i) is obvious from the basis of $S^{(n-1,1)}$ above.
By (i) and the Standard Basis Theorem for Specht modules
(see \cite[Lemma~8.2 and Corollary~8.9]{James}),
the map defined in (ii) is a $K$-linear isomorphism.
So all we have to check is that it commutes with the action of $\S_n$.
For the subgroup of $\S_n$ fixing $1$ this is obvious.
So it suffices to check the action of the permutation $(12)$.
Let $w = (e_{i_1} - e_1) \wedge \cdots \wedge (e_{i_r} - e_1)$
where $1 < i_1 < \cdots < i_r \le n$.
If $i_1 = 2$ then it is clear that $(12)e_{\t} = -e_{\t}$ and $(12)w = -w$.
Suppose that $i_1\neq 2$. For each $1\leq a\leq r$,
let $\t_a$ be the standard tableau 
having $2, i_1, \ldots, \widehat{i_a}, \ldots, i_r$ in its rows of length $1$,
where the hat over $i_a$ indicates this entry is omitted.
By the Garnir relation (see \cite[Theorem~7.2]{James})
involving all entries in the first column of $\t$,
and the single box in the second column of $\t$,
we have
\[(12)e_{\t}=e_{\t}-e_{\t_1}+e_{\t_2}-\cdots+(-1)^re_{\t_r}.\] On the other hand,
the action on the wedge product is given by
\begin{align*}
(12)w &=
 \bigl( (e_{i_1} - e_1)-(e_2-e_1) \bigr) \wedge \cdots \wedge  \bigl( (e_{i_r} - e_1)-(e_2-e_1)
\bigr) \\
&=(e_{i_1} - e_1) \wedge \cdots \wedge (e_{i_r} - e_1)\\
&\quad\; +\sum_{a=1}^r(-1)^a(e_{2}-e_1)\wedge (e_{i_1}-e_1)\wedge\cdots\wedge\widehat{(e_{i_{a}}-e_1)}\wedge\cdots\wedge(e_{i_r}-e_1).
\end{align*}
The proof is now complete.
\end{proof}

We now introduce some further ideas from \cite{Hamernik} and basic simplicial homology.
For $r \in \N$ define
$\delta_r : \bigwedge^r \EE \rightarrow \bigwedge^{r-1} \EE$ by
\[ \delta_r (e_{i_1} \wedge \cdots \wedge e_{i_r}) =
\sum_{a=1}^r (-1)^{a-1} e_{i_1} \wedge \cdots \wedge \widehat{e_{i_a}} \wedge \cdots
e_{i_r}. \]
The subscript $r$ in the map $\delta_r$ will be omitted when it is clear from the context.
By definition $\bigwedge^0 \EE = K$. We leave it to the reader to verify the relation
\begin{equation}\label{eq:delta}
(e_{i_1} - e_j) \wedge \cdots \wedge (e_{i_r} - e_j) =
\delta(e_j \wedge e_{i_1} \wedge \cdots \wedge e_{i_r})
\end{equation}
for $1 \le j, i_1, \ldots, i_r \le n$.

\newcommand{\ru}[1]{\stackrel{\raisebox{2pt}{$\scriptstyle #1$}}{\longrightarrow}}

\begin{proposition}[Long exact sequence]\label{prop:longExact}
The sequence
\[ 0 \longrightarrow \bigwedge^n \EE \ru{\delta_n} \cdots \ru{\delta_{r+1}} \bigwedge^r \EE \ru{\delta_{r}}
\bigwedge^{r-1} \EE \ru{\delta_{r-1}} \cdots \rightarrow \EE \ru{\delta_1} K \longrightarrow 0 \]
is exact. Moreover if $1\le r < n$ then $\ker \delta_r = \im \delta_{r+1} = \bigwedge^r S^{(n-1,1)}$.
\end{proposition}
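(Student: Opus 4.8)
The plan is to recognize this as the augmented exterior (Koszul-type) complex of the free module $\EE$ and to prove exactness by a standard contracting-homotopy argument, then to read off the kernel/image identification from Proposition~\ref{prop:hookIso} together with relation~\eqref{eq:delta}. First I would check that $\delta$ is a differential, i.e. $\delta_r \circ \delta_{r+1} = 0$: this is the routine sign cancellation familiar from simplicial homology, where each term $e_{i_1}\wedge\cdots\wedge\widehat{e_{i_a}}\wedge\cdots\wedge\widehat{e_{i_b}}\wedge\cdots$ appears twice with opposite signs. So we do have a complex, and in particular $\im\delta_{r+1}\subseteq\ker\delta_r$ for all $r$.

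For exactness I would exhibit an explicit contracting homotopy. Fix the basis vector $e_1$ and define $h_r : \bigwedge^r \EE \to \bigwedge^{r+1}\EE$ by $h_r(x) = e_1 \wedge x$ on the subspace spanned by wedges not involving $e_1$, extended suitably (the cleanest formulation: $h_r(e_1\wedge y) = 0$ and $h_r(y) = e_1\wedge y$ when $y$ is a wedge of the $e_i$ with $i\neq 1$). A direct computation gives $\delta_{r+1} h_r + h_{r-1}\delta_r = \mathrm{id}$ on $\bigwedge^r\EE$ for $r\geq 1$, and at the bottom one checks that $\delta_1 : \EE \to K$ is surjective (it sends each $e_i$ to $1$) with kernel killed appropriately by $h_0$. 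Since a complex of $K$-modules admitting a contracting homotopy is exact, the first assertion follows. (One must be slightly careful at the two ends of the complex—the top term $\bigwedge^n\EE$ and the augmentation $K$—but these boundary cases are handled by inspecting $\delta_n$, which is injective since $\bigwedge^n\EE$ is free of rank one generated by $e_1\wedge\cdots\wedge e_n$ and $\delta_n$ of this is a nonzero alternating sum.)

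It remains to identify $\ker\delta_r = \im\delta_{r+1}$ with $\bigwedge^r S^{(n-1,1)}$ for $1\le r<n$. Equality $\ker\delta_r=\im\delta_{r+1}$ is exactness, just proved. For the identification with $\bigwedge^r S^{(n-1,1)}$, I would use relation~\eqref{eq:delta} with $j=1$: it shows that every basic wedge $(e_{i_1}-e_1)\wedge\cdots\wedge(e_{i_r}-e_1)$ with $1<i_1<\cdots<i_r\le n$ lies in $\im\delta_{r+1}$, and by Proposition~\ref{prop:hookIso}(i) these span $\bigwedge^r S^{(n-1,1)}$, giving $\bigwedge^r S^{(n-1,1)}\subseteq\im\delta_{r+1}$. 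For the reverse inclusion, note $\im\delta_{r+1}$ is spanned by the elements $\delta(e_j\wedge e_{i_1}\wedge\cdots\wedge e_{i_r})$; using multilinearity and the antisymmetry of the wedge one can always arrange (after reindexing) that the first slot is $e_1$, or reduce to that case by subtracting, so that every element of $\im\delta_{r+1}$ is a $K$-combination of the wedges in~\eqref{eq:delta} with $j=1$, hence lies in $\bigwedge^r S^{(n-1,1)}$. A dimension/rank count via Proposition~\ref{prop:hookIso}(i) then confirms equality.

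The main obstacle I anticipate is not conceptual but bookkeeping: getting the contracting homotopy and its boundary relation exactly right, with correct signs and correct treatment of the two ends of the complex, and making the $\im\delta_{r+1} = \bigwedge^r S^{(n-1,1)}$ argument clean rather than relying on a dimension count (which is fine over a field but the proposition is stated over an arbitrary commutative ring $K$, so one should argue directly with spanning sets and the freeness statement in Proposition~\ref{prop:hookIso}(i) rather than invoking dimensions).
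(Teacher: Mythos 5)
Your proposal is correct and takes essentially the same route as the paper: the paper's proof uses exactly your contracting homotopy $e_{i_1}\wedge\cdots\wedge e_{i_r}\mapsto e_1\wedge e_{i_1}\wedge\cdots\wedge e_{i_r}$ (which automatically vanishes when $i_1=1$) to show the complex is null-homotopic and hence exact, and then identifies $\im\delta_{r+1}$ with $\bigwedge^r S^{(n-1,1)}$ from Proposition~\ref{prop:hookIso}(i) together with~\eqref{eq:delta}. Your closing caution about avoiding a rank count over a general commutative ring is well taken; the cleanest version of the reverse inclusion is to apply~\eqref{eq:delta} with $j=i_1$ and note each factor $e_{i_a}-e_{i_1}=(e_{i_a}-e_1)-(e_{i_1}-e_1)$ lies in $S^{(n-1,1)}$.
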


\begin{proof} The $K$-linear map defined by
$e_{i_1} \wedge \cdots \wedge e_{i_r} \mapsto e_1 \wedge e_{i_1} \wedge \cdots
\wedge e_{i_r}$ defines a homotopy equivalence between the identity map on the sequence above and the zero map. This shows that the sequence is null-homotopic and hence exact.
Proposition~\ref{prop:hookIso}(i) and Equation~\eqref{eq:delta} imply 
that $\im \delta_{r+1}$ is equal to $\bigwedge^r S^{(n-1,1)}$.
\end{proof}

We remark that there is an important homological interpretation of
Proposition~\ref{prop:longExact}.
Let $e_1, \ldots, e_n$ be the canonical basis of $\R^n$ and fix an $(n-1)$-simplex in
$\R^n$ with geometric vertices $e_1,\ldots,e_n$.
The wedge product $e_{i_1} \wedge \cdots
\wedge e_{i_r}$ can then be identified with the oriented $(r-1)$-simplex
with vertices, in order, $e_{i_1}, \ldots, e_{i_r}$.
For each $r \ge 2$,
the map $\delta_r:
\bigwedge^r \EE \rightarrow \bigwedge^{r-1} \EE$ is the boundary map from simplicial
homology, acting on oriented $(r-1)$-simplices.
Replace the map $\delta_1 : \EE \rightarrow K$
with the zero map $\EE \rightarrow 0$. Then
Proposition~\ref{prop:longExact}
is equivalent to the fundamental result
that the solid $(n-1)$-simplex has trivial homology (with coefficients in $K$)
in all non-zero degrees, and its
zero homology group is $\EE / \ker \delta_1 \cong K$.




We end this section with some further notation and results that are used in
Section~\ref{sec:proofelemii}. Let $r \in \N$. Let
\[ I^{(r)} = \{ (i_1, \ldots, i_r) : 1 \le i_1 < \cdots < i_r \le n \}. \]
We call the elements of $I^{(r)}$ \emph{multi-indices}.
Let
\[ J^{(r)} = \{ \mbf{i} \in I^{(r)} : i_1 > 1 \}. \]
For $\mbf{i} \in I^{(r)}$, 
let $e_\mbf{i}=e_{i_1}\wedge\cdots\wedge e_{i_r}$.
We say that $\{e_\mbf{i} : \mbf{i} \in I^{(r)}\}$ is the \emph{monomial basis} of $\bigwedge^r \EE$.
By \eqref{eq:delta}, the set
\begin{equation}
\label{eq:std}
\bigl\{ \delta\bigl( e_1 \wedge e_\mbf{j} \bigr) : \mbf{j} \in J^{(r)} \bigl\} 
\end{equation}
is a $K$-basis for $\bigwedge^r S^{(n-1,1)}$, corresponding under the isomorphism
in Proposition~\ref{prop:hookIso} to the standard basis of $S^{(n-r,1^r)}$.
The following lemma gives a very useful way to express elements of $\bigwedge^r S^{(n-1,1)}$
in this basis. 

\begin{lemma}[Rewriting Lemma]\label{lemma:rewrite}
Let $u = \sum_{\mbf{i} \in I^{(r)}} \mu_\mbf{i} e_\mbf{i}  \in \bigwedge^r S^{(n-1,1)}$. Then
\[ u = \sum_{\mbf{j} \in J^{(r)}} \mu_\mbf{j} \delta( e_1 \wedge e_\mbf{j} ). \]
\end{lemma}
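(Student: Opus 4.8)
The plan is to exploit the fact that the monomial basis $\{e_\mbf{i} : \mbf{i} \in I^{(r)}\}$ of $\bigwedge^r \EE$ decomposes $I^{(r)}$ into the disjoint union $J^{(r)} \sqcup (I^{(r)} \setminus J^{(r)})$, where $\mbf{i} \in I^{(r)} \setminus J^{(r)}$ precisely when $i_1 = 1$. First I would write, for $\mbf{j} = (j_1, \ldots, j_r) \in J^{(r)}$,
\[ \delta(e_1 \wedge e_\mbf{j}) = e_\mbf{j} - \sum_{a=1}^{r} (-1)^{a-1} e_1 \wedge e_{j_1} \wedge \cdots \wedge \widehat{e_{j_a}} \wedge \cdots \wedge e_{j_r}, \]
which follows directly from the definition of $\delta$; the crucial point is that the leading term $e_\mbf{j}$ lies in $J^{(r)}$ (all its indices exceed $1$) while every other term involves $e_1$ and so lies in $I^{(r)} \setminus J^{(r)}$. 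Consequently, if we set $v = \sum_{\mbf{j} \in J^{(r)}} \mu_\mbf{j} \delta(e_1 \wedge e_\mbf{j})$, then the coefficient of $e_\mbf{j}$ in $v$, for each $\mbf{j} \in J^{(r)}$, is exactly $\mu_\mbf{j}$, matching the coefficient of $e_\mbf{j}$ in $u$.

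Next I would invoke Proposition~\ref{prop:longExact}, or rather the remark following \eqref{eq:std}, that $\{\delta(e_1 \wedge e_\mbf{j}) : \mbf{j} \in J^{(r)}\}$ is a $K$-basis of $\bigwedge^r S^{(n-1,1)}$. Since $u \in \bigwedge^r S^{(n-1,1)}$ by hypothesis, $u$ can be written uniquely as $\sum_{\mbf{j} \in J^{(r)}} \nu_\mbf{j} \delta(e_1 \wedge e_\mbf{j})$ for some coefficients $\nu_\mbf{j} \in K$. Applying the coefficient-extraction from the previous paragraph to this expression shows that the coefficient of $e_\mbf{i}$ in $u$, for $\mbf{i} \in J^{(r)}$, equals $\nu_\mbf{i}$. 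But by hypothesis this same coefficient is $\mu_\mbf{i}$. Hence $\nu_\mbf{j} = \mu_\mbf{j}$ for all $\mbf{j} \in J^{(r)}$, which is exactly the claimed identity.

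I do not expect any serious obstacle: the argument is essentially bookkeeping with the monomial basis, and the one thing to be careful about is the observation that passing from the $\delta(e_1 \wedge e_\mbf{j})$-basis to the monomial basis is "unitriangular" with respect to the splitting $I^{(r)} = J^{(r)} \sqcup (I^{(r)}\setminus J^{(r)})$ — that is, each $\delta(e_1 \wedge e_\mbf{j})$ contributes $\mu_\mbf{j}$ to the $e_\mbf{j}$-coordinate and nothing to any other $e_{\mbf{j}'}$-coordinate with $\mbf{j}' \in J^{(r)}$. This already pins down the $J^{(r)}$-coordinates of $u$ uniquely, and there is no need to analyse the $e_1$-terms at all. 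An alternative, slightly slicker phrasing would be to note that the projection $\pi : \bigwedge^r \EE \to \bigwedge^r \EE$ onto the span of $\{e_\mbf{j} : \mbf{j} \in J^{(r)}\}$ (killing all monomials containing $e_1$) sends $\delta(e_1 \wedge e_\mbf{j})$ to $e_\mbf{j}$, hence restricts to an isomorphism on $\bigwedge^r S^{(n-1,1)}$; then $u$ and $\sum_{\mbf{j}} \mu_\mbf{j}\delta(e_1 \wedge e_\mbf{j})$ have the same image under $\pi$, so they are equal. Either way the proof is short.
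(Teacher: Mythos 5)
Your proof is correct and is essentially the paper's own argument: express $u$ in the basis $\{\delta(e_1\wedge e_\mbf{j}) : \mbf{j}\in J^{(r)}\}$ from~\eqref{eq:std} and observe that every monomial summand of $\delta(e_1\wedge e_\mbf{j})$ other than $e_\mbf{j}$ itself involves $e_1$, so the coefficient of $e_\mbf{j}$ pins down $\nu_\mbf{j}=\mu_\mbf{j}$. The sign bookkeeping and the projection reformulation you add are fine but not needed beyond this observation.
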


\begin{proof}
By~\eqref{eq:std} we may write
\begin{equation*}
u  = \sum_{\mbf{j} \in J^{(r)}} \nu_\mbf{j} \delta(e_1 \wedge e_\mbf{j})
\end{equation*}
for some $\nu_\mbf{j} \in K$. 
Each of the monomial summands in $\delta(e_1 \wedge e_\mbf{j})$ involves $e_1$, with the
sole exception of the first summand $e_\mbf{j}$. Therefore
the coefficient of the monomial $e_\mbf{j}$ in the right-hand side of the above
equation is $\nu_\mbf{j}$. Hence $\nu_\mbf{j} = \mu_\mbf{j}$, as required.
\end{proof}


Finally we note that if
 $u \in \bigwedge^r \EE$ and  $v \in \bigwedge^s \EE$, where $r$, $s\in \N$, then
\begin{equation}\label{eq:deltaProd}
\delta( u \wedge v) = \delta(u) \wedge v + (-1)^r u \wedge \delta(v).
\end{equation}

%


\section{The Brauer homomorphism and monomial modules}
\label{sec:Brauer}

Throughout this section let
$F$ be a field of prime characteristic $p$ and let~$G$ be a finite group.

\subsection{The Brauer homomorphism}
Let~$V$ be an $FG$-module.
For $Q \le G$, define
\[ V^Q = \{ v \in V : \text{$\sigma v = v$ for all $\sigma \in Q$} \}. \]
For $R \leq Q \leq G$, the \emph{relative trace} map
$\Tr_{R}^{Q} : V^{R} \rightarrow V^{Q}$ is the linear map defined by
\[ \Tr_{R}^{Q}(v) = \sum_{\sigma} \sigma v \]
where the sum is over a set of coset representatives for $Q/R$.
The \emph{Brauer kernel}
of $V$ with respect to $Q$ is the subspace
$\sum_{R < Q} \Tr_R^Q V^R$
of $V^Q$.
The \emph{Brauer quotient} of $V$ with respect to $Q$
is
\[ V(Q)=V^Q / \sum_{R < Q} \Tr_R^Q V^R. \]
It is easy to show that $V^Q$ and
$\sum_{R < Q} \Tr_R^Q V^R$ are both $\mathrm{N}_{G}(Q)$-invariant, and so $V(Q)$
is a module for $F\mathrm{N}_{G}(Q)/Q$.

The following proposition
 is proved in \cite[(1.3)]{BrouePPerm}.

\begin{proposition}\label{prop:Brauer}
Let $V$ be an indecomposable $FG$-module. Let $Q$ be a $p$-subgroup of~$G$.
If $V(Q) \not= 0$ then $V$ has a vertex containing $Q$.
\end{proposition}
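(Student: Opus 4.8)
The plan is to prove Proposition~\ref{prop:Brauer} by a direct vertex-theoretic argument: if $V$ has vertex $P$, then $V$ is relatively $P$-projective, so I will show that if additionally $V(Q)\ne 0$ then $Q$ must be contained in (a conjugate of) $P$. First I would recall the elementary module-theoretic facts underlying the Brauer quotient. For a $p$-subgroup $R\le Q$, relative projectivity of $V$ with respect to $R$ is equivalent, by Higman's criterion, to $V^R = \Tr^R_1 \End_{FR}(V)\cdot$ acting on $V$ — more precisely to the identity map lying in the image of $\Tr^Q_R$ on $\End_F(V)^R$; I would state the version I need: if $V$ is relatively $R$-projective then $V^Q = \Tr^Q_R V^R$. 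Granting this, if $V$ has vertex $P$ and $Q$ is \emph{not} contained in any $G$-conjugate of $P$, then in particular $Q\cap \u gP$ is a proper subgroup of $Q$ for every $g\in G$, and I want to conclude $V(Q)=0$.

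The key step is a Mackey-style decomposition. Since $V$ is relatively $P$-projective, $V$ is a direct summand of $(V\res_P)\ind^G$. Restricting to $Q$ and applying the Mackey formula, $(V\res_P)\ind^G \res_Q$ is a direct sum of modules of the form $\bigl(\u gV \res_{Q\cap \u gP}\bigr)\ind^Q$ as $g$ ranges over $Q\backslash G / P$. Each such summand is relatively $(Q\cap \u gP)$-projective as an $FQ$-module, and under the hypothesis that $Q$ is not subconjugate to $P$ every subgroup $Q\cap \u gP$ is proper in $Q$. Hence $V\res_Q$ is a direct summand of a module that is relatively $R$-projective for the collection of proper subgroups $R=Q\cap \u gP < Q$; in particular $V\res_Q$ is relatively $\{\text{proper subgroups of }Q\}$-projective, so $V^Q = \sum_{R<Q}\Tr^Q_R V^R$, i.e. $V(Q)=0$. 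This is the contrapositive of the statement, so the proposition follows.

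I expect the main obstacle to be packaging the relative projectivity bookkeeping cleanly: one must be careful that "relatively $R$-projective for various proper $R$" really does give $V^Q=\sum_{R<Q}\Tr^Q_R V^R$, which uses that if $V$ is a summand of $W$ with splitting maps that are $FQ$-homomorphisms, then the Brauer quotient $V(Q)$ is a direct summand of $W(Q)$ — i.e. that $V\mapsto V(Q)$ is an additive functor on $FQ$-modules that kills relatively-proper-projective modules. The cleanest route is to note $V(Q)$ is a functor (the fixed-point functor and the relative trace are both functorial, and a split surjection of $FQ$-modules stays split after applying each), and that $\bigl(W\ind^Q\bigr)(Q)=0$ whenever $W$ is an $FR$-module with $R<Q$, since every element of $(W\ind^Q)^Q$ already lies in $\Tr^Q_R\bigl((W\ind^Q)^R\bigr)$. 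Alternatively, one can simply cite \cite[(1.3)]{BrouePPerm} verbatim, but since the statement is short I would include the above argument for completeness. No delicate calculation is involved; the only real care is in invoking the Mackey formula over a double-coset set that may be infinite only notionally (it is finite here) and in checking the functoriality claims, both of which are routine.
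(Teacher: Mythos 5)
Your argument is correct. Note, however, that the paper does not prove this proposition at all: it simply cites Brou\'e \cite[(1.3)]{BrouePPerm}, so what you have written is a self-contained replacement for that citation rather than a parallel to an argument in the text. Your route is the standard one and it works: writing $\mathrm{id}_V=\Tr^G_P(\psi)$ (or equivalently $V\mid (V\res_P)\ind^G$) for a vertex $P$, applying the Mackey formula to $V\res_Q$, and observing that each piece $\bigl(\u{g}V\res_{Q\cap \u{g}P}\bigr)\ind^Q$ with $Q\cap \u{g}P<Q$ has vanishing Brauer quotient at $Q$ because every $Q$-fixed vector of a module induced from $R<Q$ already lies in $\Tr^Q_R$ of the $R$-fixed vectors; since $V(Q)$ depends only on $V\res_Q$ and the Brauer construction is additive over direct sums, $V(Q)=0$ follows, giving the contrapositive. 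The two functoriality points you flag (additivity of $M\mapsto M(Q)$ over $FQ$-direct summands, and $(U\ind^Q_R)(Q)=0$ for $R<Q$) are exactly the places where care is needed, and both are routine as you say; the second is in fact visible in the paper as the special case treated in Proposition~\ref{prop:monomial}(iii)--(iv). What the citation buys the paper is brevity; what your proof buys is that the reader sees why the hypothesis $V(Q)\neq 0$ forces $Q$ to be subconjugate to a vertex, and also why the converse can fail for modules that are not $p$-permutation modules (the induced pieces with $Q\cap\u{g}P=Q$ need not contribute nonzero fixed points modulo traces).
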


The converse to Proposition~\ref{prop:Brauer} is true when $V$ has the
trivial module as its source, or, equivalently, if $V$ is a direct summand of a permutation module.
In general it is false.
For example, calculations using {\sc Magma} show
that if $p=2$ and $P$ is a Sylow $2$-subgroup of $\S_{6}$
then $S^{(4,1,1)}(P) = 0$.
But by \cite[Theorems~4.3, 4.5]{MurphyPeel}, $S^{(4,1,1)}$
is indecomposable with vertex $P$.

\subsection{Monomial modules}
Let $H$ be a subgroup of $G$ and let $\theta : H \rightarrow F^\times$ be
a representation of $H$ such that $\theta(h) = 1$ whenever $h \in H$ is a $p$-element.
Let $\langle v \rangle$ be the corresponding $FH$-module.
Let $V= FG \otimes_{FH} \langle v \rangle$
be the monomial module induced from $\langle v \rangle$.
If $P$ is a Sylow $p$-subgroup of $H$ then~$\langle v \rangle$ has $P$ as a vertex, 
and so $\langle v \rangle$ is a direct summand
of $\langle v \rangle \res_P \,\, \ind^H = F\ind_P^H$.
Hence $V$ is a $p$-permutation module.
The following proposition establishes a close connection between $V$ and
the permutation module $F\ind_H^G$.

\begin{proposition}\label{prop:monomial}
Let $W = \langle e_{\gamma H} : \gamma \in G\rangle$ be the permutation module of $G$
acting on the cosets of $H$. Let $Q$ be a $p$-subgroup of $G$.
Let  $\gamma_1H, \ldots, \gamma_LH$
be representatives for the orbits of $Q$ on $G/H$.
For each $\ell$ let $s_\ell = \sum_\tau \tau \gamma_\ell \otimes v$
where the sum is over a set of coset representatives $\tau$ for
$Q/\Stab_Q (\gamma_\ell H)$.
Then

\begin{enumerate}
\item [(i)] the map defined by $\alpha \gamma_\ell \otimes v \mapsto \alpha e_{\gamma_\ell H}$ for $\alpha \in Q$ is
an isomorphism $V \res_Q\; \cong W\res_Q$,

\item [(ii)] $s_\ell = \Tr_{\Stab_Q (\gamma_\ell H)}^Q (\gamma_\ell \otimes v)$,

\item [(iii)] $\{s_\ell : 1 \le \ell \le L\}$ is a basis for $V^Q$,

\item [(iv)] if $R \le Q$ then $\{s_\ell : 1 \le \ell \le L, \Stab_Q (\gamma_\ell H)
= \Stab_R (\gamma_\ell H) \}$ is a basis for $\Tr_R^Q V^R$.
\end{enumerate}
\end{proposition}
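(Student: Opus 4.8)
The plan is to prove the four parts in the order stated, since each feeds into the next. The key structural observation is that $V = FG \otimes_{FH} \langle v \rangle$ has $F$-basis $\{\gamma H \otimes v : \gamma H \in G/H\}$ after choosing coset representatives, and $Q$ permutes this basis \emph{up to scalars}; the point is that these scalars are all trivial because $\theta$ kills $p$-elements. Concretely, for part (i), fix the orbit representatives $\gamma_1 H, \ldots, \gamma_L H$ and for each $\ell$ choose coset representatives $\tau$ for $Q/\Stab_Q(\gamma_\ell H)$, so that $\{\tau \gamma_\ell H : \ell, \tau\}$ ranges over all of $G/H$. For $\alpha \in Q$ and a basis element $\tau \gamma_\ell \otimes v$, we have $\alpha \tau \gamma_\ell = \tau' \gamma_\ell h$ for some other representative $\tau'$ in the same $Q$-orbit and some $h \in \Stab_H$-type element; since $\tau' \gamma_\ell$ and $\alpha\tau\gamma_\ell$ lie in the same coset of $H$, and $\Stab_Q(\gamma_\ell H)$ has $p$-power order (it is a subgroup of the $p$-group $Q$), the element $\gamma_\ell^{-1} (\tau')^{-1}\alpha\tau \gamma_\ell \in H$ is a $p$-element, hence $\theta$ of it is $1$. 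So $\alpha(\tau\gamma_\ell \otimes v) = \tau'\gamma_\ell \otimes v$ with no scalar, which is exactly the permutation action of $Q$ on $W\res_Q$ under the stated correspondence. This gives the $FQ$-isomorphism.

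Part (ii) is immediate from the definition of the relative trace: $\Tr_{\Stab_Q(\gamma_\ell H)}^Q(\gamma_\ell \otimes v) = \sum_\tau \tau(\gamma_\ell \otimes v) = \sum_\tau \tau\gamma_\ell \otimes v = s_\ell$, using that the scalar issue has been disposed of in (i) (or directly, since $\tau$ runs over coset representatives for $Q$ modulo the stabilizer, the action is honestly permutation-like). For part (iii), transport the question through the isomorphism of (i): under $V\res_Q \cong W\res_Q$, the element $s_\ell$ corresponds to $\sum_\tau \tau e_{\gamma_\ell H}$, the orbit-sum over the $Q$-orbit of $\gamma_\ell H$. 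It is standard that the orbit sums form an $F$-basis of $W^Q$ for a permutation module $W$: they are clearly $Q$-fixed and linearly independent (disjoint supports), and any $Q$-fixed vector must be constant on each orbit, hence a combination of orbit sums. So $\{s_\ell\}$ is a basis for $V^Q$.

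Part (iv) is the one requiring a little more care, and I expect it to be the main (mild) obstacle. Again work inside $W\res_Q$. We must identify $\Tr_R^Q W^R$ inside $W^Q$. A basis of $W^R$ is given by the $R$-orbit sums; applying $\Tr_R^Q$ to the $R$-orbit sum of a point $\gamma H$ gives a scalar multiple of the $Q$-orbit sum of $\gamma H$, and one computes the scalar to be $[Q : R\cdot\Stab_Q(\gamma H)] \cdot (\text{something})$ — more precisely, summing over $Q/R$ coset representatives collapses the $R$-orbit sum to $\lvert\Stab_Q(\gamma H) : \Stab_R(\gamma H)\rvert$ times the $Q$-orbit sum, because each $Q$-orbit point is hit exactly $[\Stab_Q(\gamma H):\Stab_R(\gamma H)]$ times. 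Since everything is a $p$-group, this multiplicity is a power of $p$, hence zero in $F$ unless $\Stab_Q(\gamma H) = \Stab_R(\gamma H)$, in which case it equals $1$. Therefore $\Tr_R^Q W^R$ is spanned exactly by the $Q$-orbit sums of those orbit representatives $\gamma_\ell H$ with $\Stab_Q(\gamma_\ell H) = \Stab_R(\gamma_\ell H)$; these orbit sums are linearly independent, so they form a basis. Translating back through (i)–(iii), this is precisely the asserted basis $\{s_\ell : \Stab_Q(\gamma_\ell H) = \Stab_R(\gamma_\ell H)\}$ of $\Tr_R^Q V^R$. The only genuinely delicate point throughout is keeping the bookkeeping of coset representatives consistent between $Q/R$, $Q/\Stab_Q$, and $R/\Stab_R$, and confirming that the relevant multiplicities are all $p$-powers so that they vanish in characteristic $p$ exactly when the stabilizers coincide.
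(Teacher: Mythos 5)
Your proof is correct and follows essentially the same route as the paper's: orbit sums give bases of the fixed-point spaces, and the relative trace of an $R$-orbit sum is $[\Stab_Q(\gamma H):\Stab_R(\gamma H)]$ times the corresponding $Q$-orbit sum, which vanishes in characteristic $p$ exactly when the stabilizers differ. The only cosmetic difference is in part (i), where the paper invokes Mackey's induction/restriction formula together with the observation that ${}^{\gamma}\theta$ restricts trivially to ${}^{\gamma}H\cap Q$, whereas you verify the same fact by a direct coset computation showing the scalar $\theta\bigl(\gamma_\ell^{-1}(\tau')^{-1}\alpha\tau\gamma_\ell\bigr)$ equals $1$ because its argument is a $p$-element.
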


\begin{proof}
Part (i) follows from Mackey's induction/restriction formula on noting
that if $\gamma \in G$ then
the restriction of $\u\gamma \theta$ to $\u \gamma H\cap Q$
is trivial.
The orbit sums of $Q$ acting on the canonical permutation
basis of $W$ are a basis for~$W^Q$. Since the orbit sum containing $e_{\gamma_\ell H}$ is
$\Tr_{\Stab_Q (\gamma_\ell H)}^Q e_{\gamma_\ell H}$, parts (ii) and (iii) follow from (i).
For (iv), let $\gamma \in G$ and note that by (iii) the orbit sum of~$R$
containing~$e_{\gamma H}$ is $w = \Tr_{\Stab_R (\gamma H)}^R e_{\gamma H}$.
If $\Stab_R (\gamma H) = \Stab_Q (\gamma H)$ then
$\Tr_R^Q w = \Tr_{\Stab_Q(\gamma H)}^Q e_{\gamma H}$ is the orbit sum of $Q$ containing
$e_{\gamma H}$. On the other hand,
if $\Stab_R (\gamma H) < \Stab_Q (\gamma H)$ then
\[ \Tr_R^Q w = \Tr_{\Stab_Q(\gamma H)}^Q \Tr_{\Stab_R(\gamma H)}^{\Stab_Q(\gamma H)} e_{\gamma H}
           = \Tr_{\Stab_Q(\gamma H)}^Q 0 = 0.\]
Part (iv) now follows from the isomorphism in (i).
\end{proof}

\section{Generic Jordan types of modules}
\label{sec:gJt}

Let $E=\langle g_1,\ldots,g_n\rangle$ be an elementary abelian $p$-group of $p$-rank $n$ and
let $F$ be an algebraically closed field of characteristic $p$.
Let $M$ be a finite-dimensional $FE$-module. Let $K = F(\alpha_1,\ldots,\alpha_n)$
where $\alpha_1, \ldots,\alpha_n$ are indeterminates.  
With respect to a basis for $M$,
the matrix representing the action of the element
\[1+\alpha_1(g_1-1)+\cdots+\alpha_n(g_n-1)\]
on $M$ has order $p$.
If $[r]$ denotes a unipotent Jordan block of dimension $r$ then the
Jordan type of this matrix is $[1]^{s_1}\cdots [p]^{s_p}$ for some $s_i \in \N_0$.
By Wheeler \cite{Wheeler},
this Jordan type is independent of the choice of the generators of $E$.
It is called the \emph{generic Jordan type} of the $FE$-module $M$. The \emph{stable generic Jordan type} of $M$ is
$[1]^{s_1}\cdots[p-1]^{s_{p-1}}$.
The module $M$ is \emph{generically free}
if $s_1=\cdots=s_{p-1}=0$. For further background on  generic Jordan type,
we refer the reader to \cite{Friedlander-Pevtsova-Suslin}.

We summarize below the properties which we need.

\begin{proposition}\label{prop:sgJt} Suppose that $E$ is an elementary abelian $p$-group of finite order.
\begin{enumerate}
  \item [(i)]
  The generic Jordan type of a direct sum of modules
  is the direct sum of the generic Jordan types of the modules.
  \item [(ii)] Let $0\to M_1\to M_2\to M_3\to 0$ be a short exact sequence of $FE$-modules.
  \begin{enumerate}
  \item [(a)] If $M_2$ is generically free then $M_1$ has stable generic Jordan type $[1]^{s_1}\cdots[p-1]^{s_{p-1}}$ if and only if $M_3$ has stable generic Jordan type $[1]^{s_{p-1}}\cdots[p-1]^{s_{1}}$.
  \item [(b)] If $M_3$ is generically free then $M_1$
  and $M_2$ have the same stable generic Jordan type.
  \item [(c)] If $M_1$ is generically free then $M_3$ and $M_2$ have the same
   stable generic Jordan type. 

  \end{enumerate}
  \item [(iii)] Let $D$ be a proper subgroup of $E$ and let $U$ be a $FD$-module. Then the induced module $U\ind^E$ is generically free.
  \item [(iv)] Let $G$ be a finite, not necessarily elementary abelian, supergroup of~$E$
  and let $V$ be an indecomposable $FG$-module.
  If $V\res_E$ is not generically free then $V$ has a vertex containing the subgroup $E$.
\end{enumerate}
\end{proposition}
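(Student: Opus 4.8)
The plan is to extend scalars to $K = F(\alpha_1,\ldots,\alpha_n)$ throughout and to work with the unit $u = 1 + \alpha_1(g_1-1) + \cdots + \alpha_n(g_n-1)$ of $KE$. Since the $g_i-1$ commute and each has $p$-th power zero, $u^p = 1$, so $\langle u \rangle$ is cyclic of order $p$ and $A := K\langle u\rangle \cong K[T]/(T^p)$ with $T = u-1$; by definition the (stable) generic Jordan type of an $FE$-module $M$ records the isomorphism type of the $A$-module $M\otimes_F K$, with the summands $[p]$ discarded in the stable version. I would base everything on four standard facts about $A$: its indecomposable modules are $[1],\ldots,[p]$; $[p]=A$ is the unique projective (equivalently, injective) indecomposable; the syzygy functor $\Omega$ satisfies $\Omega[i]\cong[p-i]$ for $1\le i<p$ and is an involution on the stable category; and an $A$-module $N$ is free if and only if $\dim_K N = p\dim_K(N/TN)$. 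I would also use that $-\otimes_F K$ is exact.

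Part (i) is immediate: a basis adapted to the decomposition makes the matrix of $u$ block diagonal, so generic Jordan types add; in particular every direct summand of a generically free module is generically free, which I will reuse in (iv). For part (ii) I would tensor the given sequence with $K$ to obtain a short exact sequence of $A$-modules $0 \to M_1\otimes K \to M_2\otimes K \to M_3\otimes K \to 0$. In (b) and (c) one outer term is projective, respectively injective, so the sequence splits and the remaining two terms agree up to $[p]$-summands, which settles those cases. In (a) the middle term is projective, so by Schanuel's lemma $M_1\otimes K$ equals $\Omega(M_3\otimes K)$ up to $[p]$-summands; applying $\Omega$ and using $\Omega[i]\cong[p-i]$ converts a stable type $[1]^{s_1}\cdots[p-1]^{s_{p-1}}$ for $M_1$ into $[1]^{s_{p-1}}\cdots[p-1]^{s_1}$ for $M_3$.

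The real content is part (iii), and this is the step I expect to cost the most care. Because the generic Jordan type is independent of the chosen generators, I may pick $g_1,\ldots,g_n$ so that $D \le D' := \langle g_1,\ldots,g_e\rangle$ for some $e<n$, whence $E = D'\times E''$ with $E'' := \langle g_{e+1},\ldots,g_n\rangle$ nontrivial. Transitivity of induction reduces the claim to the case $D=D'$, and then $U\ind^E\otimes_F K$ is identified with $(U\otimes_F K)\otimes_K KE''$ under $KE \cong KD'\otimes_K KE''$, with $KE''$ carrying the regular action. The crucial — and slightly delicate — bookkeeping step is to verify that under this identification $u-1$ acts as $a\otimes 1 + 1\otimes c$, where $a\in KD'$ and $c\in KE''$ are the corresponding generic elements, so that $a^p=c^p=0$. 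After that the work is short: a linear change of variables shows $KE''$ is free over the subalgebra $K[c]$, so after decomposing $U\otimes_F K$ into Jordan blocks for the action of $a$ it suffices to show, for each block of size $r\le p$, that $K[a,c]/(a^r,c^p)$ is free over $A = K[a+c]/((a+c)^p)$; this holds by the dimension criterion, since $K[a,c]/(a^r,c^p)$ has dimension $rp$ while its quotient by $(a+c)$ is $K[c]/(c^r)$, of dimension $r$. Summing over the blocks shows $U\ind^E\otimes_F K$ is $A$-free, i.e.\ $U\ind^E$ is generically free.

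Finally I would deduce (iv) formally from (i) and (iii) using Mackey's formula. Let $Q$ be a vertex of the indecomposable module $V$; then $V$ is a direct summand of $V\res_Q\ind^G$, so $V\res_E$ is a direct summand of $\bigoplus_g \bigl(\u g(V\res_Q)\res_{Q^g\cap E}\bigr)\ind_{Q^g\cap E}^E$, the sum over $(E,Q)$-double coset representatives $g$. If no $G$-conjugate of $E$ were contained in $Q$, every $Q^g\cap E$ would be a proper subgroup of $E$, so each summand would be generically free by (iii) and hence so would $V\res_E$ by (i), contradicting the hypothesis. Therefore $E\le Q^g$ for some $g$, and $Q^g$ is again a vertex of $V$.
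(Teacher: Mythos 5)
Your proposal is correct, and its overall skeleton matches the paper's proof: part (i) by block-diagonality, part (ii) by restricting the induced exact sequence of $A$-modules and using that free equals projective equals injective over $K[T]/(T^p)$ together with $\Omega[i]\cong[p-i]$ (your Schanuel step is the same computation the paper phrases with $\Omega$ and $\Omega^{-1}$), and part (iv) by exactly the same Mackey argument combined with (i) and (iii). The one place where you take a genuinely different route is part (iii). The paper chooses a complement $C$ with $E=C\times D$, identifies $U\ind^E$ with $F(E/D)\otimes_F U'$, and then quotes the generic freeness of $F(E/D)$ together with the closure of generic freeness under tensoring; the key input (freeness of the regular module over a generic cyclic shifted subgroup) is left to the cited literature. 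You instead prove this from scratch: after reducing to $D=\langle g_1,\ldots,g_e\rangle$ via Wheeler's independence of generators and transitivity of induction, you split the generic nilpotent as $a\otimes 1+1\otimes c$, change variables to see $KE''$ is $K[c]$-free, and verify freeness of each $K[a,c]/(a^r,c^p)$ over $K[a+c]/((a+c)^p)$ by the dimension criterion $\dim N=p\dim(N/TN)$. This costs a page of bookkeeping but makes (iii) self-contained, whereas the paper's version is shorter at the price of deferring the essential fact to \cite{Friedlander-Pevtsova-Suslin}. I see no gaps: the change-of-variables step is legitimate since $(\sum c_iX_i)^p=0$ in characteristic $p$, and the dimension count $rp=p\cdot r$ is correct.
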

\begin{proof} Part (i) follows from \cite[Proposition 4.7]{Friedlander-Pevtsova-Suslin}
and will be used in the proofs of (ii), (iii) and (iv).
Since we do not require these ideas elsewhere in our paper,
we refer the reader to \cite{Friedlander-Pevtsova-Suslin} for details.
We write $[\alpha_K](M)$ and
$[\alpha_K]^\star(M)$ for the generic and stable generic Jordan type of a module~$M$ respectively.

For part (ii), we note that the short exact sequence induces the short exact sequence \[0\to [\alpha_K](M_1)\to [\alpha_K](M_2)\to [\alpha_K](M_3)\to 0.\] For (ii)(a) we have
\[
[\alpha_K]^\star(M_1)\simeq \Omega\bigl( \Omega^{-1}[\alpha_K]^\star(M_1) \bigr)
\simeq\Omega\bigl( [\alpha_K]^\star(M_3) \bigr).\]
Hence the stable generic Jordan types of $M_1$ and $M_3$ are complementary.
For (ii)(b) we note that since
$M_3$ is generically free, the short exact sequence generically splits.
Hence
\[
[\alpha_K]^\star(M_2)\simeq [\alpha_K]^\star(M_1)\oplus [\alpha_K]^*(M_3)=[\alpha_K]^*(M_1).
\]
The proof of (ii)(c) is similar.


There exists a non-trivial subgroup $C$ of $E$ such that $E = C \times D$. We have
$U\ind^E \cong FE \otimes_{FD} U \cong F(E/D) \otimes_F U'$ where $U'$ has
the same underlying vector space as $U$ and the action
of $E$ is given by $(hk)v = kv$ for $h \in C$,
$k \in D$ and $v \in U'$. Since $F(E/D)$ is generically free
as an $FE$-module, it follows that $U\ind^E$ is generically free, as required in~(iii).


For (iv), suppose that $V$ has $Q$ as a vertex.
Then there is an $FQ$-module~$M$ such that $V$ is a direct summand of $M\ind^{G}$.
 By the Mackey decomposition formula $(M\ind^{G})\res_E$ is a summand of a direct sum of
the modules $M_g=\left (\u{g}M\res_{E\cap gQg^{-1}}\right )\ind^E$ for some suitable $g\in G$.
If $E\cap gQg^{-1}$ is a proper subgroup of $E$ then $M_g$ is
generically free by (iii). Since $V\res_E$ is direct summand of $(M\ind^G)\res_E$ and,
by hypothesis $V\res_E$ is not generically free, there
exists $g\in G$ such that $E\cap gQg^{-1}=E$. For this $g$ we have $E\subseteq gQg^{-1}$.
\end{proof}

Of independent interest, we mention a direct consequence of Proposition~\ref{prop:sgJt}(iv) about hook Specht modules. Let $n \in \N$ and let $d = \lfloor \frac{n}{p} \rfloor$. Let
\[ \E(d)
= \langle (1,2,\ldots,p),\ldots,((d-1)p+1,(d-1)p+2,\ldots,dp) \rangle
\]
be as defined
in Section~\ref{sec:charS}. The second author
computed the generic Jordan type of
$S^{(n-r,1^r)}\res_{\E(d)}$ in
\cite[Corollary 4.2, Theorem 4.5]{LimComplexity}.
In particular, the restricted module is not generically free. Hence by
Proposition~\ref{prop:sgJt}(iv), we obtain the following result.

\begin{proposition} The hook Specht module $S^{(n-r,1^r)}$ has a vertex containing the maximal elementary abelian $p$-subgroup $\E(d)$
where
$d=\lfloor \frac{n}{p}\rfloor$.
\end{proposition}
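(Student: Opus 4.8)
The statement is an immediate consequence of Proposition~\ref{prop:sgJt}(iv), so the plan is simply to verify its two hypotheses for $G = \S_n$, $E = \E(d)$ and $V = S^{(n-r,1^r)}$. There is no harm in working over an algebraically closed field of characteristic~$p$, since the vertex of an indecomposable module is unchanged under field extension, as recalled at the start of Section~\ref{sec:elemtovertex}. The first hypothesis of Proposition~\ref{prop:sgJt}(iv) is that $V$ is indecomposable: for $p$ odd this is \cite[Corollary~13.18]{James}, and hook Specht modules are indecomposable in characteristic~$2$ as well (see \cite[Section~6]{PeelHooks} and \cite{MurphyPeel}). The only substantive hypothesis to check is that $S^{(n-r,1^r)}\res_{\E(d)}$ is \emph{not} generically free.

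For this I would invoke the second author's computation of the generic Jordan type of $S^{(n-r,1^r)}\res_{\E(d)}$ in \cite[Corollary~4.2, Theorem~4.5]{LimComplexity}; in particular the restricted module is not generically free. In the range $p \nmid \binom{n-1}{r}$ this last point is in fact immediate without the cited formulas, since a generically free module for $\E(d)$ has generic Jordan type $[p]^{s}$ for some~$s$, hence dimension divisible by~$p$, whereas $\dim S^{(n-r,1^r)} = \binom{n-1}{r}$. Granting that $S^{(n-r,1^r)}\res_{\E(d)}$ is not generically free, Proposition~\ref{prop:sgJt}(iv) yields a vertex of $S^{(n-r,1^r)}$ containing~$\E(d)$. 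That $\E(d)$ is a maximal elementary abelian $p$-subgroup of~$\S_n$ follows from Proposition~\ref{prop:max} when $p \mid n$, and in general from the fact that $\mathrm{C}_{\S_n}(\E(d)) = \E(d) \times \S_{n-dp}$ has $\E(d)$ as its Sylow $p$-subgroup, as $n - dp < p$; any elementary abelian $p$-subgroup containing $\E(d)$ lies in this centralizer, hence in $\E(d)$.

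The real work is therefore external to this paper: it is the determination of the generic Jordan type of $S^{(n-r,1^r)}\res_{\E(d)}$, which is a genuine computation because $\E(d)$ has $p$-rank~$d$ and the Specht module is typically of large dimension. If one wanted a self-contained argument, the hard step would be to re-derive just the assertion that $S^{(n-r,1^r)}\res_{\E(d)}$ is not generically free --- for instance by restricting further to a carefully chosen rank-one subgroup of $\E(d)$, or by feeding the long exact sequence of Proposition~\ref{prop:longExact} into the exactness properties of generic Jordan type recorded in Proposition~\ref{prop:sgJt}(ii) in order to locate a Jordan block of size less than~$p$.
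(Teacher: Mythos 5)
Your proposal is correct and is essentially the paper's own proof: the paper likewise simply cites \cite[Corollary 4.2, Theorem 4.5]{LimComplexity} for the fact that $S^{(n-r,1^r)}\res_{\hspace*{1pt}\E(d)}$ is not generically free and then applies Proposition~\ref{prop:sgJt}(iv). The only slip is your parenthetical claim that hook Specht modules are always indecomposable in characteristic $2$ --- this fails for odd $n$, where $S^{(n-r,1^r)}$ can decompose over $\F_2$ --- but it is immaterial here, since by the additivity of generic Jordan type over direct sums (Proposition~\ref{prop:sgJt}(i)) some indecomposable summand of $S^{(n-r,1^r)}$ restricts non-generically-freely to $\E(d)$, and Proposition~\ref{prop:sgJt}(iv) applies to that summand.
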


We end this section
with a result on the generic Jordan type of a monomial module.

\begin{proposition}\label{prop:gJt of monomial mod} Let $G$
be a finite group, let $H \le G$ and let $V=\langle v \rangle \ind_H^G$
be a monomial $FG$-module. If $E$ is an elementary abelian $p$-subgroup of $G$
then $V\res_E$ has generic Jordan type $[1]^s$ where $s$ is the number of
orbits of size $1$ of $E$ acting on $G/H$.
\end{proposition}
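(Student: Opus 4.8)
The plan is to combine Proposition~\ref{prop:monomial}(i), which identifies $V\res_E$ with the permutation module $W\res_E = \langle e_{\gamma H} : \gamma \in G \rangle \res_E$, with Proposition~\ref{prop:sgJt}(i) and (iii). First I would invoke Proposition~\ref{prop:monomial}(i) to replace $V\res_E$ by $W\res_E$; since the generic Jordan type is a module invariant (it only depends on the isomorphism class of the $FE$-module), it suffices to compute the generic Jordan type of $W\res_E$. Then I would decompose $W\res_E$ according to the orbits of $E$ on $G/H$: writing $\gamma_1 H, \ldots, \gamma_L H$ for orbit representatives with stabilizers $D_\ell = \Stab_E(\gamma_\ell H)$, we have $W\res_E \cong \bigoplus_{\ell=1}^L F\ind_{D_\ell}^E$, a direct sum of permutation modules on the individual transitive orbits, each isomorphic to the trivial-module induction from the point stabilizer.

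Next, by Proposition~\ref{prop:sgJt}(i) the generic Jordan type of $W\res_E$ is the sum of the generic Jordan types of the summands $F\ind_{D_\ell}^E$. For each $\ell$: if the orbit has size greater than $1$, then $D_\ell$ is a proper subgroup of $E$, and by Proposition~\ref{prop:sgJt}(iii) the module $F\ind_{D_\ell}^E$ is generically free, hence contributes nothing to the stable generic Jordan type and (being free, i.e.\ a sum of blocks $[p]$) contributes no blocks $[1]$ either. If the orbit has size $1$, then $D_\ell = E$ and the summand is the trivial $FE$-module, whose generic Jordan type is $[1]$. Adding these up, the generic Jordan type of $W\res_E$ — and hence of $V\res_E$ — is $[1]^s$, where $s$ is the number of orbits of size $1$ of $E$ on $G/H$, as claimed.

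I do not expect any serious obstacle here: all the ingredients are already in place. The one point requiring a line of care is the observation that an induced module $F\ind_D^E$ with $D < E$ proper is genuinely \emph{free} as an $FE$-module (so that its generic Jordan type is purely $[p]^{\dim/p}$ with no small blocks at all), which follows because $FE$ is free over the subalgebra $FD$, or alternatively by the argument given in the proof of Proposition~\ref{prop:sgJt}(iii); this is what guarantees that orbits of size $>1$ contribute no $[1]$-blocks, not merely no blocks of size $<p$. The only mild subtlety worth flagging explicitly is the reduction via Proposition~\ref{prop:monomial}(i): that isomorphism is stated as one of $FQ$-modules for an arbitrary $p$-subgroup $Q$, so one simply specializes $Q = E$.
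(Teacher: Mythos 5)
Your proof is correct and follows essentially the same route as the paper: the paper applies the Mackey formula directly to $V\res_E$ to get the orbit decomposition $\bigoplus_g (\u{g}\langle v\rangle\res_{E\cap gHg^{-1}})\ind^E$, whereas you first pass to the permutation module via Proposition~\ref{prop:monomial}(i), but this is the same decomposition in different clothing. The remaining steps — Proposition~\ref{prop:sgJt}(iii) for orbits of size greater than $1$ and Proposition~\ref{prop:sgJt}(i) to assemble the summands — coincide with the paper's argument.
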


\begin{proof}
We have
\[V\Res_E=\bigoplus_{g} (\u{g}\langle v\rangle \Res_{E\hskip1pt\cap\hskip1pt gHg^{-1}})\Ind^E \]
where the sum is over a set of representatives for the orbits of $E$ on $G/H$.
Orbits of size $1$ correspond to
representatives $g$ such that  $E\subseteq gHg^{-1}$.  
In this case,
\[(\u{g}\langle v\rangle \Res_{E\hskip1pt\cap\hskip1pt gHg^{-1}})
\Ind^E=\u{g}\langle v\rangle \Res_E.
\]
Since $\u{g}\langle v\rangle\res_E$ is $1$-dimensional, its generic Jordan type is clearly $[1]$. The result now follows from parts~(i) and~(iii) of Proposition \ref{prop:sgJt}.
\end{proof}

\section{Proof of Theorem~\ref{thm:elem} under hypothesis (i)}
\label{sec:proofelemi}

Let $F$ be an algebraically closed field of prime characteristic $p$.
As in Section~\ref{sec:hook} we let
$\EE = \langle v_1, \ldots, v_{kp}\rangle$
be the $kp$-dimensional
natural permutation module for $\S_{kp}$. Recall from the end of Section~\ref{sec:hook}
that $\bigwedge^r \EE$ has as a basis the elements
$e_{\mathbf{i}} = e_{i_1}\wedge \cdots\wedge e_{i_r}$
for $\mathbf{i}\in I^{(r)}$. Considering the action
of $H = \S_{\{1,\ldots, r\}} \times \S_{\{r+1,\ldots, kp\}}$
on the generator $e_1 \wedge \cdots \wedge e_r$ of $\bigwedge^r \EE$,
we see that
\begin{equation}\label{eqn:monomial}
\bigwedge^r \EE 
 \cong \langle v\rangle\Ind^{\S_n}
\end{equation}
where $\langle v \rangle$ affords the representation $\sgn \boxtimes F$ of $H$.
Hence $\bigwedge^r \EE$ is a monomial module.



\begin{corollary}\label{Cor:sgJt of wedge E} Suppose that $n=kp$. Let $E$
be an elementary abelian $p$-subgroup of $\S_{kp}$.  
Then the stable generic Jordan type of $\left (\bigwedge^r \EE\right )\res_E$ is $[1]^s$ where $s$ is the number of $r$-subsets of $\{1,\ldots,kp\}$ fixed by $E$ in its
action on all $r$-subsets of $\{1,\ldots,kp\}$.
 In particular, $\left (\bigwedge^r \EE\right )\res_E$ is generically free if $r\not\equiv 0$ mod $p$.
\end{corollary}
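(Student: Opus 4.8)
The plan is to feed the monomial-module description of $\bigwedge^r\EE$ recorded in~\eqref{eqn:monomial} into Proposition~\ref{prop:gJt of monomial mod}, and then to translate ``orbit of size $1$ on $\S_n/H$'' into ``$r$-subset fixed by $E$''.

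First, recall from~\eqref{eqn:monomial} that, writing $n=kp$ and $H=\S_{\{1,\ldots,r\}}\times\S_{\{r+1,\ldots,n\}}$, we have $\bigwedge^r\EE\cong\langle v\rangle\Ind_H^{\S_n}$, where $\langle v\rangle$ affords $\sgn\boxtimes F$. Since $p$ is odd, $\sgn\boxtimes F$ is trivial on every $p$-element of $H$, so $\bigwedge^r\EE$ is a monomial module in the sense of Section~\ref{sec:Brauer}, and Proposition~\ref{prop:gJt of monomial mod} applies with $G=\S_n$. It tells us that $\bigl(\bigwedge^r\EE\bigr)\res_E$ has stable generic Jordan type $[1]^s$, where $s$ is the number of orbits of size $1$ of $E$ acting on $\S_n/H$.

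It then remains only to identify $s$, which is essentially bookkeeping. The group $\S_n$ acts transitively on the set of $r$-subsets of $\{1,\ldots,n\}$, and the stabiliser of $\{1,\ldots,r\}$ is exactly $H$; hence $\sigma H\mapsto\sigma\{1,\ldots,r\}$ is an isomorphism of $\S_n$-sets from $\S_n/H$ onto the set of all $r$-subsets of $\{1,\ldots,n\}$. Under this isomorphism an orbit of size $1$ of $E$ on $\S_n/H$ is precisely an $r$-subset of $\{1,\ldots,kp\}$ fixed setwise by $E$, which gives the first assertion. For the last statement, suppose $r\not\equiv 0\pmod p$. An $r$-subset fixed by $E$ is a disjoint union of $E$-orbits on $\{1,\ldots,kp\}$, and since $E$ is a $p$-group with no fixed point on $\{1,\ldots,kp\}$, each such orbit has size a positive power of $p$; thus $p\mid r$, a contradiction. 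Hence $s=0$ and $\bigl(\bigwedge^r\EE\bigr)\res_E$ is generically free.

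I do not expect a serious obstacle: once Proposition~\ref{prop:gJt of monomial mod} is available the whole corollary is a short deduction. The two points that deserve a moment's care are the monomial-module hypothesis, namely that $\sgn\boxtimes F$ kill every $p$-element of $H$ --- this is exactly where oddness of $p$ is used, and it fails when $p=2$ --- and, for the ``in particular'', the use of the fact that $E$ has no fixed points on $\{1,\ldots,kp\}$, which is what forces every $E$-orbit on this set to have size divisible by $p$.
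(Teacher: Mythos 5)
Your argument is correct and takes essentially the same route as the paper's own proof: feed the monomial description~\eqref{eqn:monomial} into Proposition~\ref{prop:gJt of monomial mod}, identify $\S_n/H$ with the set of $r$-subsets of $\{1,\ldots,kp\}$, and observe that a fixed $r$-subset is a union of $E$-orbits whose sizes are positive powers of $p$ (you are in fact slightly more careful than the paper in making the no-fixed-point hypothesis on $E$ explicit for the final assertion). One minor aside: the monomial hypothesis does not actually fail for $p=2$, since $\sgn$ takes values $\pm 1=1$ in a field of characteristic $2$, but this is immaterial to the corollary.
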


\begin{proof} Since $\bigwedge^r\EE$ is a monomial module by \eqref{eqn:monomial}, we may apply
Proposition~\ref{prop:gJt of monomial mod}. Identifying $\S_n / H$ with the set of $r$-subsets
of $\{1,\ldots, kp\}$ we see that orbits of $E$ on $\S_n / H$ of size $1$ correspond
to $r$-subsets of $\{1,\ldots, kp\}$ fixed by $E$. Hence
the generic Jordan type of $(\bigwedge^r\EE)\res_E$ is $[1]^s$ where $s$ is the
number of $r$-subsets of $\{1,\ldots,kp\}$ which are fixed under the action of $E$.
Notice that all the orbits of $E$ on $\{1,\ldots,kp\}$ have sizes $p^j$ for some $j\geq 1$. Thus $s=0$ if $r$ is not divisible by $p$.
\end{proof}

We are now ready to prove Theorem~\ref{thm:elem} under hypothesis (i).
Let $E=E(m_1,m_2,\ldots, m_t)$ where $m_1\geq 2$.  
By Proposition \ref{prop:sgJt}(iv),
it suffices to show that  $S^{(kp-p,1^p)}\res_E\; \cong \bigwedge^p S^{(kp-1,1)} \res_E$ is not
generically free. We truncate the long exact sequence in
Proposition \ref{prop:longExact} to obtain
\[ 0 \longrightarrow \bigwedge^p S^{(kp-1,1)} \longrightarrow \bigwedge^p \EE \ru{\delta_p}
\bigwedge^{p-1} \EE \ru{\delta_{p-1}} \cdots \rightarrow \EE \ru{\delta_1} F \longrightarrow 0.\]
Restrict each term in the long exact sequence to  $E$. For each $1\leq i\leq p$, we have a short exact sequence
\[0\longrightarrow 
\bigwedge^i S^{(kp-1,1)}
\Res_E\longrightarrow 
\bigwedge^i \EE 
\Res_E\ru{\delta_{i}} 
\bigwedge^{i-1} S^{(kp-1,1)} 
\Res_E\longrightarrow 0. \]
By Corollary \ref{Cor:sgJt of wedge E},
$\bigwedge^i \EE \res_E$
is generically free if $1 \le i \le p-1$.
Since $F\res_E$ 
has generic Jordan type $[1]$, it follows from
Proposition \ref{prop:sgJt}(ii)(a)
that
$S^{(kp-1,1)} \res_E$
has stable generic Jordan type $[p-1]$. Repeating this argument, with
$\bigwedge^{2} S^{(kp-1,1)} \res_E$ up to $\bigwedge^{p-1} S^{(kp-1,1)}\res_E$,
we find that $\bigwedge^{p-1} S^{(kp-1,1)}$ has stable generic Jordan type $[1]$.

Suppose for a contradiction that  $\bigwedge^{p} S^{(kp-1,1)}\res_E$
is generically free. Then, by Proposition \ref{prop:sgJt}(ii)(c), 
$\bigwedge^p \EE \res_E$ has stable generic Jordan type $[1]$.
On the other hand, since $E=E(m_1,m_2,\ldots,m_t)$ where $m_1\geq 2$, there are
exactly $m_1$ orbits of $E$ on $\{1,\ldots,kp\}$ of size $p$. Each such 
orbit corresponds to a $p$-subset of $\{1,\ldots,kp\}$ fixed by $E$ so, by
Corollary~\ref{Cor:sgJt of wedge E},
$\bigwedge^p \EE \res_E$ has
stable generic Jordan type $[1]^{m_1}$. The contradiction shows that
$S^{(kp-p,1^p)} \res_E$ is not generically free. This completes the proof.

\section{Proof of Theorem~\ref{thm:elem} under hypothesis (ii)}
\label{sec:proofelemii}

Let $F$ be a field of prime characteristic $p$. Under the hypothesis (ii) of Theorem~\ref{thm:elem}, we have that $k\geq p$. As in Section~\ref{sec:hook}, we let $\EE = \langle v_1, \ldots, v_{kp} \rangle$
be the $kp$-dimensional
natural permutation module for $\S_{kp}$ and define $S^{(kp-1,1)}$
to be the submodule with $F$-basis $\{e_2-e_1,\ldots, e_{kp}-e_1\}$.
Let $W = \bigwedge^p S^{(kp-1,1)}$.  By Proposition~\ref{prop:hookIso}
we have $W \cong S^{(kp-p,1^p)}$.  

Let $T$ be a Sylow $p$-subgroup of the symmetric group on $\{p^2+1,\ldots, kp\}$.
Let $Q$ be the subgroup of $\S_{kp}$ generated by $T$ together with
\begin{align*}
\alpha &= (1,2,\ldots, p) \cdots (p^2-p+1,p^2-p+2,\ldots,p^2), \\
\beta  &= (1,p+1,\ldots, p^2-p+1)\cdots (p,2p,\ldots, p^2),
\end{align*} and let
\begin{equation*}
w = \bigl( e_1 + e_{p+1} + \cdots + e_{(p-1)p+1} \bigr) \wedge \cdots \wedge
\bigl( e_p + e_{2p} + \cdots + e_{p^2} \bigr).
\end{equation*}
Since each factor in the wedge product lies in
$S^{(kp-1,1)}$, and $w$ is fixed by $\alpha$,~$\beta$ and $T$,
we have $w \in W^Q$.  Observe that the coefficient of $e_1 \wedge e_{2} \wedge \cdots
\wedge e_{p}$
in $w$ is $1$.

The main result of this section is Proposition~\ref{prop:small_elem} below. It gives us the statement of Theorem~\ref{thm:elem}(ii) almost immediately.

\begin{proposition}\label{prop:small_elem}
If $R < Q$ then no element of $\Tr_R^Q W^R$
has a non-zero coefficient of
$e_1  \wedge e_{2} \cdots \wedge e_{p}$
 when expressed in the monomial basis
of~$\bigwedge^p \EE$.
\end{proposition}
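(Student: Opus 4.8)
The plan is to apply Proposition~\ref{prop:monomial} to the monomial module $\bigwedge^p\EE\cong\langle v\rangle\Ind^{\S_{kp}}$ from \eqref{eqn:monomial}, where $H=\S_{\{1,\ldots,p\}}\times\S_{\{p+1,\ldots,kp\}}$, and track where the distinguished coset $e_1\wedge\cdots\wedge e_p$ — that is, the coset corresponding to the $p$-subset $\{1,\ldots,p\}$ — sits under the action of $Q$ and of any proper subgroup $R<Q$. Recall that $W=\bigwedge^pS^{(kp-1,1)}\subseteq\bigwedge^p\EE$, and by Proposition~\ref{prop:monomial}(iii),(iv), for any $p$-subgroup $R\le Q$ the space $V^R$ (with $V=\bigwedge^p\EE$) has as a basis the orbit sums $s_\ell$ of $R$ on the $p$-subsets, and $\Tr_R^Q V^R$ is spanned by those $s_\ell$ whose $R$-stabiliser already equals the full $Q$-stabiliser. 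Since the monomial basis of $\bigwedge^p\EE$ restricts on $W^R$ (and on $\Tr_R^Q W^R\subseteq\Tr_R^Q V^R$) to a subset of these orbit sums, an element of $\Tr_R^Q W^R$ has a nonzero coefficient of $e_1\wedge\cdots\wedge e_p$ only if the $p$-subset $\{1,\ldots,p\}$ lies in an $R$-orbit whose $R$-stabiliser coincides with its $Q$-stabiliser.

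So the heart of the argument is the following purely combinatorial claim: for every proper subgroup $R<Q$, the stabiliser in $R$ of the $p$-set $\{1,\ldots,p\}$ is strictly smaller than the stabiliser in $Q$ of $\{1,\ldots,p\}$. First I would compute $\Stab_Q(\{1,\ldots,p\})$: the generator $\alpha$ is a product of $p$-cycles, the first of which is $(1,2,\ldots,p)$, and both $\alpha$ and $\beta$ and $T$ visibly move $\{1,\ldots,p\}$ only via $\alpha$'s action (note $\beta$ sends $1\mapsto p+1$, so $\beta\notin\Stab_Q(\{1,\ldots,p\})$, and $T$ fixes $\{1,\ldots,p\}$ pointwise). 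One identifies $\Stab_Q(\{1,\ldots,p\})$ as $\langle\alpha\rangle\times T$ (or a subgroup thereof of the same order), which has order $p\cdot|T|$. Then, given $R<Q$, I would show $|\Stab_R(\{1,\ldots,p\})|<p\cdot|T|$. The key structural fact is that $Q/\langle\alpha,\beta\rangle\cong T$ acts on the $k-p$ points outside $\{1,\ldots,p^2\}$, while $\langle\alpha,\beta\rangle$ acts on $\{1,\ldots,p^2\}$ as a base group of order $p^2$ together with a "twisting" element $\beta$; the subgroup $\langle\alpha\rangle\times T$ is precisely the stabiliser of the block $\{1,\ldots,p\}$ inside $Q$. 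Any $R<Q$ either misses some element of $T$ — forcing $\Stab_R$ to drop below index giving the full $|T|$ factor — or contains $T$ but then, being proper, misses an element that normalises $\langle\alpha\rangle$ nontrivially; in either case the intersection with $\langle\alpha\rangle\times T$ is proper.

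The step I expect to be the main obstacle is verifying cleanly that $\Stab_R(\{1,\ldots,p\})\subsetneq\Stab_Q(\{1,\ldots,p\})$ for \emph{every} proper $R$, rather than just for maximal ones — a priori a proper subgroup could still contain all of $\langle\alpha\rangle\times T$ while omitting only elements that move $\{1,\ldots,p\}$. To handle this I would argue that $Q=\langle\alpha,\beta,T\rangle$ and $\langle\alpha\rangle\times T=\Stab_Q(\{1,\ldots,p\})$ together generate $Q$ (indeed $\beta$ together with $\langle\alpha\rangle\times T$ generates $Q$ since $\beta$ is the only generator not in the stabiliser), so $\Stab_Q(\{1,\ldots,p\})$ is a \emph{maximal} subgroup of $Q$; hence any subgroup $R$ containing $\Stab_Q(\{1,\ldots,p\})$ is either $\Stab_Q(\{1,\ldots,p\})$ itself or all of $Q$, and in the first case $R=\Stab_Q(\{1,\ldots,p\})$ is not proper in the relevant sense... more carefully, if $R<Q$ and $\Stab_R(\{1,\ldots,p\})=\Stab_Q(\{1,\ldots,p\})$ then $R\supseteq\Stab_Q(\{1,\ldots,p\})$, which by maximality gives $R=\Stab_Q(\{1,\ldots,p\})$, and one checks this last possibility directly: $\Stab_Q(\{1,\ldots,p\})$ fixes $e_1\wedge\cdots\wedge e_p$ up to the sign character, so $\Tr_{\Stab_Q(\{1,\ldots,p\})}^Q$ applied to any element of $W^{\Stab_Q(\{1,\ldots,p\})}$ produces a sum over the $p$ cosets $\beta^i\langle\alpha\rangle T$ of distinct $p$-sets $\{1,\ldots,p\},\{p+1,\ldots,2p\},\ldots$, so the coefficient of $e_1\wedge\cdots\wedge e_p$ is the coefficient coming from a single term, and one shows this cancels or, more simply, that no such $R$ is permitted because $\Stab_Q(\{1,\ldots,p\})\ne\Stab_R(\{1,\ldots,p\})$ is forced by properness via the maximality just established. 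Finishing this case analysis carefully — and in particular pinning down $\Stab_Q(\{1,\ldots,p\})$ exactly and its maximality in $Q$ — is where the real work lies; the rest is a direct application of Proposition~\ref{prop:monomial}.
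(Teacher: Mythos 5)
Your first step is sound and matches the paper's own reduction: using Proposition~\ref{prop:monomial}(iii),(iv) applied to the monomial module $\bigwedge^p\EE$, together with the observation that $\Stab_Q(\{1,\ldots,p\})=\langle\alpha,T\rangle$ is a \emph{maximal} subgroup of $Q$, one concludes that the only proper subgroup $R<Q$ for which the monomial $e_1\wedge\cdots\wedge e_p$ could appear in $\Tr_R^Q W^R$ is $R=\langle\alpha,T\rangle$. But your governing ``purely combinatorial claim'' --- that $\Stab_R(\{1,\ldots,p\})\subsetneq\Stab_Q(\{1,\ldots,p\})$ for \emph{every} proper $R<Q$ --- is false: $R=\langle\alpha,T\rangle$ is itself a proper subgroup of $Q$ (it omits $\beta$) and satisfies $\Stab_R(\{1,\ldots,p\})=R=\Stab_Q(\{1,\ldots,p\})$. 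Your maximality argument isolates this case; it does not eliminate it, and your closing suggestion that ``no such $R$ is permitted \ldots{} by properness via the maximality just established'' is circular. Indeed no stabiliser argument can work here: for the full module $\bigwedge^p\EE$ the element $\Tr_{\langle\alpha,T\rangle}^Q(e_1\wedge\cdots\wedge e_p)$ genuinely lies in $\Tr_{\langle\alpha,T\rangle}^Q(\bigwedge^p\EE)^{\langle\alpha,T\rangle}$ and has coefficient $1$ on $e_1\wedge\cdots\wedge e_p$. The proposition is therefore a statement about the Specht submodule $W=\ker\delta_p$, and the fact that $W$ is a proper submodule must enter the argument in an essential way.

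Concretely, what remains to be shown is that for every $v\in W^{\langle\alpha,T\rangle}$ the coefficient of $e_1\wedge\cdots\wedge e_p$ in $\sum_{i=0}^{p-1}\beta^i v$ vanishes --- equivalently, that the coefficients of $v$ on the $p$ monomials $e_{ip+1}\wedge\cdots\wedge e_{ip+p}$, $0\le i\le p-1$, sum to zero. Your proposal covers this with ``one shows this cancels,'' but this is precisely the hard part of the paper's proof: it first reduces (Proposition~\ref{prop:subspace_reduction}, by splitting off the monomials meeting $\{p^2+1,\ldots,kp\}$ and using~\eqref{eq:deltaProd}) to an element of $(\bigwedge^pS^{(p^2-1,1)})^{\langle\alpha\rangle}$, and then runs an induction along the filtration $V_0\supseteq V_1\supseteq\cdots\supseteq V_p=0$ by the number of indices lying in $\Delta=\{1,\ldots,p\}$, modifying $u$ at each stage by the explicit invariant elements $z_m$ and $w(\mathbf{j})$ of Lemmas~\ref{lemma:zm} and~\ref{lemma:wj}, each of which is checked to contribute zero to the relevant coefficient. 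None of this is present in, or substituted for by, your outline, so the proposal has a genuine gap at the central step.
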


\begin{proof}[Proof of Theorem~\ref{thm:elem}(ii) assuming Proposition~\ref{prop:small_elem}]
Let $E$ be an elementary abelian $p$-subgroup of $\S_{kp}$ having at least one
orbit of size $p^2$. There exists $\sigma \in \S_{kp}$ such that
\[ \sigma E \sigma^{-1} \le \langle \alpha, \beta \rangle \times T = Q. \]
By Proposition~\ref{prop:small_elem}, we have $W(Q) \not= 0$.
Theorem~\ref{thm:elem}(ii) now follows from
Proposition~\ref{prop:Brauer}.
\end{proof}

The proof of Proposition \ref{prop:small_elem} occupies the remainder of this section.
It is divided into two steps: in the first
we reduce the proposition
to a question about a specific
relative trace map on the smaller module $\bigwedge^p S^{(p^2-1,1)}$.
In the second step we answer this question (see Proposition~\ref{prop:subspace_induction})
using a carefully chosen filtration of $\bigwedge^p S^{(p^2-1,1)}$.

\subsection*{Step 1: Reduction} We need the following lemma.

\begin{lemma}\label{lemma:E_basis}
Let $R \le Q$, let $B^{(1)}, \ldots, B^{(m)}$
be representatives for the orbits of $R$ on
the set of $p$-subsets of $\{1,\ldots,kp\}$
and let $\mathbf{i}^{(1)}, \ldots, \mathbf{i}^{(m)}$
be the corresponding multi-indices. Then a basis for
$(\bigwedge^p \EE)^R$ is
\[ \mathcal{B} = \{ \Tr_{\Stab_R (B^{(k)})}^R e_{\mathbf{i}^{(k)}} : 1 \le k \le m \}. \]
\end{lemma}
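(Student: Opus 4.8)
This is really a special case of the general principle in Proposition~\ref{prop:monomial}(iii), applied to the monomial module $\bigwedge^p \EE$. Recall from~\eqref{eqn:monomial} that $\bigwedge^p \EE \cong \langle v \rangle \ind^{\S_{kp}}_H$ where $H = \S_{\{1,\ldots,p\}} \times \S_{\{p+1,\ldots,kp\}}$ and $\langle v \rangle$ affords $\sgn \boxtimes F$, which is trivial on $p$-elements of $H$ since $\sgn$ is trivial on odd-order elements. The plan is to transport the abstract statement of Proposition~\ref{prop:monomial}(iii) through this isomorphism. Under~\eqref{eqn:monomial} the cosets $\S_{kp}/H$ are identified with the $p$-subsets of $\{1,\ldots,kp\}$: the coset $\gamma H$ corresponds to the set $\{\gamma(1),\ldots,\gamma(p)\}$, and the generator $\gamma \otimes v$ maps (up to sign) to the monomial $e_{\gamma(1)} \wedge \cdots \wedge e_{\gamma(p)} = \pm e_{\mathbf{i}}$ where $\mathbf{i}$ is the multi-index of that $p$-subset.

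First I would fix, for each orbit representative $B^{(k)}$, an element $\gamma_k \in \S_{kp}$ with $\{\gamma_k(1),\ldots,\gamma_k(p)\} = B^{(k)}$, so that $\gamma_k H$ are orbit representatives for $R$ on $\S_{kp}/H$ and $\Stab_R(\gamma_k H) = \Stab_R(B^{(k)})$. Applying Proposition~\ref{prop:monomial}(iii) with $G = \S_{kp}$ and $Q$ replaced by $R$ then gives that $\{ s_k : 1 \le k \le m\}$ is a basis of $(\bigwedge^p \EE)^R$, where $s_k = \Tr^R_{\Stab_R(B^{(k)})}(\gamma_k \otimes v)$. It remains only to observe that $\gamma_k \otimes v$ corresponds to $\pm e_{\mathbf{i}^{(k)}}$, so that $s_k = \pm \Tr^R_{\Stab_R(B^{(k)})} e_{\mathbf{i}^{(k)}}$; the overall sign does not affect whether a set is a basis, so $\mathcal{B}$ as stated is a basis of $(\bigwedge^p \EE)^R$.

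The only genuine point requiring care — and the one I would spell out — is checking that the action of $R$ on the monomial basis $\{e_{\mathbf{i}}\}$ of $\bigwedge^p \EE$ really is, up to signs, the permutation action of $R$ on $p$-subsets, so that the orbit sums $\Tr^R_{\Stab_R(B^{(k)})} e_{\mathbf{i}^{(k)}}$ are exactly the images under the isomorphism $V\res_R \cong W\res_R$ of Proposition~\ref{prop:monomial}(i) of the honest permutation-orbit sums, which form a basis of $W^R$. This is immediate: $\sigma \cdot e_{\mathbf{i}} = e_{\sigma(i_1)} \wedge \cdots \wedge e_{\sigma(i_p)} = \sgn(\pi)\, e_{\mathbf{i}'}$ where $\mathbf{i}'$ is the sorted multi-index of $\{\sigma(i_1),\ldots,\sigma(i_p)\}$ and $\pi$ is the permutation sorting it; so each basis vector is sent to $\pm$ a basis vector, and the induced $\pm$-monomial action on the set of $p$-subsets is the claimed permutation action. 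I do not expect any real obstacle here; the lemma is essentially a restatement of Proposition~\ref{prop:monomial} tailored to the concrete situation, and its role is simply to make the orbit-sum basis explicit for use in the next step.
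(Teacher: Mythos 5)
Your proposal is correct and follows essentially the same route as the paper: both identify $\bigwedge^p \EE$ with the monomial module $\langle v\rangle\ind_H^{\S_{kp}}$ via \eqref{eqn:monomial}, note that $\sgn$ is trivial on $p$-elements so Proposition~\ref{prop:monomial} applies, identify $\S_{kp}/H$ with the $p$-subsets of $\{1,\ldots,kp\}$, and read off the orbit-sum basis from parts (ii) and (iii) of that proposition. Your extra care with the signs (each $\gamma_k\otimes v$ corresponds to $\pm e_{\mathbf{i}^{(k)}}$, which does not affect being a basis) is a detail the paper leaves implicit, and is handled correctly.
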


\begin{proof}
By (6) at the beginning of Section \ref{sec:proofelemi}, the module $\bigwedge^p \EE$ is induced from the representation $\sgn \boxtimes
F$
of $H = \S_{\{1,\ldots, p\}} \times \S_{\{p+1,\ldots, kp\}}$.
Since $\sgn(g) = 1$ for all $p$-elements $g \in \S_p$,
the hypotheses of Proposition~\ref{prop:monomial} are satisfied.
The lemma now follows from parts (ii) and~(iii) of this proposition,
noting as in the proof of Corollary~\ref{Cor:sgJt of wedge E}
that the coset space $\S_{kp}/H$ is
isomorphic as an $\S_{kp}$-set
to the set of all $p$-subsets of $\{1,\ldots, kp\}$. 
\end{proof}

We now show that only one subgroup $R$ needs to be considered in Proposition~\ref{prop:small_elem}.

\begin{proposition}\label{prop:orbit_reduction}
If $R$ is a proper subgroup of $Q$ such that
$e_1 \wedge e_{2} \wedge \cdots \wedge e_{p}$
appears with a non-zero coefficient in an element of $\Tr_R^Q W^R$, then
$R = \langle \alpha, T \rangle$.
\end{proposition}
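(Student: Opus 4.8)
The plan is to reduce the whole question to a computation of the $Q$-orbit and the $Q$-stabiliser of the $p$-subset $\{1,\dots,p\}$, together with the standard fact that the relative trace of an $R$-orbit sum of $p$-subsets vanishes unless the relevant $R$- and $Q$-stabilisers coincide.

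First I would observe that $W=\bigwedge^p S^{(kp-1,1)}$ is a submodule of $\bigwedge^p\EE$, so $W^R\subseteq(\bigwedge^p\EE)^R$, and by Lemma~\ref{lemma:E_basis} every $x\in W^R$ is an $F$-linear combination of the orbit sums $\Tr_{\Stab_R(C)}^R e_C$, with $C$ ranging over representatives for the $R$-orbits on the $p$-subsets of $\{1,\dots,kp\}$. Applying $\Tr_R^Q$ and using transitivity of the relative trace gives
\[ \Tr_R^Q\,\Tr_{\Stab_R(C)}^R e_C=\Tr_{\Stab_Q(C)}^Q\bigl(\Tr_{\Stab_R(C)}^{\Stab_Q(C)} e_C\bigr). \]
Since $Q$ is a $p$-group and $p$ is odd, every element of $\Stab_Q(C)$ restricts to an even permutation of $C$ and hence fixes $e_C$, so $\Tr_{\Stab_R(C)}^{\Stab_Q(C)} e_C=[\Stab_Q(C):\Stab_R(C)]\,e_C$, which is $0$ in $F$ unless $\Stab_R(C)=\Stab_Q(C)$. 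When the stabilisers agree the right-hand side is the $Q$-orbit sum $\Tr_{\Stab_Q(C)}^Q e_C$, whose expansion in the monomial basis is a $\pm1$-combination of the $e_D$ with $D$ in the $Q$-orbit of $C$. Consequently $e_1\wedge e_2\wedge\cdots\wedge e_p$ can appear with non-zero coefficient in $\Tr_R^Q x$ only through representatives $C$ with $\Stab_R(C)=\Stab_Q(C)$ and $C$ in the $Q$-orbit of $\{1,\dots,p\}$.

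Next I would identify this orbit and stabiliser. Writing $B_j=\{jp+1,\dots,jp+p\}$ for $0\le j\le p-1$, the generator $\alpha$ fixes each $B_j$ setwise, $T$ fixes each $B_j$ pointwise, and $\beta$ cyclically permutes $B_0,\dots,B_{p-1}$; hence the $Q$-orbit of $B_0=\{1,\dots,p\}$ is precisely $\{B_0,\dots,B_{p-1}\}$. Moreover $Q=\langle\alpha,\beta\rangle\times T$ with $\langle\alpha,\beta\rangle$ elementary abelian of order $p^2$, and an element $\alpha^a\beta^b t$ of $Q$ sends $B_0$ to $B_b$; since $\beta$ centralises both $\alpha$ and $T$, this gives $\Stab_Q(B_j)=\langle\alpha\rangle\times T=\langle\alpha,T\rangle$ for every $j$. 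Because $\langle\alpha,T\rangle$ then has index $p$ in $Q$, it is a maximal subgroup of $Q$.

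Combining the two steps: if $e_1\wedge e_2\wedge\cdots\wedge e_p$ appears with non-zero coefficient in some element of $\Tr_R^Q W^R$, then by Step~1 there is a $j$ with $\Stab_R(B_j)=\Stab_Q(B_j)=\langle\alpha,T\rangle$, so $\langle\alpha,T\rangle\le R$; as $R<Q$ and $\langle\alpha,T\rangle$ is maximal in $Q$, this forces $R=\langle\alpha,T\rangle$. I do not expect a genuine obstacle here: the only points needing care are the direct verification that $\Stab_Q(B_0)$ is exactly $\langle\alpha,T\rangle$ (immediate once one notes $\alpha^a\beta^b t$ sends $B_0$ to $B_b$) and the remark that all wedge-product signs contributed by elements of the $p$-group $\Stab_Q(C)$ are trivial because $p$ is odd; the substance of the argument is the identification of the common stabiliser $\langle\alpha,T\rangle$ and its maximality in $Q$.
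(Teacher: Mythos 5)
Your proposal is correct and follows essentially the same route as the paper: reduce from $W$ to $\bigwedge^p \EE$ via Lemma~\ref{lemma:E_basis}, observe that the only orbit sums that can contribute the monomial $e_1\wedge\cdots\wedge e_p$ after applying $\Tr_R^Q$ are those with $C$ in the $Q$-orbit of $\{1,\ldots,p\}$ and $\Stab_R(C)=\Stab_Q(C)$, and then use that $\Stab_Q(\{1,\ldots,p\})=\langle\alpha,T\rangle$ is maximal in $Q$. The only cosmetic difference is that you re-derive the vanishing criterion $\Stab_R(C)=\Stab_Q(C)$ by hand (via transitivity of the relative trace and the evenness of $p$-elements acting on $C$), where the paper simply cites Proposition~\ref{prop:monomial}(iv).
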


\begin{proof}
Since $W^R$ is a submodule of $(\bigwedge^p \EE)^R$,
it is sufficient to prove the proposition with $W$ replaced by $\bigwedge^p \EE$.
Let $\mathcal{B}$ be the basis of $(\bigwedge^p \EE)^Q$ given by Lemma~\ref{lemma:E_basis}.
The unique element of $\mathcal{B}$
containing $e_1 \wedge e_{2} \wedge \cdots \wedge e_{p}$
with a non-zero coefficient is
\[ \Tr_{\langle \alpha, T \rangle}^{Q} (e_1 \wedge e_{2} \wedge \cdots \wedge e_{p})
= e_1 \wedge e_{2} \wedge \cdots \wedge e_{p} + \cdots +
       e_{(p-1)p+1} \wedge e_{(p-1)p+2} \wedge \cdots \wedge e_{p^2}. \]
By Proposition~\ref{prop:monomial}(iv),
$\Tr_R^Q (\bigwedge^p \EE)^R$ has, as a basis, a subset $\mathcal{B}'$ of $\mathcal{B}$
such that $\mathcal{B}'$ contains
$\Tr_{\langle \alpha, T \rangle}^{Q} (e_1 \wedge e_{2} \wedge \cdots \wedge e_{p})$
if and only if
\[ \Stab_Q (\{1, 2, \ldots, p\}) = \Stab_R (\{1,2,\ldots,p\}). \]
Since $\Stab_Q (\{1, 2, \ldots, p\}) = \langle \alpha , T \rangle$ is a maximal
subgroup of $Q$, this condition holds if and only if
$R = \langle \alpha, T \rangle$. 
\end{proof}

The next proposition completes the reduction step.

\begin{proposition}\label{prop:subspace_reduction}
If $e_1 \wedge e_{2} \wedge \cdots \wedge e_{p}$
appears with a non-zero coefficient in
$\Tr_{\langle \alpha, T \rangle}^Q v$ for some $v \in (\bigwedge^p S^{(kp-1,1)})^{\langle\alpha,T\rangle}$
then $e_1 \wedge e_{2} \wedge \cdots \wedge e_{p}$
appears with a non-zero coefficient in
$\Tr_{\langle \alpha \rangle}^{\langle \alpha, \beta \rangle}
v'$ for some $v' \in (\bigwedge^p S^{(p^2-1,1)})^{\langle \alpha\rangle}$.
\end{proposition}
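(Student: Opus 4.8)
The plan is to construct $v'$ from $v$ by ``forgetting'' the coordinates $p^2+1,\ldots,kp$. Precisely, let $\pi \colon \bigwedge^p \EE \to \bigwedge^p \langle e_1,\ldots,e_{p^2}\rangle$ be the $F$-linear projection that fixes a monomial $e_{\mathbf i}$ when every entry of $\mathbf i$ lies in $\{1,\ldots,p^2\}$ and sends it to $0$ otherwise, and set $v' = \pi(v)$. Three facts about $\pi$ make this work. (a) Since $\langle\alpha,\beta\rangle$ permutes $\{1,\ldots,p^2\}$ while $T$ fixes it pointwise, $\pi$ commutes with the action of $Q$; in particular $\pi$ is $\langle\alpha\rangle$-equivariant, so $\pi(v)$ is $\langle\alpha\rangle$-fixed. (b) The monomial $e_1 \wedge e_2 \wedge \cdots \wedge e_p$, and more generally every monomial supported on $\{1,\ldots,p^2\}$, is fixed by $\pi$, so $v$ and $\pi(v)$ have the same coefficient on every such monomial. (c) The real point: whenever $v$ is fixed by $T$, the element $\pi(v)$ lies not merely in $\bigwedge^p \langle e_1,\ldots,e_{p^2}\rangle$ but in $\bigwedge^p S^{(p^2-1,1)}$. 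Since the given $v$ is $\langle\alpha,T\rangle$-fixed, (a) and (c) together give $v' = \pi(v) \in \bigl(\bigwedge^p S^{(p^2-1,1)}\bigr)^{\langle\alpha\rangle}$, as required.

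Granting (c), the proposition is quick. As $\langle\alpha,T\rangle = \langle\alpha\rangle \times T$ has index $p$ in $Q = \langle\alpha\rangle \times \langle\beta\rangle \times T$, the elements $1,\beta,\ldots,\beta^{p-1}$ are coset representatives, so $\Tr_{\langle\alpha,T\rangle}^Q v = \sum_{i=0}^{p-1} \beta^i v$ and similarly $\Tr_{\langle\alpha\rangle}^{\langle\alpha,\beta\rangle} v' = \sum_{i=0}^{p-1} \beta^i v'$. The orbits of $\alpha$ of size $p$ are the blocks $R_a = \{(a-1)p+1,\ldots,ap\}$ for $1 \le a \le p$, and $\beta$ permutes these blocks cyclically and order-preservingly; hence, without sign, $\beta^{-i}(e_1 \wedge \cdots \wedge e_p)$ runs over the monomials $e_{R_1},\ldots,e_{R_p}$ as $i$ runs over $\{0,\ldots,p-1\}$. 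So the coefficient of $e_1 \wedge \cdots \wedge e_p$ in $\Tr_{\langle\alpha,T\rangle}^Q v$ is $\sum_{a=1}^p \mu_a$, where $\mu_a$ is the coefficient of $e_{R_a}$ in $v$, and the same identity holds with $v'$ in place of $v$. By (b), each $e_{R_a}$ has the same coefficient in $v$ and in $\pi(v) = v'$, so the coefficient of $e_1 \wedge \cdots \wedge e_p$ in $\Tr_{\langle\alpha\rangle}^{\langle\alpha,\beta\rangle} v'$ equals its coefficient in $\Tr_{\langle\alpha,T\rangle}^Q v$, which is non-zero by hypothesis.

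To prove (c), I would use the Rewriting Lemma (Lemma~\ref{lemma:rewrite}) to write $v = \sum_{\mathbf j \in J^{(p)}} \mu_{\mathbf j}\, \delta(e_1 \wedge e_{\mathbf j})$, with $\mu_{\mathbf j}$ the coefficient of $e_{\mathbf j}$ in $v$, and then apply $\pi$ termwise. When $\mathbf j \subseteq \{2,\ldots,p^2\}$, the element $\delta(e_1 \wedge e_{\mathbf j})$ is supported on $\{1,\ldots,p^2\}$ and is unchanged by $\pi$. When $\mathbf j$ has two or more entries exceeding $p^2$, every monomial of $\delta(e_1 \wedge e_{\mathbf j})$ still has an entry exceeding $p^2$, so $\pi$ annihilates it. The only remaining case is $\mathbf j = (j_1,\ldots,j_{p-1},x)$ with $j_{p-1} \le p^2 < x$, where a short computation gives $\pi(\delta(e_1 \wedge e_{\mathbf j})) = (-1)^p\, e_1 \wedge e_{j_1} \wedge \cdots \wedge e_{j_{p-1}}$. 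Grouping these last terms by their ``small part'' $(j_1,\ldots,j_{p-1})$, the contribution of this case to $\pi(v)$ is
\[ (-1)^p \sum_{(j_1,\ldots,j_{p-1})} \Bigl( \sum_{x > p^2} \mu_{(j_1,\ldots,j_{p-1},x)} \Bigr)\, e_1 \wedge e_{j_1} \wedge \cdots \wedge e_{j_{p-1}}. \]
Here $T$-invariance of $v$ enters: since $T$ fixes $e_{j_1},\ldots,e_{j_{p-1}}$ and permutes $\{e_{p^2+1},\ldots,e_{kp}\}$, the coefficient $\mu_{(j_1,\ldots,j_{p-1},x)}$ depends only on the $T$-orbit of $x$. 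Since $kp - p^2 = p(k-p)$ is a multiple of $p$, a Sylow $p$-subgroup of the symmetric group on $\{p^2+1,\ldots,kp\}$ has no fixed point, so each of its orbits has size a positive power of $p$; hence every inner sum $\sum_{x > p^2} \mu_{(j_1,\ldots,j_{p-1},x)}$ is an integer combination of the common orbit values in which every coefficient (an orbit size) is divisible by $p$, and so is zero in $F$. Therefore $\pi(v) = \sum_{\mathbf j \in J^{(p)},\ \mathbf j \subseteq \{2,\ldots,p^2\}} \mu_{\mathbf j}\, \delta(e_1 \wedge e_{\mathbf j})$, which lies in $\bigwedge^p S^{(p^2-1,1)}$ by~\eqref{eq:std}, the displayed set being precisely the standard basis of $\bigwedge^p S^{(p^2-1,1)}$.

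I expect the cancellation in the previous paragraph to be the only genuine difficulty: one must see that the terms of $\pi(v)$ that mix the ``small'' indices $\{1,\ldots,p^2\}$ with the ``large'' indices $\{p^2+1,\ldots,kp\}$ all disappear, which is exactly where the two special features of the situation — that $v$ is invariant under the whole Sylow subgroup $T$, and that the number $kp - p^2$ of large indices is divisible by $p$ — are essential. Everything else, namely the coset computation, the identification of the coefficient of $e_1 \wedge \cdots \wedge e_p$ with a sum of block coefficients, and the equivariance of $\pi$, is routine once the projection $\pi$ is in hand.
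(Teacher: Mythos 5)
Your proposal is correct and is essentially the paper's argument: the element $v'=\pi(v)$ you construct is exactly the component $\sum_\ell \lambda_\ell s_\ell$ of $v$ supported on monomials in $\{1,\ldots,p^2\}$ that the paper isolates via the orbit-sum basis of Lemma~\ref{lemma:E_basis}, and your key cancellation (coefficients constant on $T$-orbits, each orbit on $\{p^2+1,\ldots,kp\}$ of size divisible by $p$) is the same divisibility fact the paper exploits through~\eqref{eq:deltaProd} when showing $\delta(t_m)\in U'$. The only difference is bookkeeping: you verify $\pi(v)\in\bigwedge^p S^{(p^2-1,1)}$ by expanding $v$ with the Rewriting Lemma and applying $\pi$ termwise, whereas the paper splits $\delta(v)=0$ across the complementary subspaces $U\oplus U'$ of $\bigwedge^{p-1}\EE$.
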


\begin{proof}
Let $B^{(1)}, \ldots, B^{(L)}$ be representatives for the orbits of $\langle \alpha \rangle$
on the set of $p$-subsets of
$\{1,\ldots, p^2\}$ and let $D^{(1)}, \ldots, D^{(M)}$ be representatives
for the orbits of $\langle \alpha, T \rangle$ on the set of $p$-subsets of
$\{1,\ldots,kp\}$
that have non-empty intersection with $\{p^2+1,\ldots, kp\}$.
Let $\mathbf{i}^{(1)}, \ldots, \mathbf{i}^{(L)}$ and $\mathbf{k}^{(1)}, \ldots, \mathbf{k}^{(M)}$
be the corresponding multi-indices, and let
\begin{align*}
s_\ell &= \Tr_{\Stab_{\langle \alpha, T \rangle} (B^{(\ell)})}^{\langle \alpha, T \rangle}
e_{\mathbf{i}^{(\ell)}}
\quad \text{for $1 \le \ell \le L$,}\\
t_m &= \Tr_{\Stab_{\langle \alpha, T \rangle} (D^{(m)})}^{\langle \alpha, T \rangle}
e_{\mathbf{k}^{(m)}}
\quad \text{for $1 \le m \le M$.}
\end{align*}
By Lemma~\ref{lemma:E_basis}, 
$\{ s_\ell : 1 \le \ell \le L \}$ is a
basis for $(\bigwedge^p \langle e_1, \ldots e_{p^2} \rangle)^{\langle \alpha \rangle}$ and
$\{ s_\ell : 1 \le \ell \le L\} \cup \{t_m : 1 \le m \le M\}$
is a basis for $(\bigwedge^p \EE)^{\langle \alpha, T \rangle}$.

Let $v = \sum_{\ell=1}^L \lambda_\ell s_\ell +
\sum_{m=1}^M \mu_m t_m$ where the coefficients $\lambda_\ell$ and $\mu_m$ are in $F$.
By Proposition~\ref{prop:longExact}, $\bigwedge^p S^{(kp-1,1)}$ is the kernel
of the map $\delta : \bigwedge^p \EE \rightarrow \bigwedge^{p-1} \EE$.
There is a vector space decomposition $\bigwedge^{p-1} \EE = U \oplus U'$ where
\begin{align*}
U  &= \langle e_{\mathbf{i}} : \mbf{i} \in I^{(p-1)}, 1 \le i_1 < \ldots < i_{p-1} \le p^2 \rangle, \\
U' &= \langle e_{\mathbf{i}} : \mbf{i} \in I^{(p-1)}, i_{p-1} > p^2 \rangle.
\end{align*}
For each $s_\ell$ we have $\delta(s_\ell) \in U$. Let $1 \le m \le M$.
If $|D^{(m)} \cap \{p^2+1,\ldots, kp\}| \ge 2$ then every monomial summand
of $t_m$ involves two or more of $e_{p^2+1}, \ldots, e_{kp}$, and so
$\delta(t_m)$ lies in $U'$. Suppose that $D^{(m)} = \{i_1, \ldots, i_{p-1}, i_p\}$
where $1 \le i_1 \le \ldots \le i_{p-1} \le p^2$ and $i_p > p^2$.
Since $\Stab_{\langle \alpha, T\rangle} (\{ i_1,\ldots, i_{p-1}\}) = T$,
we have
\begin{align*}
t_m &= \Tr^{\langle \alpha, T \rangle}_{\Stab_T (\{i_p\})}
\bigl( (e_{i_1} \wedge \cdots \wedge e_{i_{p-1}}) \wedge e_{i_p} \bigr) \\
    &= \Bigl( \Tr_{1}^{\langle \alpha \rangle} (e_{i_1} \wedge \cdots \wedge e_{i_{p-1}}) \Bigr)
    \wedge \Tr_{\Stab_T (\{i_p\})}^T e_{i_p}.
\end{align*}
Since the orbit of $T$ containing $i_p$ has size a multiple of $p$,
it now follows from \eqref{eq:deltaProd} at the end of Section~\ref{sec:hook} that
\[ \delta(t_m) = \delta \Bigl( \Tr_1^{\langle \alpha \rangle}
(e_{i_1} \wedge \cdots \wedge e_{i_{p-1}}) \Bigr) \wedge  \Tr_{\Stab_T (\{i_p\})}^T e_{i_p}.\]
Hence $\delta(t_m) \in U'$. Thus
$\delta(\sum_{\ell=1}^L \lambda_\ell s_\ell) \in U$,
whereas $\delta(\sum_{m=1}^M \mu_m t_m) \in U'$.
Since $\delta(v) = 0$ and $U \cap U' = \{ 0 \}$, it follows that
$\delta(\sum_{\ell=1}^L \lambda_\ell s_\ell)  = 0$. By Proposition~\ref{prop:longExact},
$\sum_{\ell=1}^L \lambda_\ell s_\ell \in \bigwedge^p (S^{(p^2-1,1)})$.
It is clear that no element of $\Tr_{\langle \alpha, T \rangle}^Q \langle t_1,\ldots,t_M\rangle$ contains the
monomial $e_{1} \wedge e_2 \wedge \ldots \wedge e_{p}$ with a non-zero coefficient.
Therefore $v' = \sum_{\ell=1}^L \lambda_\ell s_\ell$  has the properties
required in the proposition.
\end{proof}

\renewcommand{\alph}{\alpha} 

\subsection*{Step 2: Proof of the reduced version of Proposition~\ref{prop:small_elem}}
Fix $u \in (\bigwedge^p S^{(p^2-1,1)})^{\langle \alph \rangle}$.
By Propositions~\ref{prop:orbit_reduction} and~\ref{prop:subspace_reduction}, to
prove Proposition~\ref{prop:small_elem},
it suffices to show that the coefficient of $e_{1} \wedge e_2 \wedge \cdots \wedge e_{p}$ in
$\Tr_{\langle \alph \rangle}^{\langle \alph, \beta \rangle} u$ is zero.
We do this by analysing the behaviour of this trace map
on a filtration of $(\bigwedge^p S^{(p^2-1,1)})^{\langle \alph \rangle}$. 
From now on we work inside  $\bigwedge^p \langle e_1, \ldots, e_{p^2}\rangle$.
To simplify the notation, let $V = \bigwedge^p S^{(p^2-1,1)}$, and let
$M = \bigwedge^p \langle e_1, \ldots, e_{p^2}\rangle$.
Let $I = I^{(p)}$ and let $J = J^{(p)}$, where these
sets of multi-indices are as defined in Section~\ref{sec:hook} when $n=p^2$. (Thus
$I = \{ (i_1,\ldots, i_p) : 1 \le i_1 < \cdots <i_p \le p^2 \}$ and $J = \{ \mathbf{i} \in I:
i_1 > 1\}$.)

Let $\Delta = \{1,2,\ldots,p\}$.
For $0 \le c \le p$ let
\[
I_c = \{ \mathbf{i} \in I : i_1, \ldots, i_c \in \Delta, i_{c+1}, \ldots, i_p \not\in \Delta \}
\]
and let $M_c = \langle e_\mathbf{i} : \mathbf{i} \in I_c \rangle$. Since $\Delta$
is an orbit of $\alpha$, the subspaces
$M_c$ are invariant under $\alpha$, so as an $F \langle \alpha \rangle$-module we have
$M = M_0 \oplus M_1 \oplus \cdots \oplus M_p$.
For $0 \le c \le p$ let
\[ V_c = V \cap (M_c \oplus \cdots \oplus M_p). \]
Since $M_p$ is spanned by 
$e_1 \wedge e_2 \wedge \cdots \wedge e_p$
and $\delta(e_1\wedge e_2 \wedge \cdots \wedge e_p) \not= 0$,
we have $V_p = \{0\}$.
The required result that the coefficient of $e_1 \wedge e_{2} \wedge \cdots \wedge e_{p}$ in $\Tr_{\langle \alpha\rangle}^{\langle\alpha,\beta\rangle}u$
is zero therefore follows immediately from
the case $c = p-1$ of
the next proposition.

\begin{proposition}\label{prop:subspace_induction}
Let  $0 \le c \le p-1$. There exists $v_0 \in V_0^{\langle \alph \rangle}$,
\ldots $v_c \in V_c^{\langle \alph \rangle}$
such that
\begin{enumerate}
\item [(i)] the coefficient of $e_{1} \wedge \cdots \wedge e_{p}$ in
$\Tr_{\langle \alph\rangle}^{\langle \alph, \beta \rangle} (v_0 + \cdots + v_c) $ is zero,
\item [(ii)] $u \in v_0 + \cdots + v_c + V_{c+1}$.
\end{enumerate}
\end{proposition}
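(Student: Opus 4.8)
The plan is to prove the proposition by induction on $c$, constructing the summands $v_0, v_1, \ldots$ one at a time so as to peel off the successive layers of the filtration $V_0 \supseteq V_1 \supseteq \cdots \supseteq V_p = \{0\}$. Throughout I use the decomposition $M = M_0 \oplus \cdots \oplus M_p$ into $F\langle\alph\rangle$-submodules, together with the observation that $\delta$ maps $M_d$ into $M_{d-1} \oplus M_d$: deleting an index lying in $\Delta$ lowers the $M$-degree by one, while deleting one outside $\Delta$ preserves it. In the inductive step, with $v_0, \ldots, v_{c-1}$ already found ($1 \le c \le p-1$), one puts $w = u - (v_0 + \cdots + v_{c-1})$, which by the inductive hypothesis lies in $V_c^{\langle\alph\rangle}$ and for which $\Tr_{\langle\alph\rangle}^{\langle\alph,\beta\rangle}(v_0 + \cdots + v_{c-1})$ has zero coefficient of $e_1 \wedge \cdots \wedge e_p$. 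One then needs $v_c \in V_c^{\langle\alph\rangle}$ with $w - v_c \in V_{c+1}$ (this gives (ii)) and with $\Tr_{\langle\alph\rangle}^{\langle\alph,\beta\rangle} v_c$ having zero coefficient of $e_1 \wedge \cdots \wedge e_p$ (this, with (i) for $c-1$, gives (i)). A useful preliminary remark is that for $v \in V_c^{\langle\alph\rangle}$ with $c \ge 1$ the coefficient of $e_1 \wedge \cdots \wedge e_p$ in $\Tr_{\langle\alph\rangle}^{\langle\alph,\beta\rangle}v = \sum_{i=0}^{p-1} \beta^i v$ equals its coefficient in $v$ alone: for $i \ne 0$ the monomial $\beta^{-i}(e_1 \wedge \cdots \wedge e_p)$ is supported on a row of $\{1,\ldots,p^2\}$ other than $\Delta$, hence lies in $M_0$, where $v$ has no component.

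The device for forcing $\Tr_{\langle\alph\rangle}^{\langle\alph,\beta\rangle} v_c$ to have zero coefficient of $e_1 \wedge \cdots \wedge e_p$ is to take $v_c$ (at an intermediate stage, a summand of it) to be a relative trace from the trivial subgroup of $\langle\alph\rangle$, say $v_c = \Tr_{\{1\}}^{\langle\alph\rangle} m$ with $m \in V_c$; this is automatically $\alph$-fixed, and by transitivity of relative traces $\Tr_{\langle\alph\rangle}^{\langle\alph,\beta\rangle} v_c = \Tr_{\{1\}}^{\langle\alph,\beta\rangle} m = \sum_{g \in \langle\alph,\beta\rangle} g m$. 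In this sum the coefficient of $e_1 \wedge \cdots \wedge e_p$ vanishes: the elements $g$ with $g(R) = \Delta$, for a fixed row $R$, form one coset of the set-stabiliser $\langle\alph\rangle$ of $\Delta$ and so number $p$, and they carry $e_R$ to $\pm e_1 \wedge \cdots \wedge e_p$ with a common sign, because any two differ by an element of $\langle\alph\rangle$, which acts on $\Delta$ as a $p$-cycle and hence, since $p$ is odd, as an \emph{even} permutation; so each row contributes a multiple of $p$. Thus the induction proceeds as long as the residual $w$ can, modulo $V_{c+1}$, be represented by such a relative trace — for which one must control the Tate cohomology of the $\langle\alph\rangle$-module $V_c/V_{c+1}$, or, what amounts to the same, locate the obstruction inside the span of the monomials on the rows $\ne \Delta$, which are exactly the $\alph$-fixed vectors of $M_0$ that are not relative traces.

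Consequently the crux — and the place where $p$ odd is genuinely used a second time — is the assertion that $\sum_{r=1}^{p}$ (coefficient in $u$ of the monomial on the $r$-th row of $\{1, \ldots, p^2\}$) vanishes for every $u \in V^{\langle\alph\rangle}$; by the same manipulation of $\sum_{i=0}^{p-1} \beta^i u$ as in the preliminary remark above, now retaining all $p$ summands (the first row being $\Delta$), this sum is precisely the coefficient of $e_1 \wedge \cdots \wedge e_p$ in $\Tr_{\langle\alph\rangle}^{\langle\alph,\beta\rangle} u$, and it is what makes both the base case $c = 0$ and the terminal case $c = p-1$ work. To prove it one combines $\delta(u) = 0$ — projected onto the span of the monomials supported entirely on $\Delta$, which pins down the $M_{p-1}$-component of $u$ in terms of its coefficient of $e_1 \wedge \cdots \wedge e_p$ — with the $\alph$-invariance of that component, the reordering signs coming from the $p$-cycles in $\alph$ (odd $p$ again) forcing the coefficient to be zero. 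I expect this vanishing, rather than the inductive machinery built around it, to be the main obstacle.
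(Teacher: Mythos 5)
Your overall architecture matches the paper's: induction on $c$ along the filtration $V_0 \supseteq V_1 \supseteq \cdots \supseteq V_p = \{0\}$, with the new summand built from relative traces $\Tr_1^{\langle \alpha \rangle}$ so that $\Tr_{\langle\alpha\rangle}^{\langle\alpha,\beta\rangle} v_c = \Tr_1^{\langle\alpha,\beta\rangle}(\,\cdot\,)$ has vanishing coefficient of $e_1\wedge\cdots\wedge e_p$; your coset-and-sign argument for that vanishing (each row contributes $\pm p$ times a coefficient, the common sign coming from the evenness of $p$-cycles) is correct and is the same cancellation exploited in Lemmas~\ref{lemma:zm}(iii) and~\ref{lemma:wj}(iii). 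Your reformulation of the crux --- that $\sum_{r=1}^{p}(\text{coefficient in } u \text{ of the } r\text{th row monomial})$ vanishes for every $u \in V^{\langle\alpha\rangle}$ --- is also correct, and is exactly what the $c=p-1$ case delivers.

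The gap is in your proposed proof of that crux. Projecting $\delta(u)=0$ onto $\bigwedge^{p-1}\langle e_j : j \in \Delta\rangle$ involves only the components of $u$ in $M_{p-1}\oplus M_p$ (a monomial $e_{\mathbf{i}}$ contributes a summand supported on $\Delta$ only when $\mathbf{i}\in I_{p-1}\cup I_p$), so together with $\alpha$-invariance it can at most constrain the coefficient of $e_\Delta$; it is blind to the coefficients of the monomials $e_{(m-1)p+1}\wedge\cdots\wedge e_{mp}$ for $2\le m\le p$, which lie in $M_0$ and are the other $p-1$ terms of the sum you must control. Worse, the conclusion this argument appears to aim at --- that the coefficient of $e_1\wedge\cdots\wedge e_p$ in $u$ itself is zero --- is false: the element $z_m$ of Lemma~\ref{lemma:zm} lies in $V^{\langle\alpha\rangle}$ and has that coefficient equal to $(-1)^p=-1$ (with coefficient $+1$ on row $m$; only the sum over all rows vanishes). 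The true statement genuinely couples the bottom layer $M_0$ to the top layer $M_p$, and no single application of $\delta(u)=0$ does this; one must propagate the information through all $p$ layers, which is what the explicit elements $z_m$ and $w(\mathbf{j})$, the Rewriting Lemma~\ref{lemma:rewrite}, and the bilinear-form bookkeeping in the paper's inductive step accomplish. (A secondary omission: for $1\le c\le p-1$ you must realise the $M_c$-component of the residual as the leading term of a trace of an element of $V$, not merely of $M_c$; this is exactly the role of $w(\mathbf{j})=\Tr_1^{\langle\alpha\rangle}\delta(e_1\wedge e_{\mathbf{j}})$.)
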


We prove Proposition~\ref{prop:subspace_induction} by induction on $c$, using
the following two lemmas.

\begin{lemma}\label{lemma:zm}
Fix $2 \le m \le p$.
For each $X \subseteq \{1,\ldots, p\}$ we define
a $p$-tuple $\mathbf{i}(X)$ with elements in $\{1,\dots,p\} \cup \{(m-1)p+1,\ldots,(m-1)p+p\}$ by
\[ \mathbf{i}(X)_a = \begin{cases} a & \text{if $a \in X$,} \\
(m-1)p + a  & \text{if $a \not\in X$.} \end{cases}
\]
Define
\[ z_m = \sum_{X \subseteq \{1, \ldots, p\}} (-1)^{|X|} e_{\mathbf{i}(X)}. \]
Then
\begin{enumerate}
\item [(i)] $z_m \in V^{\langle \alph \rangle}$,
\item [(ii)] $z_m \in e_{(m-1)p+1} \wedge \cdots \wedge e_{(m-1)p + p}
+ M_1 \oplus \cdots \oplus M_p$,
\item [(iii)] the coefficient of $e_{1} \wedge e_{2} \wedge \cdots \wedge e_{p}$ in
$\Tr_{\langle \alph \rangle}^{\langle \alph, \beta \rangle} z_m$ is zero.
\end{enumerate}
\end{lemma}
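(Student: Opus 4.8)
The plan is to recognise $z_m$ as a single factored wedge product and to read off all three parts from there. The first step is to check the identity
\[ z_m = \bigl(e_{(m-1)p+1} - e_1\bigr) \wedge \bigl(e_{(m-1)p+2} - e_2\bigr) \wedge \cdots \wedge \bigl(e_{(m-1)p+p} - e_p\bigr). \]
Expanding the right-hand side with the factors kept in the order $a = 1, \ldots, p$, and choosing from the $a$-th factor the summand $-e_a$ precisely for $a$ in a subset $X \subseteq \{1,\ldots,p\}$ and the summand $e_{(m-1)p+a}$ for $a \notin X$, produces exactly $(-1)^{|X|} e_{\mathbf{i}(X)}$ (a signed basis monomial, since $m \ge 2$ makes the $p$ indices involved pairwise distinct); summing over $X$ recovers the definition of $z_m$.

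Granting the factorisation, (i) and (ii) are immediate. Each factor $e_{(m-1)p+a} - e_a = (e_{(m-1)p+a} - e_1) - (e_a - e_1)$ lies in $S^{(p^2-1,1)}$, so $z_m \in \bigwedge^p S^{(p^2-1,1)} = V$; and since $\alpha$ is the product of the $p$-cycles $(1\,2\,\cdots\,p)$, $((m-1)p+1\,\cdots\,(m-1)p+p),\ldots$, it carries the $a$-th factor to the $(a+1)$-th (indices taken cyclically in $\{1,\ldots,p\}$), hence permutes the $p$ factors by a single $p$-cycle, which is an even permutation because $p$ is odd; therefore $\alpha z_m = z_m$, giving (i). For (ii), the summand indexed by $X$ has exactly the entries of $X$ lying in $\Delta$ and so lies in $M_{|X|}$; the only summand with $|X| = 0$ is that for $X = \emptyset$, namely $e_{(m-1)p+1} \wedge \cdots \wedge e_{(m-1)p+p}$, so $z_m$ minus this monomial lies in $M_1 \oplus \cdots \oplus M_p$.

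Part (iii) is the substantive point. As $\alpha$ and $\beta$ commute and generate an elementary abelian group of order $p^2$ in which $\langle\alpha\rangle$ has index $p$, we have $\Tr_{\langle\alpha\rangle}^{\langle\alpha,\beta\rangle} z_m = \sum_{j=0}^{p-1} \beta^j z_m$. Writing each point of $\{1,\ldots,p^2\}$ uniquely as $rp+a$ with $0 \le r \le p-1$ and $1 \le a \le p$, the permutation $\beta$ sends $rp+a$ to $(r+1)p+a$ with $r$ read modulo $p$, whence
\[ \beta^j z_m = \bigl(e_{(m-1+j)p+1} - e_{jp+1}\bigr) \wedge \cdots \wedge \bigl(e_{(m-1+j)p+p} - e_{jp+p}\bigr), \]
with $m-1+j$ and $j$ reduced modulo $p$. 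A monomial in the expansion of $\beta^j z_m$ is obtained by choosing from the $a$-th factor one of its two basis vectors, whose index is a multiple of $p$ plus $a$ in either case; hence $e_1 \wedge \cdots \wedge e_p$ can arise only if for every $a$ one of those two indices equals $a$, that is, only if $j \equiv 0$ or $m-1+j \equiv 0 \pmod p$. Since $2 \le m \le p$ forces $m-1 \not\equiv 0 \pmod p$, these two congruences never hold together, and in the range $0 \le j \le p-1$ they are solved by the distinct values $j = 0$ and $j = p-m+1$. For $j = 0$ one must take $-e_a$ from every factor, contributing $(-1)^p = -1$; for $j = p-m+1$ one must take $+e_a$ from every factor, contributing $(+1)^p = 1$. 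These cancel, so the coefficient of $e_1 \wedge \cdots \wedge e_p$ in $\Tr_{\langle\alpha\rangle}^{\langle\alpha,\beta\rangle} z_m$ is zero.

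The one genuine obstacle is the sign-and-index bookkeeping in (iii): seeing that exactly two of the $p$ translates $\beta^j z_m$ contribute the monomial $e_1 \wedge \cdots \wedge e_p$, and pinning their contributions down as $-1$ and $+1$. Everything else is routine once the factored form of $z_m$ is in hand; the hypothesis that $p$ is odd enters in (i) through the sign of a $p$-cycle and in (iii) by keeping the two critical values of $j$ distinct.
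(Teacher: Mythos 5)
Your proof is correct. The factorisation
$z_m = \bigwedge_{a=1}^{p}\bigl(e_{(m-1)p+a}-e_a\bigr)$
is easily checked exactly as you describe, and all three parts do follow from it. The paper proves (i) differently: it shows $\delta(z_m)=0$ directly by pairing the monomial summands of $\delta(z_m)$ via the involution $(X,a)\mapsto(X\symdiff\{a\},a)$ and invoking Proposition~\ref{prop:longExact}, whereas you get $z_m\in V=\bigwedge^pS^{(p^2-1,1)}$ for free from the fact that each factor lies in $S^{(p^2-1,1)}$; your route is cleaner, at the cost of having to note (as you do) that $\alpha$ cyclically permutes the $p$ factors and that a $p$-cycle is even for $p$ odd --- a point the paper's one-line claim $\alpha e_{\mathbf{i}(X)}=e_{\mathbf{i}(\alpha X)}$ also silently relies on, since $\alpha$ reorders the wedge factors cyclically. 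For (iii) the two arguments are the same cancellation viewed from opposite ends: the paper identifies the two monomial summands of $z_m$ (those for $X=\emptyset$ and $X=\{1,\ldots,p\}$, with coefficients $+1$ and $-1$) whose $\langle\alpha,\beta\rangle$-orbits meet $e_1\wedge\cdots\wedge e_p$, while you identify the two translates $\beta^0z_m$ and $\beta^{p-m+1}z_m$ that contribute; your observation that $m-1\not\equiv0\bmod p$ keeps these two values of $j$ distinct is the same use of $2\le m\le p$ implicit in the paper's argument. Both versions are complete.
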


\begin{proof}
The monomial summands in
\[ \delta(z_m) = \sum_{X \subseteq \{1, \ldots, p\}} (-1)^{|X|}
\sum_{a=1}^p (-1)^{a-1} \, e_{\mathbf{i}(X)_1} \wedge \ldots \wedge
\widehat{e_{\mathbf{i}(X)_a}} \wedge \ldots \wedge e_{\mathbf{i}(X)_p}\]
are indexed by pairs $(X, a)$ where $X \subseteq \{1,\ldots, p\}$
and $1 \le a \le p$. We define an involution $\star$ on the set of such pairs by
\[ (X,a)^\star = (X \symdiff \{a\},a). \]
where $\symdiff$ denotes symmetric difference. The summands for
$(X,a)$ and $(X,a)^\star$ cancel. Hence $\delta(z_m) = 0$ and $z_m \in V$.
Since $\alph e_{\mathbf{i}(X)} = e_{\mathbf{i} (\alph X)}$ and obviously $|\alph X|=|X|$,
we have $z_m \in V^{\langle \alph \rangle}$.
The unique monomial summand of~$z_m$ in $M_0$ is $e_{\mathbf{i}(\emptyset)}
= e_{(m-1)p+1} \wedge \cdots \wedge e_{(m-1)p + p}$, so we have (ii). The two
summands of~$z_m$ that contribute to the coefficient of
$e_{1} \wedge e_2 \wedge \cdots \wedge e_{p}$ in
$\Tr_{\langle \alph \rangle}^{\langle \alph, \beta \rangle}z_m$
are $e_{\mathbf{i}(\emptyset)}$, which appears with coefficient $+1$,
and $e_{\mathbf{i}(\{1,2,\ldots,p\})} = e_1 \wedge e_2 \wedge \cdots \wedge e_p$, which appears with coefficient $-1$.
Their contributions cancel, hence (iii).
\end{proof}

\begin{lemma}\label{lemma:wj}
Let $0\le c \le p-1$ and let $\mathbf{j} = (j_1,\ldots,j_p) \in J \cap I_c$.
Define
\[ w(\mathbf{j}) = \sum_{\ell=0}^{p-1} \alph^\ell \delta(e_1 \wedge e_\mathbf{j}). \]
Then
\begin{enumerate}
\item [(i)] $w(\mathbf{j}) \in V_c^{\langle \alph \rangle}$,
\item [(ii)] $w(\mathbf{j}) \in \sum_{\ell=0}^{p-1} \alph^\ell \bigl( \delta(e_1 \wedge e_\mathbf{d}) \wedge
e_\mathbf{k} \bigr) + M_{c+1}$,
where $\mathbf{d} = (j_1, \ldots, j_c)$ and $\mathbf{k} = (j_{c+1}, \ldots, j_p)$.
\item [(iii)] the coefficient of $e_{1} \wedge e_{2} \wedge \cdots \wedge e_{p}$ in
$\Tr_{\langle \alph \rangle}^{\langle \alph, \beta \rangle} w(\mathbf{j})$
is zero.

\end{enumerate}
\end{lemma}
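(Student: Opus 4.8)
The whole argument should be built on the single observation that
$w(\mathbf{j}) = \Tr_1^{\langle\alpha\rangle}\bigl(\delta(e_1\wedge e_\mathbf{j})\bigr)$ — the sum over $\ell=0,\dots,p-1$ in the definition of $w(\mathbf{j})$ is exactly the relative trace from the trivial subgroup — together with two elementary facts already available: $\delta$ is $\S_{p^2}$-equivariant, and each $M_{c'}$ is $\langle\alpha\rangle$-invariant (since $\Delta$ is a union of $\alpha$-orbits). For a $p$-subset $S\subseteq\{1,\dots,p^2\}$ write $e_S$ for the corresponding monomial basis element, and $e_\Delta=e_1\wedge\cdots\wedge e_p$. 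For \emph{(i)}: by \eqref{eq:delta}, $\delta(e_1\wedge e_\mathbf{j})=(e_{j_1}-e_1)\wedge\cdots\wedge(e_{j_p}-e_1)\in V$, so $w(\mathbf{j})\in V$, and $w(\mathbf{j})$ is $\alpha$-fixed because $\alpha^p=1$. Since $\mathbf{j}\in J\cap I_c$, the set $\{1,j_1,\dots,j_p\}$ meets $\Delta$ in exactly the $c+1$ elements $1,j_1,\dots,j_c$ (here $j_1>1$ is used), so every monomial summand of $\delta(e_1\wedge e_\mathbf{j})$, obtained by deleting one index, meets $\Delta$ in $c$ or $c+1$ elements, i.e.\ $\delta(e_1\wedge e_\mathbf{j})\in M_c\oplus M_{c+1}$; applying the $M_{c'}$-preserving maps $\alpha^\ell$ and summing keeps us in $M_c\oplus\cdots\oplus M_p$, giving $w(\mathbf{j})\in V_c^{\langle\alpha\rangle}$.

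\textbf{Part (ii).} Write $e_1\wedge e_\mathbf{j}=(e_1\wedge e_\mathbf{d})\wedge e_\mathbf{k}$ and apply \eqref{eq:deltaProd} with $u=e_1\wedge e_\mathbf{d}$ of degree $c+1$:
\[ \delta(e_1\wedge e_\mathbf{j})=\delta(e_1\wedge e_\mathbf{d})\wedge e_\mathbf{k}+(-1)^{c+1}(e_1\wedge e_\mathbf{d})\wedge\delta(e_\mathbf{k}). \]
The second term lies in $M_{c+1}$, since $e_1\wedge e_\mathbf{d}$ involves the $c+1$ elements $1,j_1,\dots,j_c$ of $\Delta$ while $\delta(e_\mathbf{k})$ involves only elements of $\{j_{c+1},\dots,j_p\}$, none in $\Delta$. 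Now apply $\alpha^\ell$ (which preserves $M_{c+1}$) and sum over $\ell$; this is exactly the statement of (ii).

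\textbf{Part (iii).} By transitivity of the relative trace, $\Tr_{\langle\alpha\rangle}^{\langle\alpha,\beta\rangle}w(\mathbf{j})=\Tr_1^{\langle\alpha,\beta\rangle}\delta(e_1\wedge e_\mathbf{j})=\sum_{g\in\langle\alpha,\beta\rangle}g\cdot\delta(e_1\wedge e_\mathbf{j})$. I would compute the coefficient of $e_\Delta$ by grouping over the monomial summands $\pm e_S$ of $\delta(e_1\wedge e_\mathbf{j})$: since $g\cdot e_S=\pm e_{g(S)}$, such an $e_S$ can only feed $e_\Delta$ through those $g$ with $g(S)=\Delta$. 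Identifying $\{1,\dots,p^2\}$ with the $p\times p$ grid on which $\alpha$ and $\beta$ are the two cyclic shifts, $\alpha$ fixes $\Delta$ setwise, so the $\langle\alpha,\beta\rangle$-orbit of $\Delta$ is the set of the $p$ ``rows'' and $\Stab_{\langle\alpha,\beta\rangle}(\Delta)=\langle\alpha\rangle$. Hence $g(S)=\Delta$ is solvable precisely when $S$ is one of these rows, and the solution set is then a coset of $\langle\alpha\rangle$, of size $p$; a short check — using that $\alpha$ restricts to a $p$-cycle on each row, which is \emph{even} because $p$ is odd, and that $\beta$ carries rows to rows preserving the within-row order — shows that all $p$ of these group elements send $e_S$ to $e_\Delta$ with the same sign. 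Thus each such $S$ contributes $p$ times a sign, i.e.\ $0$ in $F$, while every other $S$ contributes $0$; so the coefficient of $e_\Delta$ in $\Tr_{\langle\alpha\rangle}^{\langle\alpha,\beta\rangle}w(\mathbf{j})$ vanishes.

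\textbf{Main obstacle.} The only delicate point is the sign bookkeeping in part (iii): one must confirm that the $p$ group elements carrying a given row to $\Delta$ act on the corresponding monomial with equal sign, so the contributions total a multiple of $p$. This is exactly where the oddness of $p$ enters, via the fact that a power of a $p$-cycle is an even permutation, and it is indispensable, since it is what forces the $p$ equal contributions to cancel modulo $p$.
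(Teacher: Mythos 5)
Your proof is correct. Parts (i) and (ii) follow essentially the paper's route: the paper identifies directly which monomial summands of $\alpha^\ell\delta(e_1\wedge e_{\mathbf{j}})$ lie in $M_c$ and which in $M_{c+1}$, whereas you package the same observation via the Leibniz rule \eqref{eq:deltaProd}; this is a cosmetic difference. Part (iii) is where you genuinely diverge. The paper argues by cases on $c$: for $1\le c\le p-2$ every summand lies in some $M_b$ with $1\le b\le p-1$ and hence cannot reach $e_1\wedge\cdots\wedge e_p$ under any coset representative $\beta^k$; the two boundary cases $c=0$ and $c=p-1$ are then handled by showing the potentially dangerous $M_0$- (resp.\ $M_p$-) component of $w(\mathbf{j})$ actually vanishes, using $\alpha e_{\mathbf{j}}=e_{\mathbf{j}}$ (resp.\ a Leibniz-rule computation). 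You instead use transitivity of the relative trace to rewrite $\Tr_{\langle\alpha\rangle}^{\langle\alpha,\beta\rangle}w(\mathbf{j})$ as the full group sum $\sum_{g\in\langle\alpha,\beta\rangle}g\,\delta(e_1\wedge e_{\mathbf{j}})$, and then observe that for each monomial summand $\pm e_S$ the set $\{g:g(S)=\Delta\}$ is either empty ($S$ not a row) or a coset of $\Stab_{\langle\alpha,\beta\rangle}(\Delta)=\langle\alpha\rangle$ of size $p$ on which all elements act with the same sign (here the oddness of $p$ enters, exactly as it does in the paper's $c=0$ case), so every contribution is a multiple of $p$ and vanishes. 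Your version is uniform in $c$ and avoids the boundary-case analysis, at the price of invoking trace transitivity and a stabilizer computation; the paper's version keeps everything at the level of the filtration $M_0\oplus\cdots\oplus M_p$, which is the language the subsequent Proposition~\ref{prop:subspace_induction} works in. Both are complete proofs.
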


\begin{proof} Clearly $w(\mbf{j})\in V^{\langle \alpha\rangle}$.
The monomial summands of
\[ \alph^\ell \delta(e_1 \wedge e_\mathbf{j}) = \delta(e_{1+\ell} \wedge e_{\alph^\ell (j_1)} \wedge
\cdots \wedge e_{\alpha^\ell (j_c)} \wedge e_{\alpha^{\ell} (j_{c+1})} \wedge \cdots
\wedge e_{\alph^\ell (j_p)}) \]
are $e_{\alph^\ell (j_1)} \wedge \cdots \wedge e_{\alph^\ell (j_p)}$ and
$e_{1+\ell} \wedge e_{\alph^\ell (j_1)} \wedge \cdots \wedge \widehat{e_{\alph^\ell (j_a)}} \wedge \cdots\wedge
e_{\alph^\ell (j_p)}$ for~$1 \le a \le p$. The first summand and the summands for $a$
such that~$1 \le a \le c$ are in $M_c$, and the rest are in $M_{c+1}$, hence (i) and (ii).
If $e_\mathbf{i} \in M_b$ where $1 \le b \le p-1$ then the coefficient
of $e_{1} \wedge \ldots \wedge e_{p}$ in
$\Tr_{\langle \alph \rangle}^{\langle \alpha, \beta \rangle} e_\mathbf{i}$ is zero.
This implies that (iii) holds when $1 \le c \le p-2$.

Suppose that $c=0$. Then the
only contributions to the coefficient of $e_{1} \wedge e_2 \wedge \cdots \wedge e_{p}$
in $\Tr_{\langle \alph \rangle}^{\langle \alph, \beta \rangle} w(\mathbf{j})$
can come from $\sum_{\ell=0}^{p-1} \alph^\ell (e_\mathbf{j})$
when $\{j_1,\ldots, j_p\} = \beta^k(\Delta)$ for some $k$.
But then
$\alph (e_\mathbf{j}) = e_\mathbf{j}$, and so $w(\mathbf{j}) \in M_1$.

Suppose that $c=p-1$. Then $\mathbf{j} = (2,3,\ldots,p,j_p)$ where
$j_p \in \{(m-1)p+1,\ldots, (m-1)p+p\}$ for some $m$ such that $2 \le m \le p$ and
\[ w(\mathbf{j}) = \delta(e_1 \wedge e_2 \wedge \cdots \wedge e_p \wedge (e_{(m-1)p+1}
+\cdots+ e_{(m-1)p+p})). \]
It now follows from~\eqref{eq:deltaProd} at the end of Section~\ref{sec:hook} that
$w(\mathbf{j}) = \delta(e_1 \wedge e_2 \wedge \cdots \wedge e_p) \wedge
(e_{(m-1)p+1} + \cdots + e_{(m-1)p+p}) \in M_{p-1}$. This completes the proof of~(iii).
\end{proof}

In the proof of Proposition~\ref{prop:subspace_induction} below we use
the bilinear form on $M$ defined on the monomial basis of $M$ by
\[ (e_\mathbf{i}, e_{\mathbf{i}'}) = \begin{cases}
  1 & \text{if $\mathbf{i} = \mathbf{i}'$,} \\
  0 & \text{if $\mathbf{i} \not= \mathbf{i}'$,}
  \end{cases} \]
where $\mathbf{i}, \mathbf{i}' \in I$.

\begin{proof}[Proof of Proposition~\ref{prop:subspace_induction}]
We work by induction on $c$. By induction we may assume that
$u \in V_c^{\langle \alph \rangle}$. For $\mathbf{i} \in I$ let
$\mu_\mathbf{i} = (u, e_\mathbf{i})$, so we have
\[ u = \sum_{\mathbf{i} \in I_c \cup \cdots \cup I_p} \mu_\mathbf{i} e_\mathbf{i}. \]
It suffices to find $v \in V_c^{\langle \alph \rangle}$
such that the coefficient of $e_1 \wedge e_{2} \wedge \cdots \wedge e_{p}$
in $\Tr_{\langle \alph \rangle}^{\langle \alph,\beta\rangle} v$
is zero and $u - v \in V_{c+1}$.

\subsubsection*{Case $c=0$}
Let
\[ v' = \sum_{m=2}^p \mu_{((m-1)p+1,\ldots,(m-1)p+p)} z_m \]
where $z_m$ is as defined in Lemma~\ref{lemma:zm}.
Note that the sets $\{(m-1)p+1,\ldots,(m-1)p+p\}$ where $2 \le m \le p$ are the singleton
orbits of $\langle \alph \rangle$ on the set of all $p$-subsets of
$\{1,\ldots, p^2\} \backslash \Delta$.
Let $B^{(1)}, \ldots, B^{(S)}$ be representatives for the
remaining orbits of size $p$, and let $\mathbf{j}^{(1)}, \ldots, \mathbf{j}^{(S)} \in J$ be
the corresponding multi-indices.
Let
\[ v'' =\sum_{s=1}^S \mu_{\mathbf{j}^{(s)}} w(\mathbf{j}^{(s)}) \]
where $w(\mathbf{j})$ is as defined in Lemma~\ref{lemma:wj}
and let $v = v' + v''$. It follows from
Lemmas~\ref{lemma:zm} and~\ref{lemma:wj} that $v \in V^{\langle \alpha\rangle}$
and that
the coefficient of $e_1 \wedge e_{2} \wedge \cdots \wedge e_{p}$
in $\Tr_{\langle \alph \rangle}^{\langle \alph,\beta\rangle} v$ is zero.

Let $\mathbf{j} \in J \cap I_0$. Suppose that $\{j_1,\ldots,j_p\}$ is not fixed by $\alpha$.
There exists a unique set $B^{(t)}$ and a unique $q$
such that $0\le q< p$ and $\alpha^q(\{j_1,\ldots,j_p\}) = B^{(t)}$.
Note that $\alpha^q e_\mathbf{j} = \pm e_{\mathbf{j}^{(t)}}$ where
the sign is determined by the permutation that
puts $\bigl(\alpha^q(j_1), \ldots, \alpha^q(j_p)\bigr)$
in increasing order. By Lemma~\ref{lemma:E_basis} there 
is a basis of $M^{\langle \alpha \rangle}$ in which $\Tr_1^{\langle \alpha \rangle} e_{\mathbf{j}}$
is the unique basis element involving $e_\mathbf{j}$. Since \hbox{$\alpha u = u$}, it
follows that
$(u, e_\mathbf{j}) = (u, \alpha^qe_\mathbf{j}) = (u, \pm e_{\mathbf{j}^{(t)}})$. We also have
$(v, e_{\mathbf{j}^{(t)}}) = (v, \alpha^{-q} e_{\mathbf{j}^{(t)}})
= \pm (v, e_\mathbf{j})$ where the sign is as before. By Lemmas~\ref{lemma:zm}(ii) and \ref{lemma:wj}(ii),
we have $(u, e_{\mathbf{j}^{(t)}}) = \mu_{\mathbf{j}^{(t)}} = (v, e_{\mathbf{j}^{(t)}})$. It follows that $(u,e_\mathbf{j}) = (v,e_\mathbf{j})$. Therefore $u-v \in V_1$.

\subsubsection*{Case $1 \le c \le p-1$.}
Let $X$ be the set of all $(p-c)$-subsets of $\{1, \ldots, p^2\} \backslash \Delta$.
Each orbit of $\langle \alph \rangle$ on $X$ has size $p$.
Let $B^{(1)}, \ldots, B^{(S)}$ be representatives for these orbits and
let $\mathbf{k}^{(1)}, \ldots, \mathbf{k}^{(S)} \in I^{(p-c)}$
be the corresponding multi-indices. Define
\[ D = \bigl\{(i_1,\ldots,i_c) \in I^{(c)}: i_1,\ldots,i_c \in
\Delta \backslash \{1\}\bigr\}.\] 
For each $\mathbf{k}^{(s)}$ define
\[ v_{\mathbf{k}^{(s)}} = \sum_{\mathbf{d} \in D} \mu_{\mathbf{d} : \mathbf{k}^{(s)}}
w(\mathbf{d} : \mathbf{k}^{(s)}) \]
where $\mathbf{d} : \mathbf{k}^{(s)} \in I$ is the multi-index obtained
by concatenating $\mathbf{d}$ and $\mathbf{k}^{(s)}$.
Let
\[ v = \sum_{s=1}^S v_{\mathbf{k}^{(s)}}. \]
It follows from parts (i) and (iii) of Lemma~\ref{lemma:wj}
that $v \in V_c^{\langle \alph \rangle}$ and that the coefficient of
$e_1 \wedge e_{2} \wedge \cdots \wedge e_{p}$ in $\Tr_{\langle \alph \rangle}^{\langle
\alph, \beta \rangle} v$ is zero. It remains to show that $u-v \in V_{c+1}$.

Let $\mathbf{i} \in I_c$. There exists a unique set $B^{(t)}$ and a unique $q$
such that $0\le q \le p-1$ and $\alpha^q( \{i_{c+1}, \ldots, i_p\} ) = B^{(t)}$.
Let $\mathbf{g} = (g_1,\ldots,g_c)$
be the multi-index corresponding to the
set $\alpha^q (\{i_1,\ldots,i_c\})\subseteq \Delta$.
We have $\alpha^q e_\mathbf{i} = \pm e_{\mathbf{g} : \mathbf{k}^{(t)}}$, where the
sign is determined by the permutation putting $\bigl( \alpha^q(i_1), \ldots,
\alpha^q(i_p) \bigr)$ into increasing order.
As in the previous case we have $(u, e_\mathbf{i}) = (u, \alpha^q e_\mathbf{i})
= \pm (u, e_{\mathbf{g} : \mathbf{k}^{(t)}})$ and
$(v, e_{\mathbf{g} : \mathbf{k}^{(t)}}) = (v, \alpha^{-q} e_{\mathbf{g} : \mathbf{k}^{(t)}})
= \pm (v, e_{\mathbf{i}})$ where the signs agree.
Therefore  $(u, e_\mathbf{i}) = (v, e_\mathbf{i})$,
if and only if $(u, e_{\mathbf{g} : \mathbf{k}^{(t)}}) = (v, e_{\mathbf{g} : \mathbf{k}^{(t)}})$.
By Lemma~\ref{lemma:wj}(ii) we have
\begin{align} (v, e_{\mathbf{g} : \mathbf{k}^{(t)}})
&= \sum_{\mathbf{d} \in D}
\mu_{\mathbf{d} : \mathbf{k}^{(t)}} \nonumber
\left ( \, 
 \sum_{\ell=0}^{p-1} \alpha^{\ell} \bigl( \delta(e_1 \wedge e_\mathbf{d}) \wedge e_{\mathbf{k}^{(t)}}
 \bigr), e_{\mathbf{g} : \mathbf{k}^{(t)}} \right ) \\
&= \sum_{\mathbf{d} \in D}
\mu_{\mathbf{d} : \mathbf{k}^{(t)}} \nonumber
(\delta(e_1 \wedge e_\mathbf{d}) \wedge e_{\mathbf{k}^{(t)}}, e_{\mathbf{g} : \mathbf{k}^{(t)}}) \\
 &= \sum_{\mathbf{d} \in D} \mu_{\mathbf{d} : \mathbf{k}^{(t)}}
(\delta(e_1 \wedge e_\mathbf{d}) , e_\mathbf{g} ). \label{eq:veg}
\end{align}
On the other hand, by~\eqref{eq:std} in Section~\ref{sec:hook}
and Lemma~\ref{lemma:rewrite} we have
\[ u = \sum_{\mathbf{d} \in D} \mu_{\mathbf{d} : \mathbf{k}^{(t)}}
\delta(e_1 \wedge e_\mathbf{d} \wedge e_{\mathbf{k}^{(t)}}) + u' \]
where $u'$ is an $F$-linear combination of elements
$\delta(e_1 \wedge e_\mathbf{j})$ of the standard basis for
$\mathbf{j} \in J'$, where
\[ J' = \bigl\{ \mathbf{j} \in (I_{c} \cup \cdots \cup I_p) \cap J :
\text{$\mathbf{j} \not= \mathbf{d} : \mathbf{k}^{(t)}$ for any
$\mathbf{d} \in D$} \bigr\}. \]
Note that
$\bigl(e_{\mathbf{g} : \mathbf{k}^{(t)}}, \delta(e_1 \wedge e_\mathbf{j})\bigr)$
is zero for all $\mathbf{j} \in J'$.
(This is clear if $g_1 \not=1$, since
then
$\bigl(e_{\mathbf{g} : \mathbf{k}^{(t)}}, \delta(e_1 \wedge e_\mathbf{j})\bigr)$
= $(e_{\mathbf{g} : \mathbf{k}^{(t)}}, e_\mathbf{j}) = 0$.
If $g_1 = 1$ then $\mathbf{k}^{(t)}$ must be a subsequence of 
$\mathbf{j}$; since $\mathbf{j} \in I_c \cup \cdots \cup I_p$, it follows
that $\mathbf{j} \in I_c$, all the entries of $\mathbf{j}$ not in $\mathbf{k}^{(t)}$ lie in $\Delta$
and so  $\mathbf{j} = \mathbf{d} : \mathbf{k}^{(t)}$
where $\mathbf{d} \in D$. 
 But then $\mathbf{j} \not\in J'$.)
Hence $(e_{\mathbf{g} : \mbf{k}^{(t)}} , u') = 0$.
It follows from~\eqref{eq:deltaProd} at the end of Section~\ref{sec:hook} that
\[ u \in \sum_{\mathbf{d} \in D} \mu_{\mathbf{d} : \mathbf{k}^{(t)}}
\delta(e_1 \wedge e_\mathbf{d}) \wedge e_{\mathbf{k}^{(t)}} + u' + M_{c+1}\]
and so
\[ (u, e_{\mathbf{g} : \mathbf{k}^{(t)}}) =
\sum_{\mathbf{d} \in D}
 \mu_{\mathbf{d} : \mathbf{k}^{(t)}} (\delta(e_1 \wedge e_\mathbf{d}), e_\mathbf{g}).
\]
Comparing this with~\eqref{eq:veg} we get
$(v, e_\mathbf{g : k^{(t)}}) = (u, e_\mathbf{g : k^{(t)}}) $, as required. Hence $u-v\in V_{c+1}$.
\end{proof}

The proof of Proposition~\ref{prop:small_elem} is now complete.

\section{Open problems}
\label{sec:problems}

We end with some open problems suggested by our three main theorems.

\subsection*{Sylow subgroups}
Say that a group $G$ is $p$-\emph{elementarily large} if it has the property in Theorem~\ref{thm:char}
that if $Q$ is a $p$-subgroup of $G$ containing a $G$-conjugate of every elementary abelian
$p$-subgroup of $G$ then $Q$ is a Sylow $p$-subgroup of $G$. As remarked in the introduction,
not every group is $p$-elementarily large. For example, if $G$ has a
quaternionic Sylow $2$-subgroup then
any non-trivial $2$-subgroup of $G$ contains
the unique elementary abelian $2$-subgroup of $G$ up to $G$-conjugacy. An abelian
group is $p$-elementarily large if and only if its $p$-Sylow subgroup is elementary abelian.

\begin{problem}
Let $p$ be a prime.
Find sufficient conditions for a finite group $G$ to be $p$-elementarily large. In
particular, which finite simple groups are $p$-elementarily large?
\end{problem}

\subsection*{Vertices} 

By Theorem~\ref{thm:vertex}, the following
conjecture holds when $r=p$, $k\equiv1$ mod $p$ and $k \not\equiv 1$ mod $p^2$.

\begin{conjecture}\label{conj:vertex} Let $p$ be an odd prime.
The hook Specht module $S^{(kp-r,1^r)}$
has a Sylow $p$-subgroup of $\S_{kp}$ as a vertex.
\end{conjecture}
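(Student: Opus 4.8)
We sketch how one might attack Conjecture~\ref{conj:vertex}, following the structure of the proof of Theorem~\ref{thm:vertex}. The plan is to use Theorem~\ref{thm:char} to reduce the conjecture to a statement about elementary abelian $p$-subgroups, and then to prove that statement by the two methods behind Theorem~\ref{thm:elem}: generic Jordan type and the Brauer correspondence. Write $kp = m_1 p + \cdots + m_t p^t$ with $m_i \in \N_0$ and $m_t \neq 0$. By Theorem~\ref{thm:char} and Proposition~\ref{prop:max} it is enough to show that every $E(m_1,\ldots,m_t)$ lies in a vertex of $S^{(kp-r,1^r)} \cong \bigwedge^r S^{(kp-1,1)}$. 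The case $k=1$ is immediate since $\binom{p-1}{r}$ is coprime to $p$, so assume $k \ge 2$; then, up to conjugacy, there are two kinds of maximal elementary abelian $p$-subgroup to treat: the subgroup $\E(k) = E(k,0,\ldots,0)$ of $k$ disjoint $p$-cycles (the case $t=1$), and the subgroups $E(m_1,\ldots,m_t)$ with $t\ge 2$, which have an orbit of size $p^t\ge p^2$. So the target is a form of Theorem~\ref{thm:elem} valid for all $r$, with hypothesis~(ii) broadened to allow an orbit of size $p^j$ for any $j\ge 2$.

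For $\E(k)$ no new work is needed: the second author computed the generic Jordan type of $S^{(kp-r,1^r)}\res_{\E(k)}$ in \cite[Corollary~4.2, Theorem~4.5]{LimComplexity}, and as it is never generically free, Proposition~\ref{prop:sgJt}(iv) places $\E(k)$ in a vertex. (When $r\le p$ one could instead rerun the argument of Section~\ref{sec:proofelemi} verbatim, using $k \ge 2$.)

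For a subgroup $E$ with an orbit of size $p^j$, $j\ge 2$, one would mirror Section~\ref{sec:proofelemii}. After conjugation $E$ embeds in $Q = A\times T$, where $A\cong(\Z/p)^j$ acts regularly on a block $\Omega$ of size $p^j$ and $T$ is a Sylow $p$-subgroup of the symmetric group on the remaining $kp-p^j$ points; one uses that an abelian subgroup of a direct product embeds in the product of its projections, and that a transitive elementary abelian $p$-subgroup of $\S_{p^j}$ is the regular copy of $(\Z/p)^j$. One then wants a multi-index $\mathbf{i}$ of length $r$ whose $p$-subset structure is concentrated on $\Omega$ (for $r=p$ this is $\{1,\ldots,p\}\subseteq\Omega$; for larger $r$ the remaining entries must be spread over $T$-orbits in a controlled way) and a vector $w\in W^Q$, where $W = S^{(kp-r,1^r)}$, whose monomial-basis expansion in $\bigwedge^r\EE$ has nonzero coefficient on $e_{\mathbf{i}}$; the task is then the analogue of Proposition~\ref{prop:small_elem}, that for every proper $R<Q$ no element of $\Tr_R^Q W^R$ carries $e_{\mathbf{i}}$ with a nonzero coefficient. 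As in the paper this splits into an orbit-reduction step (via Proposition~\ref{prop:monomial}(iv), which should pin down one maximal $R_0<Q$), a reduction to $A$ acting on $\Omega$ through the boundary map $\delta$ together with a splitting $\bigwedge^{\bullet}\EE = U\oplus U'$ as in Proposition~\ref{prop:subspace_reduction}, and a filtration argument in the style of Proposition~\ref{prop:subspace_induction}, in which $R_0$-fixed vectors are corrected layer by layer using explicit cancelling vectors generalising $z_m$ and $w(\mathbf{j})$ of Lemmas~\ref{lemma:zm} and~\ref{lemma:wj}. Once $W(Q)\neq 0$, Proposition~\ref{prop:Brauer} yields a vertex of $S^{(kp-r,1^r)}$ containing $Q$, hence the conjugate of $E$.

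The hard part will be making both methods work once $r$ exceeds $p$. The generic Jordan type argument of Section~\ref{sec:proofelemi} relies on $\bigwedge^i\EE\res_E$ being generically free for $1\le i\le r$, which is what collapses the complex of Proposition~\ref{prop:longExact} into a chain of short exact sequences each governed by Proposition~\ref{prop:sgJt}(ii); this breaks at $i=p$ as soon as $r\ge p$, and the stable generic Jordan type of the intermediate modules $\bigwedge^i S^{(kp-1,1)}\res_E$ is then no longer forced to alternate between $[1]$ and $[p-1]$ --- so one needs either to extend the computations of \cite{LimComplexity} from $\E(d)$ to all maximal $E(m_1,\ldots,m_t)$, or a homological invariant that survives the loss of generic freeness. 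On the Brauer side the corresponding difficulty is that for $r$ larger than $p^j$ (or for orbits larger than $p^2$) the distinguished multi-index no longer fits inside a single $Q$-orbit of $p$-subsets, so the bookkeeping of which monomials can occur in the relative trace --- the content of Lemmas~\ref{lemma:zm} and~\ref{lemma:wj} and of Proposition~\ref{prop:subspace_induction} --- becomes considerably heavier, and it is not clear beforehand that a $Q$-fixed vector with the required non-vanishing coefficient exists for every maximal $E$.
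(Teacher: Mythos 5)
The statement you are trying to prove is stated in the paper as an open conjecture, not a theorem: the authors prove it only for $r=p$ with $k\equiv 1$ bmod $p$ and $k\not\equiv 1$ bmod $p^2$ (Theorem~\ref{thm:vertex}), note that the cases $r<p$, and $r=p$ with $k\not\equiv 1$ bmod $p$, follow from Green's dimension criterion, and explicitly leave the rest open. Your submission is accordingly a research programme rather than a proof: the reduction via Theorem~\ref{thm:char} and Proposition~\ref{prop:max} to showing that every maximal $E(m_1,\ldots,m_t)$ lies in a vertex is correct, and the $\E(k)$ case is indeed settled by \cite{LimComplexity} together with Proposition~\ref{prop:sgJt}(iv) (this is Proposition 7.2 of the paper). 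But for the remaining maximal subgroups, those with an orbit of size $p^j$ with $j\ge 2$, you do not supply an argument, only a plan, and you yourself flag that the existence of the required $Q$-fixed vector is unclear.

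The gap is not merely that the details are missing: the paper's Section~\ref{sec:problems} exhibits computations showing that both of the methods you propose to extend provably fail at specific subgroups relevant to the conjecture. For $p=3$ and $E=E(0,0,1)\le\S_{27}$ (regular, one orbit of size $p^3$), {\sc Magma} computations give $\bigl(\bigwedge^3 S^{(26,1)}\bigr)(E)=0$ even though $S^{(24,1^3)}$ has full vertex; in this case your $Q=A\times T$ degenerates to $Q=E$ itself (the orbit is everything, so $T$ is trivial), so the analogue of Proposition~\ref{prop:small_elem} is simply false and Proposition~\ref{prop:Brauer} detects nothing. Dually, $S^{(9,1^3)}\res_{E(1,1)}$ is generically free, so Proposition~\ref{prop:sgJt}(iv) detects nothing for $E(1,1)\le\S_{12}$ (there the paper must fall back on the Brauer argument, which works only because the largest orbit has size exactly $p^2$). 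So for orbits of size $p^j$ with $j\ge 3$ neither tool is known to apply, and at least one of them demonstrably does not; any proof of the conjecture requires a genuinely new idea, which is precisely why the authors pose it as a conjecture.
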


We hope that this conjecture will motivate new methods for computing vertices.
The following two examples show some of the limitations of the main methods
in this paper. Take $p=3$. Let $E=E(0,0,1)$; thus $E$ is an elementary abelian $3$-subgroup of
$\S_{27}$ acting regularly on the set $\{1,\ldots,27\}$.
Since the dimension of $\bigwedge^3 S^{(26,1)}$ is $\binom{26}{3}$, which is coprime to $3$,
$\bigwedge^3 S^{(26,1)}$ 
has a Sylow $3$-subgroup of $\S_{27}$ as a vertex. However calculations using {\sc Magma} show that
$(\bigwedge^3 S^{(26,1)})(E) = 0$, so the Brauer correspondence
fails to detect that $E$ is contained in a vertex of $S^{(24,1^3)}$.
Similarly if $E = E(1,1) \le \S_{12}$ then
the $FE(1,1)$-module $S^{(9,1^3)}\res_{E(1,1)}$
is generically free, but by Theorem~\ref{thm:vertex}, $S^{(9,1^3)}$ has a
Sylow $3$-subgroup of $\S_{12}$ as a vertex.

\subsection*{Sources}
If $V$ is an $FG$-module with vertex $Q$ and $V$ is a direct
summand of $U\ind^G$ where $U$ is an indecomposable $FQ$-module, then $U$ is said to
be a \emph{source} of $V$;
the module $U$ is well-defined up to conjugacy by
$\mathrm{N}_G(Q)$. If $n$
is not divisible by $p$ then $S^{(n-1,1)}$ is a direct summand of
the permutation module $\langle e_1,\ldots, e_n\rangle$ and it easily
follows that $\bigwedge^r S^{(n-1,1)}$ has trivial source
 whenever it is indecomposable.
When $p=2$ and $n$ is even it follows from the result of Murphy and Peel mentioned
in the introduction that the source of
$S^{(n-r,1^r)}$ is $S^{(n-r,1^r)}\res_{P}$ where $P$ is a Sylow $2$-subgroup of $\S_n$.
The indirect methods used in this paper give no information about the source of
$S^{(kp-p,1^p)}$
when $p$ is odd.

\begin{problem}
Determine the source of $S^{(kp-p,1^p)}$ when $p$ is odd.
\end{problem}

\begin{problem}
Is there an indecomposable
Specht module with a vertex properly contained in the defect group
of its $p$-block that does not have trivial source?
\end{problem}

\def\cprime{$'$} \def\Dbar{\leavevmode\lower.6ex\hbox to 0pt{\hskip-.23ex
  \accent"16\hss}D} \def\cprime{$'$}
\providecommand{\bysame}{\leavevmode\hbox to3em{\hrulefill}\thinspace}
\providecommand{\MR}{\relax\ifhmode\unskip\space\fi MR }
\providecommand{\MRhref}[2]{%
  \href{http://www.ams.org/mathscinet-getitem?mr=#1}{#2}
}
\providecommand{\href}[2]{#2}
\renewcommand{\MR}[1]{\relax}

\end{document}